\newtheorem{theorem}{Theorem}[section]
\newtheorem{corollary}[theorem]{Corollary}
\newtheorem{lemma}[theorem]{Lemma}
\newtheorem{proposition}[theorem]{Proposition}
\newtheorem{definition}[theorem]{Definition}
\newtheorem{remark}[theorem]{Remark}
\newtheorem{question}[theorem]{Question}
\def\multiset#1#2{\ensuremath{\left(\kern-.3em\left(\genfrac{}{}{0pt}{}{#1}{#2}\right)\kern-.3em\right)}}
\title{RNA, Local moves on plane trees, and transpositions on tableaux}
\thanks{The authors acknowledge the National Science Foundation (DMS--1143716) and Smith College for their support of the Center for Women in Mathematics.  The third author was also partially supported by NSF grant DMS--1248171.}
\author{Laura Del Duca}
\address{Department of Mathematics and Statistics, Smith College, Northampton, MA 01063 U.S.A.}
\email{lauraseegerer@gmail.com}
\author{Jennifer Tripp}
\address{Department of Mathematics and Statistics, Smith College, Northampton, MA 01063 U.S.A.}
\email{jentripp@comcast.net}
\author{Julianna Tymoczko}
\address{Department of Mathematics and Statistics, Smith College, Northampton, MA 01063 U.S.A.}
\email{jtymoczko@smith.edu}
\author{Judy Wang}
\address{Department of Mathematics and Statistics, Smith College, Northampton, MA 01063 U.S.A.}
\email{jwang@alumnae.smith.edu}
\subjclass[2010]{92E10, 05A05, 05C40}
\keywords{Plane trees, RNA, Young tableaux, connected components, permutation}
\begin{document}

\begin{abstract}
We define a collection of functions $s_i$ on the set of plane trees (or standard Young tableaux).  The functions are adapted from transpositions in the representation theory of the symmetric group and almost form a group action.  They were  motivated by {\em local moves} in combinatorial biology, which are maps that represent a certain unfolding and refolding of RNA strands.  One main result of this study identifies a subset of local moves that we call $s_i$-local moves, and proves that $s_i$-local moves correspond to the maps $s_i$ acting on standard Young tableaux.  We also prove that the graph of $s_i$-local moves is a connected, graded poset with unique minimal and maximal elements.  We then extend this discussion to functions $s_i^C$ that mimic reflections in the Weyl group of type $C$.  The corresponding graph is no longer connected, but we prove it has two connected components, one of symmetric and the other of asymmetric plane trees.  We give open questions and possible biological interpretations.
\end{abstract}

\maketitle

\section{Introduction}

This paper analyzes a combinatorial question inspired by biology, specifically the mathematical structure of RNA.  RNA  has primary structure (a sequence of letters A, U, C, and G), secondary structure (a partial matching of the letters in the primary structure, indicating how the RNA strand has folded and bonded to itself), and a tertiary structure (how this folding occurs in 3-dimensional space).  All of these structures contribute to the function of the RNA strand in ways that are still being uncovered. While our mathematical model of RNA is motivated by biology, this paper focuses on the model's combinatorial properties rather its direct relationship to biology.

There are many combinatorial models for the secondary structure of RNA, including plane trees and standard Young tableaux of shape $(n,n)$.  We will compare two important operations on these combinatorial objects, one from biological applications and the other from representation theory.

The first operation is called a {\em local move}.  Defined by Condon, Heitsch, and Hoos (and in Definition \ref{definition: local moves}), local moves model unfolding an RNA strand and refolding it differently \cite{Hei}.  Heitsch described key combinatorial statistics of the graph whose vertices are plane trees on $n$ edges and whose edges are local moves; she also showed how this graph is related to other important graphs like an analogous graph whose vertices are noncrossing partitions \cite{Hei}.

The second operation comes from constructions of representations of the symmetric group $S_n$.  One classical construction of representations of $S_n$ uses Young diagrams, which are staircase-shaped collections of boxes.  The symmetric group acts naturally on the set of all fillings of a Young diagram with the integers $1,2,\ldots,n$ (without repeating numbers) just by permuting the numbers.  It turns out that this action on filled Young diagrams gives rise to irreducible representations of $S_n$ (see, e.g.  \cite{Ful97, Sag01} for more).  

We restrict our attention to ``standard" Young tableaux, which are fillings that increase along both rows and columns. These tableaux are known to index bases for the irreducible representations of $S_n$ as well as other quantities of combinatorial interest. It is therefore natural to ask whether the symmetric group can be modified to also act on standard Young tableaux. The answer is yes and no.  In Section \ref{section: maps on tableaux} we define a collection of maps that act on standard Young tableaux and agree as much as possible with the action of the simple transpositions $(i,i+1)$ on arbitrary fillings of Young diagrams.  More precisely, the map corresponding to the simple reflection $(i,i+1)$ simply exchanges $i$ and $i+1$ in the tableau when doing so makes sense.  The maps do not induce a group action of $S_n$ because composition of functions does not agree with multiplication in $S_n$.  Thus these maps cannot directly give information about $S_n$-representations.  However the maps are involutions, as we confirm in Proposition \ref{proposition: involutions}.  Moreover similar maps arise in other parts of combinatorial representation theory, including Vogan's generalized tau invariants \cite{Vog79, HRT}. 

We further restrict our study to the standard Young tableaux corresponding to the partition $(n,n)$.  This partition is an especially important one in applications from geometry \cite{Fun03} to knot theory \cite{Kho04}, as well as the biological applications discussed here.  In Theorem \ref{theorem: graph local moves} we prove that our maps actually correspond to certain local moves, whose defining conditions are shown in Figure~\ref{fig:diffrow}.  We call the local moves that arise in this way {\em{$s_i$-local moves.}} 

Note that not all local moves correspond to the action of permutations of the form $(i,i+1)$.  In particular the graph $G^A$ whose vertices are plane trees and whose edges correspond to $s_i$-local moves is different from the graph whose edges are {\em all} local moves.  The graph of {\em all} local moves is a connected graded poset for which the cardinalities of the ranks form a symmetric, unimodal sequence (see, e.g., \cite{Hei}).  Section \ref{section: graph in type A} proves that the graph $G^A$ is still a
\begin{itemize}
\item connected (Proposition \ref{proposition: connected})
\item graded poset (Proposition \ref{proposition: ranked poset})
\item with a unique minimal element and a unique maximal element (Proposition \ref{proposition: max/min elements}).
\end{itemize}
However the grading of the graph of $s_i$-local moves does not coincide with that of the graph of all local moves, nor does the graph of $s_i$-local moves satisfy symmetry of ranks that the graph of local moves does (see Remark \ref{remark: not unimodal}).

Our $s_i$-local moves were constructed by analogy with the symmetric group $S_n$.  Thus we finish by extending the analogy to Weyl groups of other classical types, which we can do by considering these groups as subgroups of $S_n$. Our main focus is Weyl groups of type $C$, which give rise to type $C$ local moves.  Like Heitsch for local moves, we find that the plane tree model is particularly natural for type $C$ local moves.  Indeed we prove in Corollary \ref{corollary: two components} that the graph $G^C$ of plane trees under type $C$ local moves contains exactly two connected components: one consisting of symmetric plane trees and the other consisting of asymmetric plane trees.

We conclude with a brief discussion of extending $s_i$-local moves to types $D$ and $B$, as well as possible biological interpretations of all the local moves we describe.  We give open questions throughout the manuscript.

Throughout this manuscript  $Y$ denotes standard Young tableaux and $T$ denotes plane trees.

\section{Maps on tableaux corresponding to simple transpositions}\label{section: maps on tableaux}

In this section we describe a set of involutions on the set of standard Young tableaux of shape $(n,n)$ that are indexed by simple reflections.  Our maps are inspired by a well-known $S_n$-action from classical representation theory that gives all irreducible representations of the symmetric group.  Our maps do not generate a group action, as we show in Remark \ref{remark: not group action}.  However because they are involutions, our maps induce a graph whose vertices are the set of standard Young tableaux of shape $(n,n)$ and whose edges correspond to the image under each map.  We define this graph in this section.  In subsequent sections we study combinatorial properties of the graph, prove that these maps agree with operations on plane trees from combinatorial biology, and discuss how to change the Lie type of our maps.

To begin we recall the definition of Young tableaux and sketch their relationship to the representation theory of the symmetric group $S_n$. 

\begin{definition}
Let $\lambda$ be a partition of $n$.  A Young diagram of shape $\lambda$ is a collection of $\lambda_1$ boxes in the top row, $\lambda_2$ boxes in the second row, and so on, aligned on the top and the left.  A standard Young tableau $Y$ of shape $\lambda$ is a filling of the Young diagram with the integers $\{1,2,\ldots, n\}$ without repetition so that each row increases left-to-right and each column increases top-to-bottom.
\end{definition}

The {\em Specht module} for a partition $\lambda$ is generated as a complex vector space by vectors $v_T$ indexed by standard tableaux $Y$ of shape $\lambda$.  The dimension of the irreducible representation of {{$S_n$}} corresponding to $\lambda$ is also the number of standard Young tableaux of shape $\lambda$.  A reasonable question arises: is there an action of {{$S_n$}} on standard Young tableaux under which the Young tableaux themselves can be the basis for the irreducible representation?  Sadly the answer is generally no: the vectors $v_Y$ in the Specht module are linear combinations of terms corresponding to different fillings of $\lambda$.   (See Fulton's text \cite{Ful97} or Sagan's text \cite{Sag01} for more.)  The problem is that {{$S_n$}} ``should" act by permuting the entries of $Y$ but permuting the entries of $Y$ usually doesn't produce another standard tableau.

In the following family of maps, we modify the permutation action on all fillings so that it produces standard tableaux.  We define the maps on standard Young tableaux for arbitrary partitions; in later sections we specialize to the case when $\lambda = (n,n)$ and the maps correspond to elements of $S_{2n}$.  

\begin{definition}\label{definition: group action}
Suppose that $Y$ is a standard Young tableau with $n$ boxes and $s_i = (i, i+1)$ where $i=1,\ldots,{n-1}$ is a simple reflection.  If $i, i+1$ are not in the same row or in the same column of $Y$ then define $s_i(Y)$ to be the tableau with $i$ and $i+1$ exchanged.  If $i, i+1$ are in the same row or in the same column of $Y$ then define $s_i(Y)$ to be $Y$.  Define an arbitrary word $s_{i_1}s_{i_2}\cdots s_{i_k}(Y)$ to be the tableau obtained by composition of maps.
\end{definition}

Others have considered an analogous action on 3-row tableaux \cite{HRT} that comes from Vogan's generalized tau invariant \cite{Vog79}.

The next result shows that these operations always give well-defined maps on standard tableaux (of arbitrary but fixed shape).

\begin{proposition}
For each $i=1,\ldots,n-1$ the map $s_i$ is well-defined and the image $s_i(Y)$ is a standard Young tableau of the same shape as $Y$.
\end{proposition}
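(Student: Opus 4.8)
The plan is to verify the two assertions—well-definedness and that $s_i(Y)$ is again a standard Young tableau of the same shape—by a short case analysis on the relative positions of the boxes containing $i$ and $i+1$ in $Y$. Well-definedness requires only checking that the two clauses of Definition~\ref{definition: group action} are exhaustive and mutually consistent: the clauses are indexed by whether $i$ and $i+1$ lie in a common row or column, and in the remaining case we swap them, so there is nothing to reconcile. Thus the substance of the proposition is the claim that, when $i$ and $i+1$ are in neither a common row nor a common column, exchanging them produces a standard tableau.

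First I would record the elementary observation that in a standard Young tableau the box containing $i+1$ is never strictly up-and-to-the-left (``northwest'') of the box containing $i$: if it were, then following the increasing path from the box of $i+1$ down its column and along its row we would reach the box of $i$, forcing $i+1 < i$. Hence, assuming $i$ and $i+1$ share neither a row nor a column, the box of $i+1$ is strictly southwest of the box of $i$ (strictly below and strictly to the left, in the French-vs-English sense appropriate to the paper's conventions—I would phrase it as: the row of $i+1$ is strictly below the row of $i$, and the column of $i+1$ is strictly to the left of the column of $i$). Let $a$ be the box of $i$ and $b$ the box of $i+1$, and let $Y'$ be obtained by placing $i$ in $b$ and $i+1$ in $a$. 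The shape is unchanged by construction, so it remains to check the row- and column-increasing conditions at the four boxes adjacent to $a$ and $b$.

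The key steps are then the four local checks. For the box $a$ (now containing $i+1$): its left neighbor in $Y$ held a value $< i$, which is $\le i-1 < i+1$, so still increasing; its right neighbor held a value $> i$, hence $\ge i+1$, but it cannot equal $i+1$ since $b \ne$ that neighbor (they are in different columns), so it is $\ge i+2 > i+1$; the column neighbors are handled symmetrically, using that $b$ is not in the column of $a$. For the box $b$ (now containing $i$): its right neighbor held a value $> i+1 \ge i$, so increasing; its left neighbor held a value $< i+1$, hence $\le i$, and it cannot equal $i$ since $a \ne$ that neighbor, so it is $\le i-1 < i$; again the column neighbors are symmetric. I would lay these out compactly, perhaps invoking the northwest observation once more to note that $a$ cannot be the immediate predecessor box of $b$ in its row or column (nor vice versa) precisely because $a$ is strictly northeast of $b$.

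I do not expect a genuine obstacle here; the only mild subtlety—and the point I would be careful to state cleanly—is the case distinction ensuring that the neighbor of $a$ or $b$ whose entry we are comparing is not the other of the two swapped boxes, which is exactly where the hypothesis ``not in the same row or column'' is used, together with the northwest observation to pin down that $b$ lies strictly to the southwest of $a$.
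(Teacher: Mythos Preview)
Your neighbor-check argument is the right idea and is essentially the paper's proof, which simply observes that for each of the two boxes containing $i$ and $i+1$, the entries to the left and above are less than $i$ while the entries to the right and below are greater than $i+1$, so swapping $i$ and $i+1$ preserves standardness.

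However, your intermediate claim that the box $b$ of $i+1$ must lie strictly southwest of the box $a$ of $i$ is false: in the $(2,2)$ tableau with rows $1\;3$ and $2\;4$, the box of $3$ lies strictly northeast of the box of $2$. Your northwest observation rules out only one diagonal direction; a symmetric argument (look at the box in the row of $i$ and the column of $i+1$, whose entry would have to be strictly between $i$ and $i+1$) also rules out southeast, but northeast and southwest both genuinely occur. Fortunately this claim is unnecessary: the neighbor checks require only that $a$ and $b$ share neither a row nor a column, which already guarantees that no row-neighbor of $a$ equals $b$ (since $b$ is in a different \emph{row}) and no column-neighbor of $a$ equals $b$ (since $b$ is in a different \emph{column}), and likewise for $b$. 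Note also that in your check of the right neighbor of $a$ you invoked ``different columns'' where ``different rows'' is the relevant hypothesis. Once that slip is corrected and the southwest claim is dropped, the argument matches the paper's.
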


\begin{proof}
By construction $s_i$ preserves the shape of $Y$.  By definition the boxes containing $i$ and $i+1$ inside the standard Young tableau $Y$ have numbers less than $i$ to the left and above and have numbers greater than $i+1$ to the right and below.  Hence if $s_i$ exchanges $i$ and $i+1$ then the result $s_i(Y)$ is also a standard Young tableau.
\end{proof}

Moreover these maps have a convenient property.

\begin{proposition} \label{proposition: involutions}
Definition \ref{definition: group action} produces a well-defined involution on the set of standard Young tableaux of shape $\lambda$.
\end{proposition}

\begin{proof}
We check that for all $i$ we have $s_i^2=e$ using two cases:
\begin{enumerate}
\item 
If $i$ and $i+1$ are in the same row then by definition $s_i(Y) = Y$ so the claim holds. 
\item 
If $i$ and $i+1$ are in different rows then $s_i$ swaps the positions of $i$ and $i+1$. Applying $s_i$ twice brings $i$ and $i+1$ back to their original positions.
\end{enumerate}
\end{proof}

This leads us to construct a graph whose vertices are standard Young tableaux of shape $\lambda$ and whose edges describe the maps $s_i$.  The edges are undirected precisely because the maps $s_i$ are involutions for each $i$.

\begin{definition} \label{definition: graph on tableaux}
Let $G_{\lambda}=(V,E)$ be the edge-labeled graph whose vertices $V$ are the set of standard Young tableaux of shape $\lambda$. An edge labeled $s_i$ connects tableaux $Y$ and $Y'$ when $s_i(Y)=Y'$.  We call $G_{\lambda}$ the graph of $s_i$-local moves for $\lambda$.
\end{definition}

As an example, Figure~\ref{fig:S2} shows the graph $G_{(2,2)}$ corresponding to the partition $(2, 2)$. 

\begin{figure}[H]
\begin{center}
\begin{tikzpicture}[align=center, anchor=base]
    \node[minimum size=8em] (a) at (0, 0) {
    \tiny
    \begin{tabular}{ | c | c |}
    \hline
    $1$ & $2$ \\ \hline
    $3$ & $4$ \\ \hline
    \end{tabular}
    };
    \node[minimum size=8em] (b) at (4, 0) {
    \tiny
    \begin{tabular}{ | c | c |}
    \hline
    $1$ & $3$ \\ \hline
    $2$ & $4$ \\ \hline
    \end{tabular}
    };
    
    \draw (-1, 0.25) edge[dashed, loop, distance=2cm, in=135, out=180] node (s1L) [left] {\tiny $s_{1}$} 
    (-1, 0.5); 
    \draw (-1, -0.25) edge[dashed, loop, distance=2cm, in=225, out=180] node (s3L) [left] {\tiny $s_{3}$} 
    (-1, -0.5); 
    
    \draw (1, 0) edge[-] node (s2) [above] {\tiny $s_{2}$} (3, 0);
    
    \draw (5, 0.25) edge[dashed, loop, distance=2cm, in=45, out=0] node (s1R) [right] {\tiny $s_{1}$} 
    (5, 0.5); 
    \draw (5, -0.25) edge[dashed, loop, distance=2cm, in=315, out=0] node (s3R) [right] {\tiny $s_{3}$} 
    (5, -0.5); 
\end{tikzpicture}
\end{center}
\caption{Graph of $s_{i} = (i, i + 1)$ on standard Young tableaux of shape $(2, 2)$ \label{fig:S2}}
\end{figure}
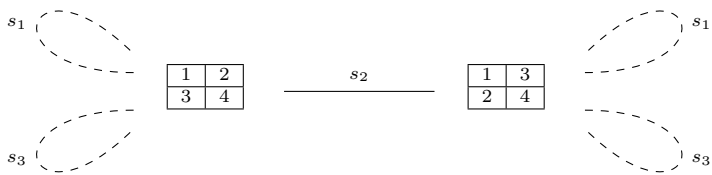

\begin{remark}\label{remark: not group action}
Note that the maps $s_i$ do not induce a group action of the symmetric group on the standard Young tableaux even for the shape $(n,n)$.  For a counterexample, inspect Figure~\ref{fig:S2}.  On the one hand $s_2s_3s_2(Y)=Y$ for each standard tableau $Y$ of shape $(2,2)$.  On the other hand $s_3s_2s_3(Y)$ is the opposite tableau of shape $(2,2)$.  Since $s_2s_3s_2=s_3s_2s_3$ in the symmetric group, we conclude that the maps $s_i$ do not define a group action.
\end{remark}

\begin{remark}
We typically omit all edges corresponding to fixed points $Y = s_i(Y)$ (represented in Figure~\ref{fig:S2} as dashed self-edges) from our drawings of $G_{\lambda}$. In later sections we restrict to the case $\lambda = (n,n)$ and so omit $\lambda$ from our notation.  We will also modify the maps $s_i$ that define the edges, so we often write $G^A$ to denote the graph with the precise edges in Definition \ref{definition: graph on tableaux} or write $G^C$ to denote the modified graph in Section \ref{section: type C}.
\end{remark}

\begin{question}
In subsequent sections we analyze the graph $G_{(n,n)}$.  What can be said about the graph $G_{\lambda}$ for arbitrary partitions?
\end{question}


\section{$S_n$-action and local moves on plane trees}

This section relates the functions defined in the previous section to an operation on plane trees called {\em local moves}.  Condon, Heitsch, and Hoos defined local moves to represent an unfolding-and-refolding process on a strand of RNA.  Heitsch then proved many combinatorial properties of a graph whose vertices are plane trees and whose edges come from local moves, for instance that the graph is symmetric and unimodal \cite{Hei}.  She also showed that under one natural modification to the edges, we obtain the graph whose vertices are noncrossing partitions and whose edges come from Kreweras complementation \cite{Hei}.

We extend these results in a different direction, showing that many local moves correspond naturally to the action of the maps $s_i$ on standard Young tableaux.  Since we specialize to Young diagrams of shape $(n,n)$ we also specialize to the permutations $S_{2n}$ in this section.

We begin by recalling the definition of plane trees and local moves.

\begin{definition} \label{definition: plane tree}
A plane tree is a rooted tree whose subtrees at any vertex are linearly ordered. 
\end{definition}

Our convention for a plane tree is that the root is at the top and that the subtrees are linearly ordered from left to right. In figures, the root is drawn with an open circle and ordinary vertices are drawn with solid circles.

Plane trees are related to Young diagrams, noncrossing matchings, and other fundamental combinatorial objects that are also counted by Catalan numbers.  To see this, we interpret each edge of a plane tree with $n$ edges as a pair of two half-edges, each of which is indexed with one of the integers from $1$ to $2n$.  The half-edges are labeled in increasing order counterclockwise from the root.  We write $e(i,j)$ to denote the edge whose left half-edge is labeled $i$ and whose right half-edge is labeled $j$.  Given this setup, the half-edges $i$ and $j$ in the edge $e(i,j)$ satisfy many constraints, including $i<j$.

The next definition describes {\em local moves}, which are operations on plane trees that are central to this paper. We denote the collection of plane trees with $n$ edges by $\mathcal{T}_n$.

\begin{definition} \label{definition: local moves}
A local move on a plane tree $T \in \mathcal{T}_n$ converts a pair of adjacent edges in one of two ways:
\begin{enumerate}
\item \label{type A local move} If $i<i'<j'<j$ then replace $e(i,j)$ and $e(i',j')$ with $e(i,i')$ and $e(j',j)$.  This is a local move {\em of type \eqref{type A local move}}:
\begin{figure}[H]
\begin{center}
$
\begin{aligned}
  \begin{tikzpicture}[level distance=1cm,
level 1/.style={sibling distance=2cm},
level 2/.style={sibling distance=1cm}]
\tikzstyle{every node}=[circle, draw, scale=.8, inner sep=2pt]
\vspace{10pt}
\node (Root)[fill] {}
    child {
    node[fill] {}
    child { node[fill] {} 
	edge from parent 
	node[left, draw=none] {\tiny $i'$}
	node[right,draw=none]  {\tiny $j'$}    
    }
	edge from parent 
	node[left, draw=none] {\tiny $i$}
	node[right, draw=none]  {\tiny $j$}
};
\end{tikzpicture}
\end{aligned}
\quad 
\begin{aligned}
\begin{tikzpicture}
  [scale = 1.5]
  \node (n1) at (0, 0) {};
  \node (n2) at (1, 0) {};
  \draw[->] (n1) -- (n2) node[midway, above]  {\tiny Type (1) local move};
\end{tikzpicture}
\end{aligned}
\quad
 \begin{aligned}
  \begin{tikzpicture}[sloped, level distance=1cm,
level 1/.style={sibling distance=2cm},
level 2/.style={sibling distance=1cm}]
\tikzstyle{every node}=[circle, draw, scale=.8, inner sep=2pt]
\vspace{10pt}
\node[inner sep=2pt] (Root)[fill] {}
    child {
    node[fill] {} 
    	edge from parent 
	node[ellipse, above, draw=none] {\tiny $i$}
	node[ellipse, below, draw=none]  {\tiny $i'$}  
}
child {
    node[fill] {}
    	edge from parent 
	node[ellipse, below, draw=none] {\tiny $j'$}
	node[ellipse, above, draw=none]  {\tiny $j$}  
};
\end{tikzpicture}
\end{aligned}
$
\end{center}
\end{figure}
\item \label{type B local move} If $i<j<i'<j'$ then replace $e(i,j)$ and $e(i',j')$ with $e(i,j')$ and $e(j,i')$.  This is a local move {\em of type \eqref{type B local move}}: 
\begin{figure}[H]
\begin{center}
$
\begin{aligned}
\begin{tikzpicture}[sloped, level distance=1cm,
level 1/.style={sibling distance=2cm},
level 2/.style={sibling distance=1cm}]
\tikzstyle{every node}=[circle, draw, scale=.8, inner sep=2pt]
\vspace{10pt}
\node[inner sep=2pt] (Root)[fill] {}
    child {
    node[fill] {} 
    	edge from parent 
	node[ellipse, above, draw=none] {\tiny $i$}
	node[ellipse, below, draw=none]  {\tiny $j$}  
}
child {
    node[fill] {}
    	edge from parent 
	node[ellipse, below, draw=none] {\tiny $i'$}
	node[ellipse, above, draw=none]  {\tiny $j'$}  
};
\end{tikzpicture}
\end{aligned}
\quad 
\begin{aligned}
\begin{tikzpicture}
  [scale = 1.5]
  \node (n1) at (0, 0) {};
  \node (n2) at (1, 0) {};
  \draw[->] (n1) -- (n2) node[midway, above]  {\tiny Type (2) local move};
\end{tikzpicture}
\end{aligned}
\quad
 \begin{aligned}
 \begin{tikzpicture}[level distance=1cm,
level 1/.style={sibling distance=2cm},
level 2/.style={sibling distance=1cm}]
\tikzstyle{every node}=[circle, draw, scale=.8, inner sep=2pt]
\vspace{10pt}
\node (Root)[fill] {}
    child {
    node[fill] {}
    child { node[fill] {} 
	edge from parent 
	node[left, draw=none] {\tiny $j$}
	node[right,draw=none]  {\tiny $i'$}    
    }
	edge from parent 
	node[left, draw=none] {\tiny $i$}
	node[right, draw=none]  {\tiny $j'$}
};
\end{tikzpicture}
 \end{aligned}
$
\end{center}
\end{figure}
\end{enumerate}

\end{definition}

The following map provides a natural bijection between plane trees with $n$ edges and standard Young tableaux of shape $(n,n)$.

\begin{definition}\label{definition: plane trees and tableaux}
Let $Y_{(n,n)}$ denote the set of standard Young tableaux of shape $(n,n)$.  Define a map $\phi: \mathcal{T}_n \rightarrow Y_{(n,n)}$ by the rule that for each $T \in \mathcal{T}_n$ the Young tableau $\phi(T)$ has the labels of the left-half-edges of $T$ on its top row and the labels of the right-half-edges of $T$ on its bottom row.
\end{definition}

The following proposition confirms that the map $\phi$ is bijective.  Both image and domain are sets that are known to index the Catalan numbers \cite[Chapter 6, Problem 19(e) and (ww)]{Sta99}; we include the following proof  to confirm that the specific map $\phi$ is a direct bijection.

\begin{proposition} \label{proposition: bijection}
The map $\phi: \mathcal{T}_n \rightarrow Y_{(n,n)}$ is a well-defined bijection.
\end{proposition}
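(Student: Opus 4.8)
The plan is to factor $\phi$ through the classical bijection between plane trees and Dyck words and then reduce the whole statement to a pair of prefix-counting (ballot) estimates. To set this up, I would first record the observation that associating to a plane tree $T\in\mathcal T_n$ the word $w(T)$ of length $2n$ in open and close parentheses whose $k$-th letter is an open parenthesis when $k$ is a left half-edge of $T$ and a close parenthesis when $k$ is a right half-edge produces a Dyck word. Indeed, traversing $T$ counterclockwise from the root, the left half-edge of each edge is read before its right half-edge, so $w(T)$ is balanced; and all half-edges belonging to the subtree hanging below an edge $e$ lie strictly between the two half-edges of $e$, so every prefix of $w(T)$ has at least as many open as close parentheses. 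Conversely a Dyck word rebuilds a unique plane tree by reading an open parenthesis as ``descend into a new rightmost child'' and a close parenthesis as ``return to the parent.'' This is the standard correspondence $\mathcal T_n\cong\{\text{Dyck words of length }2n\}$ (see, e.g., \cite{Sta99}), and under it $\phi(T)$ is exactly the two-row array whose top row lists the positions of the open parentheses of $w(T)$ in increasing order and whose bottom row lists the positions of the close parentheses in increasing order; that is, $\phi$ is the composite of $T\mapsto w(T)$ with the map $w\mapsto Y_w$ sending a Dyck word to that two-row array.

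It therefore suffices to prove that $w\mapsto Y_w$ is a bijection from Dyck words of length $2n$ onto $YT_{(n,n)}$. Well-definedness and injectivity are the easy directions: the positions of the open and of the close parentheses partition $\{1,\dots,2n\}$, each row of $Y_w$ is increasing by construction, and $w$ is recovered from $Y_w$ since letter $k$ is an open parenthesis exactly when $k$ lies in the top row, so it remains only to check the column-strict condition. Writing $a_1<\cdots<a_n$ for the top row of $Y_w$ and $b_1<\cdots<b_n$ for the bottom row, I claim $a_k<b_k$ for every $k$. If not, then $b_k<a_k$, so the length-$b_k$ prefix of $w$ contains the $k$ close parentheses in positions $b_1,\dots,b_k$ but at most the $k-1$ open parentheses in positions $a_1,\dots,a_{k-1}$, contradicting the Dyck condition. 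Hence $Y_w$ is a standard Young tableau of shape $(n,n)$.

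For surjectivity, take any $Y\in YT_{(n,n)}$ with rows $a_1<\cdots<a_n$ and $b_1<\cdots<b_n$; these two sets partition $\{1,\dots,2n\}$ and satisfy $a_k<b_k$ for all $k$ because $Y$ is column-strict. Let $w$ be the word of length $2n$ with open parentheses in positions $a_1,\dots,a_n$ and close parentheses in positions $b_1,\dots,b_n$. Given any prefix of length $m$ containing exactly $t$ close parentheses, those must be the ones in positions $b_1,\dots,b_t$, so $b_t\le m$; then $a_t<b_t\le m$, so the prefix contains at least the $t$ open parentheses in positions $a_1,\dots,a_t$. Thus every prefix has at least as many open as close parentheses, $w$ is a Dyck word, and $Y_w=Y$. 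This shows $w\mapsto Y_w$ is a bijection, and composing with the bijection $T\mapsto w(T)$ proves $\phi$ is a bijection. The only step that demands genuine care is the first paragraph — matching the paper's half-edge labeling conventions to the plane-tree/Dyck-word dictionary and verifying $\phi=(w\mapsto Y_w)\circ(T\mapsto w(T))$ — since once that is in place the remaining content is just the two short ballot inequalities above.
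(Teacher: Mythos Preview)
Your proof is correct and differs from the paper's in one substantive way. The paper handles well-definedness with the same ballot observation you use (for each $k$ there are at least as many left half-edges as right half-edges with label $\le k$), handles injectivity by noting that the set of left half-edges determines the tree, and then concludes bijectivity by citing the equal cardinalities $|\mathcal T_n|=|YT_{(n,n)}|=C_n$ from \cite{Sta99}. You instead factor $\phi$ explicitly through Dyck words and prove surjectivity directly with a second ballot inequality, constructing the inverse $Y\mapsto w\mapsto T$ without appeal to cardinality. What this buys you is a self-contained argument that exhibits $\phi^{-1}$ concretely (useful later in the paper, where $\phi^{-1}$ is invoked repeatedly); what the paper's approach buys is brevity, at the cost of importing the Catalan enumeration as a black box. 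Either route is fine here.
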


\begin{proof}
The half-edges of a plane tree are labeled counterclockwise, so for each $k$ there are at least as many left-edges $i$ with $i \leq k$ as right edges $j$ with $j \leq k$.  Thus if $i$ is above $j$ in a column of the Young tableau $\phi(T)$ then $i<j$.  It follows that $\phi$ is well-defined.

If $\phi(T)=\phi(T')$ then both $T$ and $T'$ have the same set of left half-edges and the same set of right half-edges.  Since by definition every subtree of a plane tree is linearly ordered, the indexing of the half-edges determines the plane tree.  So $\phi$ is injective.

The sets $\mathcal{T}_n$ and $Y_{(n,n)}$ have the same cardinality so the map $\phi$ is a bijection, as desired.
\end{proof}
 
In order to prove our main result, we need more precise information about the fragments of a plane tree that correspond to the boxes filled with $i$ and $i+1$ in a standard Young tableau.  The next lemma compiles this information.

\begin{lemma} \label{lemma: edges for i,i+1}
Consider a standard Young tableau $Y$ of shape $(n,n)$ and its preimage $\phi^{-1}(Y)$ under the bijection in Definition \ref{definition: plane trees and tableaux}.  The half-edges corresponding to $i$ and $i+1$ are in one of the following relative positions:
\begin{enumerate}[(i)]
\item The numbers $i$ and $i+1$ are on the same row in $Y$ if and only if $i$ and $i+1$ label the half-edges of $\phi^{-1}(Y)$ in one of the following ways (Figure~\ref{fig:samerow}).

\begin{figure}[H]
\begin{center}
\begin{subfigure}[b]{0.45\textwidth}  
\centering
\begin{tikzpicture}[level distance=1cm,
level 1/.style={sibling distance=2cm},
level 2/.style={sibling distance=1cm}]
\tikzstyle{every node}=[circle, draw, scale=.8, inner sep=2pt]
\vspace{10pt}
\node (Root)[fill] {}
    child {
    node[fill] {}
    child { node[fill] {} 
	edge from parent 
	node[left, draw=none] {\tiny $i + 1$}
	node[right,draw=none]  {\tiny $j'$}    
    }
	edge from parent 
	node[left, draw=none] {\tiny $i$}
	node[right, draw=none]  {\tiny $j$}
};
\end{tikzpicture}  
\caption{$i$ and $i+1$ label left half-edges \label{fig:samerowleft}}   
\end{subfigure}
\begin{subfigure}[b]{0.45\textwidth}
\centering
  \begin{tikzpicture}[level distance=1cm,
level 1/.style={sibling distance=2cm},
level 2/.style={sibling distance=1cm}]
\tikzstyle{every node}=[circle, draw, scale=.8, inner sep=2pt]
\vspace{10pt}
\node (Root)[fill] {}
    child {
    node[fill] {}
    child { node[fill] {} 
	edge from parent 
	node[left, draw=none] {\tiny $j'$}
	node[right, draw=none]  {\tiny $i$}    
    }
	edge from parent 
	node[left, draw=none] {\tiny $j$}
	node[right, draw=none]  {\tiny $i+1$}
};
\end{tikzpicture}
\caption{$i$ and $i+1$ label right half-edges \label{fig:samerowright}}   
\end{subfigure}
\end{center}
\caption{$i$ and $i+1$ on same row}\label{fig:samerow}
\end{figure}
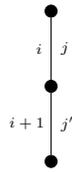
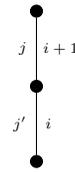

\item The numbers $i$ and $i+1$ are on opposite rows in $Y$ if and only if in $\phi^{-1}(Y)$ either $i$ and $i+1$ label a leaf (Figure~\ref{fig:diffrowleaf}) or the interior of a peak (Figure~\ref{fig:diffrowV}).

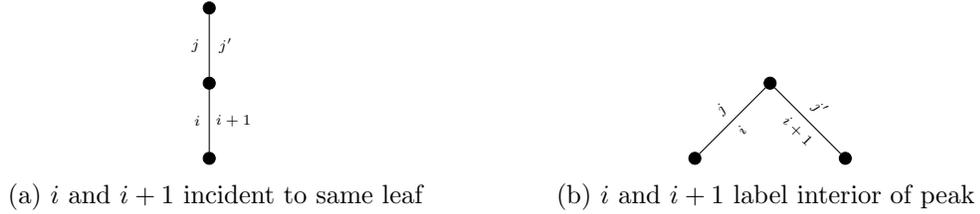
\begin{figure}[H]
\begin{center}
\begin{subfigure}[b]{0.5\textwidth}  
\centering
  \begin{tikzpicture}[level distance=1cm,
level 1/.style={sibling distance=2cm},
level 2/.style={sibling distance=1cm}]
\tikzstyle{every node}=[circle, draw, scale=.8, inner sep=2pt]
\vspace{10pt}
\node (Root)[fill] {}
    child {
    node[fill] {}
    child { node[fill] {} 
	edge from parent 
	node[left, draw=none] {\tiny $i$}
	node[right,draw=none]  {\tiny $i + 1$}    
    }
	edge from parent 
	node[left, draw=none] {\tiny $j$}
	node[right, draw=none]  {\tiny $j'$}
};
\end{tikzpicture}
\caption{$i$ and $i+1$ incident to same leaf \label{fig:diffrowleaf}}
\end{subfigure}
\begin{subfigure}[b]{0.45\textwidth}  
\centering
  \begin{tikzpicture}[sloped, level distance=1cm,
level 1/.style={sibling distance=2cm},
level 2/.style={sibling distance=1cm}]
\tikzstyle{every node}=[circle, draw, scale=.8, inner sep=2pt]
\vspace{10pt}
\node[inner sep=2pt] (Root)[fill] {}
    child {
    node[fill] {} 
    	edge from parent 
	node[ellipse, above, draw=none] {\tiny $j$}
	node[ellipse, below, draw=none]  {\tiny $i$}  
}
child {
    node[fill] {}
    	edge from parent 
	node[ellipse, below, draw=none] {\tiny $i + 1$}
	node[ellipse, above, draw=none]  {\tiny $j'$}  
};
\end{tikzpicture}
\caption{$i$ and $i+1$ label interior of peak \label{fig:diffrowV}} 
\end{subfigure}
\end{center}
\caption{$i$ and $i + 1$ on different rows \label{fig:diffrow}}
\end{figure}

\item The numbers $i$ and $i+1$ are on the same column in $Y$ if and only if $i$ and $i+1$ label a leaf incident to the root in $\phi^{-1}(Y)$ (Figure~\ref{fig:samecolumn}).
\begin{figure}[H]
\begin{center}
  \begin{tikzpicture}[level distance=1cm,
level 1/.style={sibling distance=2cm},
level 2/.style={sibling distance=1cm}]
\tikzstyle{every node}=[circle, draw, scale=.8, inner sep=2pt]
\vspace{10pt}
\node (Root)[] {}
    child { node[fill] {} 
	edge from parent 
	node[left, draw=none] {\tiny $i$}
	node[right, draw=none]  {\tiny $i + 1$}    
    };
\end{tikzpicture}
\end{center}
\caption{$i$ and $i + 1$ on same column \label{fig:samecolumn}}
\end{figure}
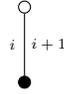
\end{enumerate}
In no case is there an additional half-edge incident to the vertex between $i$ and $i+1$.
\end{lemma}

\begin{proof}
By convention, plane trees are labeled counterclockwise from the root. Hence there can be no edges or half-edges on the vertex incident to both $i$ and $i+1$. We think of each edge $e(i,j)$ as having a left half-edge labeled $i$ and a right half-edge labeled $j$.

\begin{enumerate}[(i)]
\item
Consider the case where the numbers $i$ and $i+1$ are on the same row in $Y$. By definition of $\phi$ the top row of the Young tableau has the labels on the left half-edges of the corresponding plane tree while the bottom row has the labels on the right half-edges. Suppose $i$ and $i+1$ are on the top row of the Young tableau. Then $i$ and $i+1$ are left half-edges and must be in the configuration shown in Figure~\ref{fig:samerowleft}. Suppose $i$ and $i+1$ are on the bottom row of the Young tableau. Then $i$ and $i+1$ are right half-edges and must be in the configuration shown in Figure~\ref{fig:samerowright}.

\item
Consider the case where the numbers $i$ and $i+1$ are on different rows in $Y$. Suppose $i$ is on the top row and $i+1$ is on the bottom row. Then $i$ is a left half-edge and $i+1$ is a right half-edge. That means these two numbers will label the same leaf in the tree, as shown in Figure~\ref{fig:diffrowleaf}. Now suppose $i+1$ is in the top row and $i$ is in the bottom row of $Y$. Then $i$ labels a right half-edge and $i+1$ labels a left half-edge. In a plane tree, this configuration must be a peak with $i$ and $i+1$ labeling the interior, as shown in Figure~\ref{fig:diffrowV}.

\item
The numbers $i$ and $i+1$ are on the same column of $Y$ if and only if the first $\frac{i-1}{2}$ columns of $Y$ form a standard Young tableau of size $(\frac{i-1}{2}, \frac{i-1}{2})$ and filled with the numbers $1, 2, \ldots, i-1$.  By restricting $\phi$ to plane trees on $\frac{i-1}{2}$ edges we note that the first $\frac{i-1}{2}$ edges of the plane tree $\phi^{-1}(Y)$ form a subtree with the same root as $\phi^{-1}(Y)$.  This is equivalent to saying that $i-1$ labels the right half of an edge incident to the root, which is true if and only if $i$ and $i+1$ label the half-edges of a leaf incident to the root, as shown in Figure~\ref{fig:samecolumn}.
\end{enumerate}
\end{proof}

In the next theorem we use Lemma~\ref{lemma: edges for i,i+1} to show that if $i$ and $i+1$ are in different rows (but not in the same column) of a standard Young tableau then the action of the map $s_i$ on the tableau corresponds to a local move on the corresponding plane tree.  Henceforth the maps $s_i$ vary from $i=1$ to $i=2n-1$ since there are $2n$ boxes in the Young diagram.

\begin{theorem} \label{theorem: graph local moves}
Consider a plane tree $T$ and its image $Y=\phi(T)$ under the bijection in Definition \ref{definition: plane trees and tableaux}.  The half-edges in $T$ labeled $i$ and $i+1$ are in one of the two relative positions in Figure~\ref{fig:diffrow} if and only if the local move on edges with half-edges $j<i<i+1<j'$ produces the plane tree $\phi^{-1}(s_i (Y))$. 
\end{theorem}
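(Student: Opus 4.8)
The plan is to combine Lemma~\ref{lemma: edges for i,i+1} with a direct computation of how each type of local move acts on half-edge labels. First I would unpack the phrase ``the local move on edges with half-edges $j<i<i+1<j'$''. Since the half-edges of a plane tree are labelled counterclockwise from the root, two edges whose four half-edges occur in the cyclic order $j,i,i+1,j'$ with $i$ and $i+1$ consecutive can only arise in two ways: either (a) one of the edges is $e(j,j')$ and the other is $e(i,i+1)$, nested immediately inside $e(j,j')$ as a child, or (b) the two edges are consecutive siblings $e(j,i)$ and $e(i+1,j')$ with a common parent vertex. Because an edge with consecutive half-edge labels is necessarily a leaf, case (a) is exactly the configuration of Figure~\ref{fig:diffrowleaf} and case (b) is exactly the configuration of Figure~\ref{fig:diffrowV}. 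Hence a local move ``on edges with half-edges $j<i<i+1<j'$'' exists if and only if $i$ and $i+1$ occupy one of the two positions of Figure~\ref{fig:diffrow}; this already yields the ``if'' direction once we check the move has the claimed output, and reduces the ``only if'' direction to the same check.

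Next I would compute the output of the move in each case. In case (a) the move is of type~\eqref{type A local move}, replacing $e(j,j')$ and $e(i,i+1)$ by $e(j,i)$ and $e(i+1,j')$; reading off the left and right half-edges, every label keeps its side except that $i$ changes from a left half-edge to a right half-edge and $i+1$ changes from a right half-edge to a left half-edge. In case (b) the move is of type~\eqref{type B local move}, replacing $e(j,i)$ and $e(i+1,j')$ by $e(j,j')$ and $e(i,i+1)$, and again the only change is that $i$ and $i+1$ trade their left/right roles. Since by Definition~\ref{definition: plane trees and tableaux} a label sits in the top row of $\phi(T')$ precisely when it is a left half-edge of $T'$, in both cases the resulting tree $T'$ has $\phi(T')$ equal to the tableau obtained from $Y=\phi(T)$ by exchanging $i$ and $i+1$. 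On the other hand, Lemma~\ref{lemma: edges for i,i+1} shows that in configurations (a) and (b) the entries $i$ and $i+1$ of $Y$ lie in different rows, and --- since neither configuration is a leaf incident to the root --- not in the same column; so by Definition~\ref{definition: group action}, $s_iY$ is exactly $Y$ with $i$ and $i+1$ exchanged. Therefore $\phi(T')=s_iY$, i.e.\ $T'=\phi^{-1}(s_iY)$. Running this backwards gives the converse: if the local move on edges with half-edges $j<i<i+1<j'$ produces $\phi^{-1}(s_iY)$ then such a move exists, so by the first paragraph $i$ and $i+1$ are in one of the two positions of Figure~\ref{fig:diffrow}.

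I expect the main obstacle to be careful bookkeeping rather than any deep idea. One must verify that the counterclockwise labelling convention really forces the two edges of a move with ``inner pair'' $i,i+1$ to be exactly the nested pair of case (a) or the sibling pair of case (b) --- in particular ruling out configurations such as Figure~\ref{fig:samerowleft} or~\ref{fig:samerowright}, where $i$ and $i+1$ also lie on adjacent edges but are not the middle two half-edges of the relevant quadruple, so that no move ``on edges with half-edges $j<i<i+1<j'$'' exists. One also has to keep straight, in each of Figures~\ref{fig:diffrowleaf} and~\ref{fig:diffrowV}, which of $i$ and $i+1$ begins in the top row, so that the interchange of left/right roles matches the definition of $s_i$ on the tableau. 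Finally it is worth noting the borderline situation where $e(i,i+1)$ is a leaf incident to the root: here $i$ and $i+1$ lie in the same column, $s_iY=Y$, and there is no parent edge $e(j,j')$ to move against, so this is the configuration of Figure~\ref{fig:samecolumn} rather than Figure~\ref{fig:diffrowleaf} and is correctly excluded from both sides of the equivalence.
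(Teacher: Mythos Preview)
Your proposal is correct and follows essentially the same route as the paper: both invoke Lemma~\ref{lemma: edges for i,i+1} to identify the two admissible configurations, then check directly that the local move interchanges left/right status of exactly the half-edges $i$ and $i+1$, so that $\phi$ of the resulting tree is $s_iY$. Your write-up is in fact somewhat more careful than the paper's in explicitly separating the type~\eqref{type A local move} and type~\eqref{type B local move} cases and in handling the borderline leaf-at-the-root situation.
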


\begin{proof}
Lemma \ref{lemma: edges for i,i+1} showed  that $i$ and $i+1$ are on different rows and different columns exactly when $i$ and $i+1$ are in the configurations in Figure~\ref{fig:diffrow}. In fact, local moves exchange these two configurations because $j<i<i+1<j'$ as shown in Figure~\ref{fig:simpleLM}.  
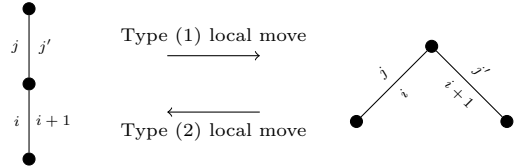
\begin{figure}[H]
\begin{center}
$
\begin{aligned}
  \begin{tikzpicture}[level distance=1cm,
level 1/.style={sibling distance=2cm},
level 2/.style={sibling distance=1cm}]
\tikzstyle{every node}=[circle, draw, scale=.8, inner sep=2pt]
\vspace{10pt}
\node (Root)[fill] {}
    child {
    node[fill] {}
    child { node[fill] {} 
	edge from parent 
	node[left, draw=none] {\tiny $i$}
	node[right,draw=none]  {\tiny $i + 1$}    
    }
	edge from parent 
	node[left, draw=none] {\tiny $j$}
	node[right, draw=none]  {\tiny $j'$}
};
\end{tikzpicture}
\end{aligned}
\quad 
\begin{aligned}
\begin{tikzpicture}
  [scale = 1.5]
  \node (n1a) at (0, 0) {};
  \node (n2a) at (1, 0) {};
  \node (n1b) at (0, -0.5) {};
  \node (n2b) at (1, -0.5) {};
  \draw[->] (n1a) -- (n2a) node[midway, above]  {\tiny Type (1) local move};
  \draw[->] (n2b) -- (n1b) node[midway, below]  {\tiny Type (2) local move};
\end{tikzpicture}
\end{aligned}
\quad
 \begin{aligned}
  \begin{tikzpicture}[sloped, level distance=1cm,
level 1/.style={sibling distance=2cm},
level 2/.style={sibling distance=1cm}]
\tikzstyle{every node}=[circle, draw, scale=.8, inner sep=2pt]
\vspace{10pt}
\node[inner sep=2pt] (Root)[fill] {}
    child {
    node[fill] {} 
    	edge from parent 
	node[ellipse, above, draw=none] {\tiny $j$}
	node[ellipse, below, draw=none]  {\tiny $i$}  
}
child {
    node[fill] {}
    	edge from parent 
	node[ellipse, below, draw=none] {\tiny $i + 1$}
	node[ellipse, above, draw=none]  {\tiny $j'$}  
};
\end{tikzpicture}
\end{aligned}
$
\end{center}
\caption{Edges of plane tree under local moves \label{fig:simpleLM}}
\end{figure}
Let $T'$ denote the image of $T$ under the allowed local move on half-edges $j<i<i+1<j'$ and let $Y'=\phi(T')$.  Comparing $T$ and $T'$ in Figure \ref{fig:simpleLM} shows that $i$ and $i+1$ change from a left half-edge to a right half-edge or vice versa.  Thus $i$ is on the opposite row in $Y$ as it is in $Y'$ and similarly for $i+1$.  By inspection of Figure \ref{fig:simpleLM}, both $j$ and $j'$ stay on the same respective halves of their shared edge in $T$ and $T'$. By definition, a local move changes only the two edges involved in the local move. Thus all other numbers remain on the same rows in the corresponding Young tableau, and so every other integer is in the same row in $Y$ as it is in $Y'$.  Finally $i$ and $i+1$ are on opposite rows in $Y$ by the hypotheses of the theorem together with Lemma \ref{lemma: edges for i,i+1}.  Thus $Y' = s_i(Y)$.  

Conversely suppose there is a local move involving the half-edges $j<i<i+1<j'$.  The configurations in Figure \ref{fig:simpleLM} are the only possibilities listed in Lemma~\ref{lemma: edges for i,i+1} that satisfy these inequalities.  The claim follows.
\end{proof}

\begin{samepage}
\begin{remark}\label{remark: not all local moves are perms}
Not every local move corresponds to one of the maps $s_i$. If $i$ and $i+1$ are on the same row or column of a tableau then $s_i$ fixes the tableau. Otherwise $s_i$ describes the local moves in Figure~\ref{fig:simpleLM} . But when $n >2$ a local move may be described by a transposition between $i$ and $i+k$ with $1<k<2n-i$ in the tableau. Figure~\ref{fig:commutative} gives an example.  The original tableau $Y$ has $1, 2, 3$ along its top row, so every transposition except $s_3$ fixes $Y$.  However the associated plane tree has a local move affecting the half-edges $1$, $2$, $5$, $6$ that corresponds to exchanging 2 and 5 in the tableau.  The tableau resulting from this local move differs both from the original tableau $Y$ and from $s_3(Y)$. 
\end{remark}

\begin{figure}[H]
\begin{center}
\begin{tikzpicture}[scale = 0.85, every node/.style={scale=0.85}]
    \node[minimum size=8em] (Mtree) at (0, 0) {
    \begin{tikzpicture}[level distance=1cm,
    level 1/.style={sibling distance=2cm},
    level 2/.style={sibling distance=1cm}]
    \tikzstyle{every node}=[circle, draw, scale=.8, inner sep=2pt, minimum size=0]
    \vspace{10pt}
    \node (Root)[] {}
    child {
    node[fill] {}
    child { 
    node[fill] {} 
    child {
    node[fill] {}
	edge from parent 
	node[left, draw=none] {\tiny $3$}
	node[right, draw=none]  {\tiny $4$}        
    }
	edge from parent 
	node[left, draw=none] {\tiny $2$}
	node[right, draw=none]  {\tiny $5$}    
    }
	edge from parent 
	node[left, draw=none] {\tiny $1$}
	node[right, draw=none]  {\tiny $6$}
    };
    \end{tikzpicture}
    };
    \node[minimum size=8em] (Rtree) at (6, 0) {
    \begin{tikzpicture}[sloped, level distance=1cm,
    level 1/.style={sibling distance=2cm},
    level 2/.style={sibling distance=2cm}]
    \tikzstyle{every node}=[circle, draw, scale=.8, inner sep=2pt, minimum size=0]
    \vspace{10pt}
    \node[inner sep=2pt] (Root)[] {}
    child {
    node[fill] {} 
    	child {
    	node[fill] {}
    	edge from parent 
	node[ellipse, above, draw=none]  {\tiny $2$}
	node[ellipse, below, draw=none] {\tiny $3$}  
    	} 
	child {
    	node[fill] {} 
    	edge from parent 
	node[ellipse, below, draw=none] {\tiny $4$}
	node[ellipse, above, draw=none]  {\tiny $5$}  
    	}
    	edge from parent 
	node[left, draw=none, rotate=90] {\tiny $1$}
	node[right, draw=none, rotate=90]  {\tiny $6$}  
    };
    \end{tikzpicture}
    };
    \node[minimum size=8em] (Ltree) at (-6, 0) {
    \begin{tikzpicture}[sloped, level distance=1cm,
    level 1/.style={sibling distance=1.3cm},
    level 2/.style={sibling distance=1cm}]
    \tikzstyle{every node}=[circle, draw, scale=.8, inner sep=2pt, minimum size=0]
    \vspace{10pt}
    \node[inner sep=2pt] (Root)[] {}
    child {
    node[fill] {} 
    	edge from parent 
	node[ellipse, above, draw=none] {\tiny $1$}
	node[ellipse, below, draw=none]  {\tiny $2$}  
    }
    child {
    node[fill] {} 
    	edge from parent 
	node[ellipse, left, draw=none, rotate=90] {\tiny $3$}
	node[ellipse, right, draw=none, rotate=90]  {\tiny $4$}  
    }
	child {
    	node[fill] {}
    	edge from parent 
	node[ellipse, below, draw=none] {\tiny $5$}
	node[ellipse, above, draw=none]  {\tiny $6$}  
    };
    \end{tikzpicture}
    };
    \node[minimum size=5em, inner sep = 25] (Mbox) at (0, -4.85) {
    \tiny
    \begin{tabular}{ | c | c | c |}
    \hline
    1 & 2 & 3 \\ \hline
    4 & 5 & 6 \\ \hline
    \end{tabular}
    };
    \node[minimum size=5em, inner sep = 25] (Rbox) at (6, -4.85) {
    \tiny  
    \begin{tabular}{ | c | c | c |}
    \hline
    1 & 2 & 4 \\ \hline
    3 & 5 & 6 \\ \hline
    \end{tabular}
    };
    \node[minimum size=5em] (Lbox) at (-6, -4.85) {
    \tiny  
    \begin{tabular}{ | c | c | c |}
    \hline
    1 & 3 & 5 \\ \hline
    2 & 4 & 6 \\ \hline
    \end{tabular}
    };
        
    \draw[->] (0, -2)--(Mbox) node [midway, right] {\tiny $\phi$}; 
    \draw[->] (6, -2)--(Rbox) node [midway, right] {\tiny $\phi$}; 
    \draw[->] (-6, -2)--(Lbox) node [midway, right] {\tiny $\phi$}; 
    
    \draw (-0.2, -5.7) edge[dashed, loop, distance=2.25cm, out=235, in=305] node (loop) [below] {\tiny $s_{1}$, $s_{2}$, $s_{4}$ ,$s_{5}$} (0.2, -5.7);   
     \draw[-] (Mbox)--(Rbox) node [midway, above] {\tiny $s_{3}$};
    \draw[shorten >=0.125cm, shorten <=0.125cm, ->] (Mtree)--(Ltree) node [midway, above] (lml) {\tiny Type (1) local move};
    \draw[shorten >=0.125cm, shorten <=0.125cm, ->] (Mtree)--(Rtree) node [midway, above] (lmr) {\tiny Type (1) local move};

\end{tikzpicture}
\end{center}
\caption{A local move that does not correspond to a permutation $s_i$ \label{fig:commutative}}
\end{figure}
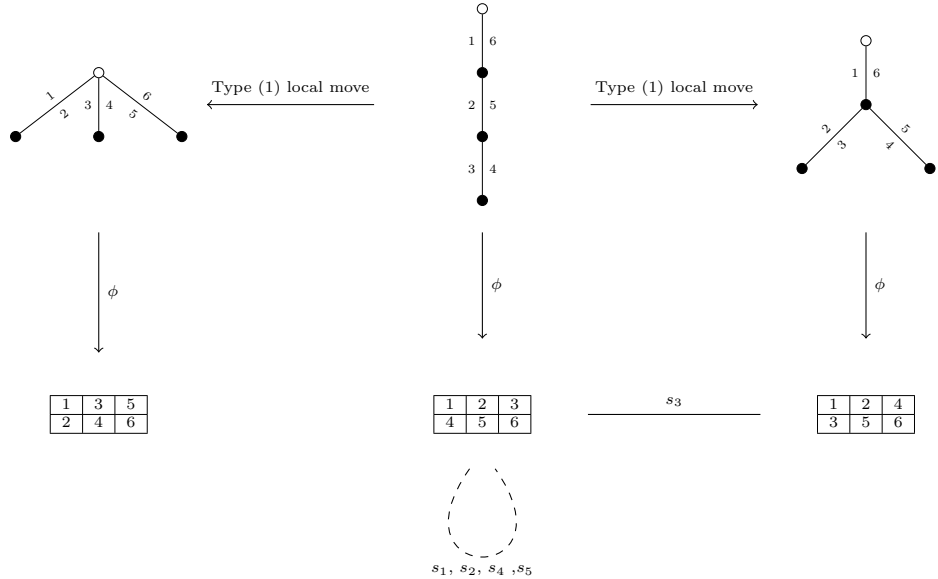
\end{samepage}

To avoid this ambiguity we have the following definition.  
 
\begin{definition}
Suppose $T$ is a plane tree with $n$ edges whose associated standard Young tableau is $\phi(T)=Y$.
An $s_i$-local move is a local move that is consistent with one of the maps $s_i$ in the sense that the local move sends $T$ to $\phi^{-1}(s_{i}(Y))$ for some $s_i$ with $i=1,2,\ldots,2n-1$. An $s_i$-local move is trivial if $s_i(Y)=Y$.
\end{definition}

We conclude this section with an open question.

\begin{question}
What other types of transpositions $(i,j)$ can also be interpreted as local moves on plane trees?
\end{question}

\section{The graph of $s_i$-local moves in type $A$}\label{section: graph in type A}

Theorem \ref{theorem: graph local moves} showed that the graph whose vertices are plane trees with $n$ edges and whose edges are $s_i$-local moves is isomorphic to the graph in Definition \ref{definition: graph on tableaux} for the partition $(n,n)$.  Remark \ref{remark: not all local moves are perms} demonstrated that this graph is a subgraph (proper subgraph for $n > 2$) of the graph of plane trees under {\em all} local moves.  

Heitsch studied the graph of plane trees under {\em all} local moves and compared it to similar graphs for other combinatorial objects enumerated by Catalan numbers \cite{Hei}.  However when we remove edges from these graphs, many of Heitsch's properties no longer hold.   We explore the statistics of these modified graphs in this section.  We restrict our attention to the partition $(n,n)$ and denote the graph from Definition \ref{definition: graph on tableaux} by $G^A$.  We refer to $G^A$ as the graph of $s_i$-local moves in type $A$.  Note that the permutations whose corresponding maps $s_i$ are defined on this partition are in $S_{2n}$ rather than $S_n$.  (In later sections we look at local moves corresponding to other Weyl groups.)

We begin by proving that the graph of $s_i$-local moves is still connected in type $A$.

\begin{proposition}\label{proposition: connected}
The graph $G^A$ is connected.
\end{proposition}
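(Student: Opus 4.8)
The plan is to show that every standard Young tableau of shape $(n,n)$ can be connected by a path of $s_i$-local moves to a single fixed tableau, which I will take to be the ``column superstandard'' tableau $Y_0$ whose first row is $1,2,\ldots,n$ and whose second row is $n+1,n+2,\ldots,2n$; equivalently, by Theorem \ref{theorem: graph local moves}, it suffices to work on the tableau side and produce, for an arbitrary $Y$, a sequence of maps $s_i$ (each acting nontrivially, i.e.\ each corresponding to a genuine edge of $G^A$) carrying $Y$ to $Y_0$. Since $G^A$ is an undirected graph (the $s_i$ are involutions), connecting each vertex to $Y_0$ suffices for connectedness.

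The main step is a descent/straightening argument. Given $Y \neq Y_0$, I want to find some $i$ such that $s_i(Y) \neq Y$ and such that $s_i(Y)$ is strictly closer to $Y_0$ in some monovariant. A natural monovariant is the sum $\sum_{k} (\text{row index of } k \text{ in } Y)$, or equivalently the number of entries in the bottom row that are ``too small,'' i.e.\ the quantity $N(Y) = \#\{\, k \le n : k \text{ lies in the bottom row of } Y\,\}$, which is $0$ exactly when $Y = Y_0$. First I would observe that if $Y \neq Y_0$ then there is an entry $k \le n$ in the bottom row; choose the largest such $k$. Then $k+1,\ldots,n$ all lie in the top row, and I would argue that the entry directly above $k$ in its column is some $m$ with $m \le k-1 < k < k+1$, so $k-1$ cannot be in the same column as $k$ unless $k-1$ is immediately above it. The key point to check is that one can always find an $i$ with $i$ in the bottom row, $i+1$ in the top row, and $i,i+1$ not in the same column — then by Definition \ref{definition: group action} the map $s_i$ is nontrivial and it moves $i$ up to the top row and $i+1$ down, which I must verify decreases $N(Y)$ (it does, since exactly one ``small'' index leaves the bottom row: $i+1 > i$, and if $i+1 \le n$ then it was in the top row and moves down, net change could be zero — so I need to be more careful here).

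Because of that subtlety, the cleaner monovariant to use is the pair $(\text{number of inversions between the two rows})$ read off from the standard order, or best of all: induct by peeling off the largest entry. Here is the version I would actually write. Induct on $n$. The entry $2n$ must occupy the bottom-right box of any $Y \in YT_{(n,n)}$, and the entry $2n-1$ occupies either the box just left of it (bottom row) or the box just above it (top-right). If $2n-1$ is in the top-right box, then $2n-2, 2n-1$ are not in the same row and — since $2n-1$ sits above $2n$ in the last column while $2n-2 < 2n-1$ is elsewhere — they are not in the same column either, so $s_{2n-2}$ is a nontrivial move; I claim $s_{2n-2}(Y)$ has $2n-1$ in the bottom row. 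Wait, $s_{2n-2}$ swaps $2n-2$ and $2n-1$, which is the wrong transposition; instead I should track $2n-1$ directly. The correct statement: repeatedly apply $s_i$ for decreasing $i$ to ``bubble'' the largest misplaced small entry. Concretely, I would prove the lemma that from any $Y$ one can reach, via nontrivial $s_i$-moves, a tableau whose last column is $\binom{n}{2n}$ and then restrict to the first $n-1$ columns, which form a standard tableau of shape $(n-1,n-1)$, and invoke the inductive hypothesis; crucially, Lemma \ref{lemma: edges for i,i+1} together with Theorem \ref{theorem: graph local moves} guarantees the restricted moves are exactly the $s_i$-local moves of the smaller graph, so the induction goes through. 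The hard part — and the place I expect to spend the most care — is precisely this reduction lemma: showing one can always force $n$ and $2n$ into the last column using only nontrivial $s_i$, i.e.\ verifying that whenever the last column is not $\binom{n}{2n}$ there is an available $i$ in different rows and different columns whose move makes progress toward that configuration; this amounts to a small case analysis on where $n$ and $2n$ sit, and the only genuine obstacle is handling the ``same column'' exclusion in Definition \ref{definition: group action}, which by the last bullet of Lemma \ref{lemma: edges for i,i+1} only blocks the move when $i,i+1$ label a leaf incident to the root — a configuration that cannot persist while progress is still possible.
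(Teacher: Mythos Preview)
Your inductive scheme has a genuine gap that makes it circular. You propose to reduce to a tableau whose last column is $\binom{n}{2n}$ and then peel off that column to invoke the hypothesis on shape $(n-1,n-1)$. But observe what ``last column equals $\binom{n}{2n}$'' actually forces: the top row is an increasing sequence of $n$ integers ending in $n$, so it must be $1,2,\ldots,n$, and hence the tableau is already $Y_0$. Thus your ``reduction lemma'' is exactly the statement that every $Y$ is connected to $Y_0$ --- which is the whole proposition. The induction never gets off the ground, and the ``small case analysis on where $n$ and $2n$ sit'' you defer is not small: $2n$ is always in the bottom-right box, so the entire content is moving $n$ to the top-right, i.e.\ reaching $Y_0$. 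Even setting that aside, the restriction idea has a second problem: after removing the last column you are left with entries $\{1,\ldots,n-1,n+1,\ldots,2n-1\}$, and the $s_i$-moves of the smaller problem (after relabelling) do not all correspond to $s_j$-moves of the original --- the relabelled $s_{n-1}$ would swap $n-1$ with $n+1$, which is not a simple transposition in the ambient tableau.

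The paper takes a different and cleaner route that avoids both issues: rather than inducting on $n$ or targeting $Y_0$, it connects two arbitrary tableaux $Y$ and $Y'$ by inducting on the smallest index $i$ lying in different rows of $Y$ and $Y'$. If $i,i+1,\ldots,i+k$ all share a row in $Y$ and $i+k+1$ is in the opposite row, then applying $s_{i+k},s_{i+k-1},\ldots,s_i$ in order moves $i$ to the opposite row while fixing $1,\ldots,i-1$, so the first differing index strictly increases. Your abandoned first idea (a monovariant tracking the earliest disagreement) is in fact the right instinct; what you were missing is the bubbling sequence $s_{i+k}\cdots s_i$ rather than a single $s_i$, together with the observation that the same-column obstruction of Definition~\ref{definition: group action} cannot occur at any step of this sequence once you know $1,\ldots,i-1$ already agree with $Y'$.
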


\begin{proof}

We describe a way to construct a path between any two standard Young tableaux $Y$ and $Y'$ that both have shape $(n,n)$.  If $Y=Y'$ then the path is trivial.  We now induct on the minimum number $i$ that lies on opposite rows in $Y$ and $Y'$.  Suppose that $i$ is the smallest number whose row in $Y$ is different from that in $Y'$.  Suppose further that $i, i+1, i+2, \ldots, i+k$ are all on the same row and $i+k+1$ is on the opposite row in $Y$.  (We allow $k$ to be zero.) \\
\\
We first prove that in $Y$ the number $i+k+1$ is not in the same column as any of $i,i+1,\ldots,i+k$.  Indeed if $i$ is on the bottom row then $i+k+1$ must be in a column to the right of $i+k$ in order for $Y$ to be standard.  Now suppose that $i$ is on the top row of $Y$ and thus on the bottom row of $Y'$.  In $Y'$ we know that $i$ is directly below one of $1,2,\ldots,i-1$ in order for $Y'$ to be standard.  Both $Y$ and $Y'$ have $1,2,\ldots,i-1$ in the same positions, so $Y$ has an empty box in the bottom row below one of $1,2,\ldots,i-1$.  This must be the box occupied by $i+k+1$. \\
 \\
Now consider the standard tableau $s_i s_{i+1} s_{i+2} \cdots s_{i+k-1}s_{i+k}(Y)$.  It is connected to $Y$ in the graph $G^A$ by construction. The numbers $1,2,\ldots,i-1$ are in the same positions in $s_i s_{i+1} \cdots s_{i+k}(Y)$ as in $Y$.  Furthermore the number $i$ occupies opposite rows in $s_i s_{i+1} \cdots s_{i+k}(Y)$ and $Y$.  Thus the first $i$ numbers are on the same rows in $s_i s_{i+1} \cdots s_{i+k}(Y)$ as in $Y'$.  If $1,2,\ldots,2n-1$ are all on the same rows in $Y$ as in $Y'$ then $2n$ must also be on the same row in $Y$ and $Y'$.  (Indeed $2n$ is  on the bottom row for all standard tableaux.)  By induction we can find a path from $Y$ to $Y'$ in $G^A$ as desired. 
\end{proof}

The graph of plane trees under local moves has the structure of a graded poset.  This is true for $G^A$ as well, but for a different rank function.  The next two results describe {\em total distance} and {\em total number of descendants}, two functions that rank $G^A$.  Like Heitsch, we find that the language of plane trees characterizes the ranking more naturally than tableaux.  In particular, we show that $s_i$-local moves change both the total distance and the total number of descendants by exactly one.  

\begin{proposition}\label{proposition: total distance}
Fix a plane tree $T$ with root $v_0$.

The total distance of the plane tree $d_T$ is defined as $d_T = \sum_{v \in V(T)} dist(v,v_0)$.

If $T'$ is obtained from $T$ by an $s_{i}$-local move of type \eqref{type A local move} then $d_T-1=d_{T'}$.
If $T'$ is obtained from $T$ by an $s_{i}$-local move of type \eqref{type B local move} then $d_T+1=d_{T'}$.
\end{proposition}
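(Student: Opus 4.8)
The plan is to re-encode the total distance in terms of the half-edge labels, after which both statements reduce to one-line cancellations.

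First I would prove the identity
\[
d_T \;=\; \sum_{e(i,j)\in E(T)} \frac{j-i+1}{2}.
\]
Since $\mathrm{dist}(v,v_0)$ is the number of edges on the unique path from $v_0$ to $v$, it equals the number of edges $e$ below which $v$ lies; interchanging the order of summation gives $d_T=\sum_e |S_e|$, where $S_e$ denotes the subtree rooted at the lower endpoint of $e$. For $e=e(i,j)$, the counterclockwise labelling means the depth-first traversal of $T$ enters $S_e$ at step $i$ and leaves it at step $j$, so the half-edges carried by edges of $S_e$, together with the two half-edges $i$ and $j$ of $e$ itself, are exactly $i,i+1,\dots,j$; as each edge has two half-edges and $S_e$ has one more vertex than edge, $|S_e|=\tfrac{j-i+1}{2}$.

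Next I would use that a local move changes only the two edges it touches, so all but four summands of $\sum_{e(i,j)}\tfrac{j-i+1}{2}$ are common to $T$ and $T'$. For a move of type \eqref{type A local move}, the edges $e(i,j)$ and $e(i',j')$ are deleted and $e(i,i')$ and $e(j',j)$ are created, so
\[
d_{T'}-d_T \;=\; \frac{i'-i+1}{2}+\frac{j-j'+1}{2}-\frac{j-i+1}{2}-\frac{j'-i'+1}{2}\;=\;i'-j'.
\]
In a type \eqref{type A local move} move the inner edge $e(i',j')$ has no edge nested below it (equivalently it is a leaf; compare the configurations in Figure~\ref{fig:diffrow}), so its two half-edges are consecutive, $j'=i'+1$, and hence $d_{T'}-d_T=-1$. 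For a move of type \eqref{type B local move}, the edges $e(i,j)$ and $e(i',j')$ are deleted and $e(i,j')$ and $e(j,i')$ are created; the identical computation gives $d_{T'}-d_T=i'-j$, and since $e(j,i')$ is now a leaf of $T'$ we have $i'=j+1$, i.e.\ $d_{T'}-d_T=+1$.

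The substantive step is the subtree-size formula $|S_e|=\tfrac{j-i+1}{2}$, so the main difficulty is really bookkeeping: establishing that formula carefully from the traversal convention, and then reading off correctly from Definition~\ref{definition: local moves} the half-edge interval of each of the four edges involved, so that the cancellation leaves exactly $\pm 1$. A more hands-on alternative is to track how each vertex's depth changes under the move and observe that only vertices in a bounded region are affected; that is geometrically clearer but more awkward to write because of the sibling subtrees hanging at the shared vertex, so I would prefer the algebraic computation above.
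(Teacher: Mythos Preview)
Your argument is correct. The identity $d_T=\sum_{e(i,j)}\tfrac{j-i+1}{2}$ holds (note that $\sum_e|S_e|$ is nothing other than $des_T$, so this is the equality $d_T=des_T$ that the paper records in the remark after Proposition~\ref{proposition: ranked poset}), and your cancellation is accurate. You are also right that the step $j'=i'+1$ (resp.\ $i'=j+1$) requires the move to be an $s_i$-local move rather than an arbitrary one; since the proposition sits in Section~\ref{section: graph in type A} that is the intended reading, and your appeal to Figure~\ref{fig:diffrow} is exactly the justification.

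The paper, however, takes precisely the ``hands-on alternative'' you sketch and set aside: it draws the two configurations with all ambient subtrees attached (Figure~\ref{fig:blob1}), observes that every vertex in those subtrees keeps its depth, and notes that the lone vertex between half-edges $i$ and $i{+}1$ moves by one level. The whole proof is three sentences; the sibling subtrees you worried about are harmless once the picture is in front of you, since their depths visibly do not change. What your algebraic route buys instead is generality: the intermediate formulas $d_{T'}-d_T=i'-j'$ and $d_{T'}-d_T=i'-j$ are valid for \emph{arbitrary} local moves, so your computation simultaneously explains why total distance fails to grade the graph of all local moves by single steps (compare Remark~\ref{remark: not unimodal}).
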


\begin{proof}
The proof follows by comparing the distances in the schematics in Figure~\ref{fig:blob1}.  An $s_{i}$-local move does not change the distance between the root and the vertices in the subtrees $a$, $b$, $c$, $d$, and $e$, each of which can be empty.  In the tree to the left, the leaf between half-edges $i$ and $i+1$ has no descendants.  Moreover this vertex is one edge farther from the root than both ``ankles" of the tree to the right are, changing the total distance by exactly one.
\end{proof}
\begin{center}
\begin{figure}[H]
$$
\begin{aligned}
\tikzset{blob/.style={draw, dashed, kite, rounded corners, minimum size=1.4cm}} 
\begin{tikzpicture}
[level distance=1.46cm,
level 1/.style={sibling distance=1.5cm},
level 2/.style={sibling distance=3cm}]
\tikzstyle{every node}=[circle, draw, scale=0.8, inner sep=2pt]
\node (root) {\tiny $v_0$}
	child{
                   node[fill] {}
        		edge from parent[draw=none]
                            child { node[fill] {}      
                            child { node[fill] {} 
	edge from parent 
	node[left, draw=none] {\tiny $i$}
	node[right,draw=none]  {\tiny $i + 1$}    
    }
 	edge from parent 
	node[left, draw=none] {\tiny $j$}
	node[right, draw=none]  {\tiny $j'$}
    }
    node[blob, shape border rotate=0, xshift=0cm, yshift=0.3cm] {$a$} 
    node[blob, shape border rotate=90, xshift=-1.1cm, yshift=-0.8cm] {$b$} 
    node[blob, shape border rotate=270, xshift=1.1cm, yshift=-0.8cm] {$e$} 
    node[blob, shape border rotate=90, xshift=-1.1cm, yshift=-2.65cm] {$c$}  
    node[blob, shape border rotate=270, xshift=1.1cm, yshift=-2.65cm] {$d$}  
    };
\end{tikzpicture}
\end{aligned}
\quad
\begin{aligned}
\begin{tikzpicture}
  [scale = 1.5]
  \node (n1) at (0, 0) {};
  \node (n2) at (1.5, 0) {};
  \draw[->] (0, 0.25) -- (1.5, 0.25) node[midway, above] {\tiny Type (1) local move};
   \draw[<-] (0, -0.25) -- (1.5, -0.25) node[midway, below] {\tiny Type (2) local move};
\end{tikzpicture}
\end{aligned}
\quad
\begin{aligned}
\tikzset{blob/.style={draw, dashed, kite, rounded corners, minimum size=1.4cm}} 
\begin{tikzpicture}
[level distance=1.46cm,
level 1/.style={sibling distance=1.5cm},
level 2/.style={sibling distance=2cm}]
\tikzstyle{every node}=[circle, draw, scale=.8, inner sep=2pt]
\node(root) {\tiny $v_0$}
child{
    node[fill] (top) {}
        	edge from parent[draw=none]
child {
    node[fill] (loleft) {} 
    	edge from parent 
	node[ellipse, above, draw=none, sloped] {\tiny $j$}
	node[ellipse, below, draw=none, sloped]  {\tiny $i$}  
}
child {
    node[fill] (loright) {}
        	edge from parent 
	node[ellipse, below, draw=none, sloped] {\tiny $i + 1$}
	node[ellipse, above, draw=none, sloped]  {\tiny $j'$}  
}
    node[blob, shape border rotate=0, xshift=0cm, yshift=0.3cm] {$a$} 
    node[blob, shape border rotate=90, rotate=-15, xshift=-0.9cm, yshift=-0.8cm] {$b$} 
    node[blob, shape border rotate=270, rotate=15, xshift=0.9cm, yshift=-0.8cm] {$e$} 
    node[blob, shape border rotate=90, rotate=30, xshift=-3.5cm, yshift=-1.675cm] {$c$} 
    node[blob, shape border rotate=270, rotate=-30, xshift=3.5cm, yshift=-1.675cm] {$d$} 
}; 
\end{tikzpicture}
\end{aligned}
$$
\caption{Edges with subtrees under $s_i$-local moves \label{fig:blob1}}
\end{figure}
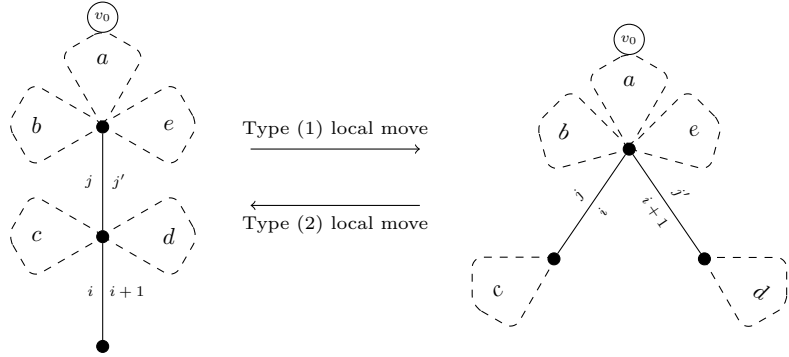
\end{center}
\begin{proposition}\label{proposition: total number of descendants}
Fix a plane tree $T$ with root $v_0$.

The total number of descendants in $T$ is defined as $des_T = \sum_{v \in V(T)} \big| \left\{ \textup{descendants of } v \right\} \big|$.

If $T'$ is obtained from $T$ by an $s_{i}$-local move of type \eqref{type A local move} then $des_T-1=des_{T'}$.
If $T'$ is obtained from $T$ by an $s_{i}$-local move of type \eqref{type B local move} then $des_T+1=des_{T'}$.
\end{proposition}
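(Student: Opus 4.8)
The plan is to mirror the proof of the previous proposition (on total distance) using the same schematic picture, Figure~\ref{fig:blob1}, and to track how the number of descendants of each vertex changes under the local move. I will use the notation of that figure: the two edges involved in the move are $e(j,j')$ and $e(i,i+1)$ in the type~\eqref{type A local move} configuration (the ``peak'' with a pendant leaf hanging below), versus $e(j,i)$ and $e(i+1,j')$ in the type~\eqref{type B local move} configuration (the bare peak of two leaves), and the unaffected subtrees sit in the five ``blob'' locations a, b, c, d, e. By Lemma~\ref{lemma: edges for i,i+1} the vertex between the half-edges $i$ and $i+1$ has no other incident edges, so it contributes $0$ descendants in both trees, and every vertex inside a blob has exactly the same set of descendants in $T$ and in $T'$.

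First I would isolate exactly which vertices can possibly change their descendant count. Only the two vertices that serve as endpoints of the moved edges are candidates: call $u$ the vertex incident to the half-edge $j$ and $w$ the vertex incident to the half-edge $j'$. In the type~\eqref{type A local move} tree $T$, the vertex $u$ is the parent of the edge $e(j,j')$ and $w$ is its child; the pendant leaf between $i$ and $i+1$ hangs off $w$. In the type~\eqref{type B local move} tree $T'$, both $e(j,i)$ and $e(i+1,j')$ are children of $u$ (a peak), so $w$ is now a leaf. The key bookkeeping step: in passing from $T$ to $T'$, the vertex $w$ loses the single descendant that was the leaf between $i$ and $i+1$ (so $w$ loses exactly one descendant), while $u$ keeps all of its descendants — the set of vertices below $u$ is unchanged, since the local move only rearranges which edges are incident to which endpoints but does not move any vertex into or out of the subtree rooted at $u$. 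Every other vertex has an unchanged descendant set: vertices in blobs a, b, e are unaffected because nothing below $w$ or $u$ is reached through them in a way the move alters, and vertices in blobs c, d together with the peak-endpoint $w$ have already been accounted for. Summing $|\{\textrm{descendants of }v\}|$ over all $v$, the only net change is the $-1$ at the vertex $w$, giving $des_{T'} = des_T - 1$ when $T'$ is obtained by a type~\eqref{type A local move} move, and the reverse inequality (a gain of one descendant at $w$) when $T'$ is obtained by a type~\eqref{type B local move} move.

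The one place that needs genuine care — and which I expect to be the main obstacle — is justifying rigorously that $u$'s descendant set really is unchanged, and more generally that no vertex other than $w$ changes count. The subtlety is that blobs c and d (the subtrees hanging below the peak) are attached to different parents in $T$ versus $T'$: in $T$ one of them hangs below the leaf-endpoint of $e(i,i+1)$'s parent and in $T'$ they hang off the two leaves of the bare peak, etc. I would resolve this by arguing directly from Definition~\ref{definition: local moves}: a local move on $e(i,j),e(i',j')$ deletes those two edges and inserts two new ones on the same four half-edge endpoints, leaving every other edge of the tree (hence every other parent-child relation, hence every blob's internal structure and its point of attachment to the rest of the tree) literally untouched. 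Consequently the ancestor relation changes only for vertices that lie strictly between the endpoints of the old/new edges in the tree order, and a short case check against the four pictures in Figure~\ref{fig:blob1} — using that the $i,i+1$ vertex is a leaf — shows the only vertex whose descendant count is affected is $w$. I would then phrase the final proof as a single paragraph: ``By Definition~\ref{definition: local moves} the local move alters only the two edges shown in Figure~\ref{fig:blob1}; by Lemma~\ref{lemma: edges for i,i+1} the vertex between $i$ and $i+1$ is a leaf; comparing the two configurations, every vertex has the same descendant set in $T$ and $T'$ except the peak vertex $w$, which loses (respectively gains) exactly the leaf between $i$ and $i+1$; summing over all vertices gives the claimed change of $\pm 1$.''
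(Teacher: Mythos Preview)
Your approach is essentially the paper's: both compare the two configurations in Figure~\ref{fig:blob1} and argue that the change in $des_T$ comes entirely from the three ``core'' vertices, since every vertex inside the blobs $a,b,c,d,e$ keeps its descendant set.  The paper's one-line count is that the length-two path contributes $2+1+0=3$ internally while the peak contributes $2+0+0=2$.

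Your vertex-by-vertex bookkeeping has a real gap, though.  The claim that only $w$ changes count, and by exactly one, fails when $c$ or $d$ is nonempty.  In the path configuration the middle vertex $w$ has both $c$ and $d$ hanging below it (in addition to the leaf); in the peak configuration those two subtrees are split between the two children of the top vertex.  Under any identification of $w$ with one of those children, $w$ loses not just the leaf but an entire blob, while the former leaf---now the other child---gains the other blob.  The net change is still $-1$, but it is not localized at a single vertex.  You correctly flagged this re-attachment as the delicate step, but the ``short case check'' you propose would not confirm your assertion; it would reveal exactly this spreading.  The clean fix is to abandon the vertex identification and compute, as the paper does, the aggregate descendant count over the three core vertices in each configuration, observing that the blob contributions ($|b|+2|c|+2|d|+|e|$ on each side) cancel.
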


\begin{proof}
Consider again the schematic in Figure~\ref{fig:blob1}.  The number of descendants of the root as well as all vertices in the subtrees $a$, $b$, $c$, $d$, $e$ remains the same after each $s_{i}$-local move.  However the length-two path to the left has a total of three descendants while the peak to the right has a total of only two.
\end{proof}

The previous proofs were similar in part because they turn out to count the same quantities, as we prove next.

\begin{proposition}
Let $T$ be a plane tree with root $v_0$. The total distance equals the total number of descendants, namely 
\[d_T = des_T\]
\end{proposition}

\begin{proof}
Vertex $v$ of plane tree $T$ has distance $k$ from the root exactly when the unique path between $v$ and the root has $k+1$ vertices on it.  The $k$ vertices on this path other than $v$ are precisely the vertices in $T$ with $v$ as a descendant.  Thus each vertex $v$ contributes exactly $k$ to $d_T$ and exactly $k$ to $des_T$.
\end{proof}

\begin{remark} 
The notions of total distance and of total number of descendants can be useful in different contexts.  For one example, see the proof of Proposition \ref{proposition: max/min elements}. For another example, note that each descendant in a plane tree corresponds to a nesting of arcs in the associated noncrossing matching.  Thus the total number of descendants in a plane tree corresponds to the total number of nestings within a noncrossing matching. (We do not discuss noncrossing matchings in detail in this manuscript; for more, see e.g. \cite{Rus11, RusTym11}.)
\end{remark}

The next proposition is a direct result of the previous propositions.

\begin{proposition}\label{proposition: ranked poset}
Both total distance and total number of descendants partition the vertices of $G^A$ into the same subsets of plane trees.  

Direct the graph $G^A$ according to the rule that each edge is directed $T \rightarrow T'$ if $T'$ is obtained from $T$ by a local move of type \eqref{type A local move}.  This turns $G_A$ into a graded poset.  Moreover we can impose a rank function $\rho(T)=d_T$ on this graded poset, whose ranks are characterized by the subsets of plane trees with total distance $k$ (respectively total number of descendants $k$).
\end{proposition}

\begin{proof}
The first claim is an immediate corollary of the fact that $d_T=des_T$ for each plane tree $T$.  

The directed graph $G^A$ is acyclic, and thus a poset, because if $T_1 \rightarrow T_2 \rightarrow \cdots \rightarrow T_k$ is any directed path then $d_{T_1} > d_{T_2} > \cdots > d_{T_k}$ and so the endpoint cannot coincide with the initial point of the path.

Finally a function is a rank function if the following two conditions are met:
\begin{enumerate}
\item The function is compatible with the partial order, namely if there is a path $T_1 \rightarrow T_2 \rightarrow \cdots \rightarrow T_k$ then $\rho(T_1) > \rho(T_k)$.  We just confirmed this for total distance (respectively total number of descendants).
\item If $T_1 \rightarrow T_2$ is an edge in the graph then $\rho(T_1)=\rho(T_2)+1$.  This is the content of Proposition \ref{proposition: total distance} (respectively Proposition \ref{proposition: total number of descendants} for total number of descendants).
\end{enumerate}
The final claim follows by definition of the rank function. 
\end{proof}

\begin{remark}\label{remark: not unimodal}
The graph $G^A$ does not satisfy the same kind of symmetries as the graph for all local moves does.  For instance Heitsch proved that the number of plane trees of rank $k$ agrees with those of rank $n-k+1$ for each $k=1,\ldots,n$.  That is clearly false here:  for instance the sequence of the number of tableaux of shape $(3,3)$ of each rank in increasing order is $(1,2,1,1)$ as shown in Figure~\ref{fig:posetA3}. (Note too that this is not the same rank function that Heitsch uses, as our graded poset has fewer edges than hers.) 
\end{remark}

\begin{figure}[H]
\begin{tikzpicture}[scale = 0.5, every node/.style={scale=0.6}]
\node (lattice) {
\begin{tabular}{c c c c c}
& 
& 
$ \begin{gathered}
\begin{tikzpicture}[level distance=1cm,
level 1/.style={sibling distance=1.5cm},
level 2/.style={sibling distance=1cm}]
\tikzstyle{every node}=[circle, draw, scale=.8, inner sep=2pt]
\node[inner sep=2pt] (root) {}
    child {
    node[fill] {} 
    	edge from parent 
	node[sloped, ellipse, above, draw=none] {\tiny $1$}
	node[sloped, ellipse, below, draw=none]  {\tiny $2$}  
    }
    child {
    node[fill] {}
    	edge from parent 
	node[left, draw=none] {\tiny $3$}
	node[right, draw=none]  {\tiny $4$}  
    }
    child {
    node[fill] {}
    	edge from parent 
	node[sloped, ellipse, below, draw=none] {\tiny $5$}
	node[sloped, ellipse, above, draw=none]  {\tiny $6$}  
    };
\end{tikzpicture} \\
\tiny
\begin{tabular}{ | c | c | c | }
  \hline
  $1$ & $3$ & 5 \\ \hline
  $2$ & $4$ & 6 \\ \hline
\end{tabular}
\end{gathered} $
\vspace{10pt}
& 
& 
\\
& 
$ \begin{gathered}
\begin{tikzpicture}
  [scale = 1.5]
  \node (n1) at (0, 0) {};
  \node (n2) at (1, 1) {};
  \draw[-] (n1) -- (n2) node[sloped, midway, above] {\small $s_{2}$};
\end{tikzpicture}
\end{gathered} $
\vspace{10pt}
& 
& 
$ \begin{gathered}
\begin{tikzpicture}
  [scale = 1.5]
  \node (n1) at (0, 0) {};
  \node (n2) at (1, -1) {}; 
  \draw[-] (n1) -- (n2) node[sloped, midway, above] {\small $s_{4}$};
\end{tikzpicture}
\end{gathered} $
\vspace{10pt}
& 
\\
$ \begin{gathered}
\begin{tikzpicture}[level distance=1cm,
level 1/.style={sibling distance=2cm},
level 2/.style={sibling distance=1cm}]
\tikzstyle{every node}=[circle, draw, scale=.8, inner sep=2pt]
\node[inner sep=2pt] (Root) {}
    child {
    node[fill] {}
    child { 
    node[fill] {} 
	edge from parent 
	node[left, draw=none] {\tiny $2$}
	node[right, draw=none]  {\tiny $3$}    
    }
	edge from parent 
	node[sloped, ellipse, above, draw=none] {\tiny $1$}
	node[sloped, ellipse, below, draw=none]  {\tiny $4$}  
	}
child {
    node[fill] {}
    	edge from parent 
	node[sloped, ellipse, below, draw=none] {\tiny $5$}
	node[sloped, ellipse, above, draw=none]  {\tiny $6$}  
};
\end{tikzpicture} \\
\tiny
\begin{tabular}{ | c | c | c | }
  \hline
  $1$ & $2$ & 5 \\ \hline
  $3$ & $4$ & 6 \\ \hline
\end{tabular}
\end{gathered} $
\vspace{10pt}
& 
& 
& 
& 
$ \begin{gathered}
\begin{tikzpicture}[level distance=1cm,
level 1/.style={sibling distance=2cm},
level 2/.style={sibling distance=1cm}]
\tikzstyle{every node}=[circle, draw, scale=.8, inner sep=2pt]
\node[inner sep=2pt] (Root) {}
child {
    node[fill] {}
    	edge from parent 
	node[sloped, ellipse, above, draw=none] {\tiny $1$}
	node[sloped, ellipse, below, draw=none]  {\tiny $2$}  
}
    child {
    node[fill] {}
    child { 
    node[fill] {} 
	edge from parent 
	node[left, draw=none] {\tiny $4$}
	node[right, draw=none]  {\tiny $5$}    
    }
	edge from parent 
	node[sloped, ellipse, below, draw=none] {\tiny $3$}
	node[sloped, ellipse, above, draw=none]  {\tiny $6$}  
	};
\end{tikzpicture} \\
\tiny
\begin{tabular}{ | c | c | c | }
  \hline
  $1$ & $3$ & 4 \\ \hline
  $2$ & $5$ & 6 \\ \hline
\end{tabular}
\end{gathered} $
\vspace{10pt}
\\
& 
$ \begin{gathered}
\begin{tikzpicture}
  [scale = 1.5]
  \node (n1) at (0, 0) {};
  \node (n2) at (1, -1) {};
  \draw[-] (n1) -- (n2) node[sloped, midway, above] {\small $s_{4}$};
\end{tikzpicture}
\end{gathered} $
\vspace{10pt}
& 
& 
$ \begin{gathered}
\begin{tikzpicture}
  [scale = 1.5]
  \node (n1) at (0, 0) {};
  \node (n2) at (1, 1) {};
  \draw[-] (n1) -- (n2) node[sloped, midway, above] {\small $s_{2}$};
\end{tikzpicture}
\end{gathered} $
\vspace{10pt}
& 
\\  
& 
& 
$ \begin{gathered}
\begin{tikzpicture}[level distance=1cm,
level 1/.style={sibling distance=1cm},
level 2/.style={sibling distance=2cm}]
\tikzstyle{every node}=[circle, draw, scale=.8, inner sep=2pt]
\node[inner sep=2pt] (Root) {}
child {
    child {
    node[fill] {}
	edge from parent 
	node[sloped, ellipse, above, draw=none]  {\tiny $2$}
	node[sloped, ellipse, below, draw=none]  {\tiny $3$}  
}	
    child { 
    node[fill] {} 
	edge from parent 
	node[sloped, ellipse, above, draw=none] {\tiny $5$}
	node[sloped, ellipse, below, draw=none]  {\tiny $4$}    
    }
	    node[fill] {}
	edge from parent 
	node[left, draw=none] {\tiny $1$}
	node[right, draw=none]  {\tiny $6$}    
};
\end{tikzpicture} \\
\tiny
\begin{tabular}{ | c | c | c | }
  \hline
  $1$ & $2$ & 4 \\ \hline
  $3$ & $5$ & 6 \\ \hline
\end{tabular}
\end{gathered} $
\vspace{10pt}
& 
& 
\\  
& 
& 
$ \begin{gathered}
\begin{tikzpicture}
  [scale = 1.5]
  \node (n1) at (0, 0) {};
  \node (n2) at (0, -1) {};
  \node [overlay] at (0.15, -0.5) {\small $s_{3}$};
  \draw[-] (n1) -- (n2);
\end{tikzpicture}
\end{gathered} $
\vspace{10pt}
& 
& 
\\ 
& 
& 
$ \begin{gathered}
\begin{tikzpicture}[level distance=1cm,
level 1/.style={sibling distance=1cm},
level 2/.style={sibling distance=1cm}]
level 3/.style={sibling distance=1cm}]
\tikzstyle{every node}=[circle, draw, scale=.8, inner sep=2pt]
\node[inner sep=2pt] (Root) {}
child {
    child {
    node[fill] {}
    child { 
    node[fill] {} 
	edge from parent 
	node[left, draw=none] {\tiny $3$}
	node[right, draw=none]  {\tiny $4$}    
    }
	edge from parent 
	node[left, draw=none] {\tiny $2$}
	node[right, draw=none]  {\tiny $5$}  
}	
	    node[fill] {}
	edge from parent 
	node[left, draw=none] {\tiny $1$}
	node[right, draw=none]  {\tiny $6$}    
};
\end{tikzpicture} \\
\tiny
\begin{tabular}{ | c | c | c | }
  \hline
  $1$ & $2$ & 3 \\ \hline
  $4$ & $5$ & 6 \\ \hline
\end{tabular}
\end{gathered} $
\vspace{10pt}
\end{tabular}
};
\draw [<-] ([xshift = -1.25cm] lattice.north west) -- ([xshift = -1.25cm] lattice.south west) node[sloped, ellipse, midway, above, rotate=180, draw=none] {\small Type (1) local moves};
\draw [->] ([xshift = 1.25cm] lattice.north east) -- ([xshift = 1.25cm] lattice.south east) node[sloped, ellipse, midway, above, draw=none] {\small Type (2) local moves};
\end{tikzpicture}
\caption{Graded poset obtained from $G^A$ when $n = 3$ \label{fig:posetA3}}
\end{figure}
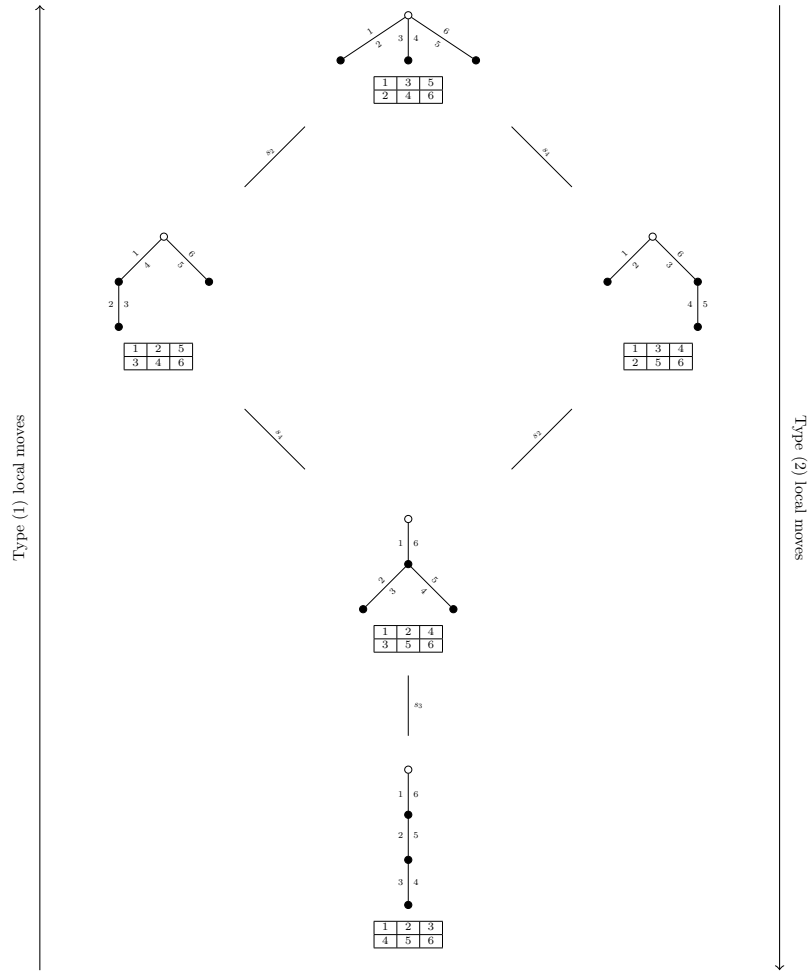

However we can prove the following.

\begin{proposition}  \label{proposition: max/min elements}
There is a unique element of maximal rank and a unique element of minimal rank.
\end{proposition}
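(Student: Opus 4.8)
The plan is to work in the plane tree model and to establish the combinatorial fact that a plane tree on $n$ edges admits no $s_i$-local move of type~\eqref{type A local move} precisely when it is the star (a root with $n$ pendant leaves, Figure~\ref{fig:subfigmin}), and admits no $s_i$-local move of type~\eqref{type B local move} precisely when it is the path (the plane tree in which no vertex has more than one child). Since a move of type~\eqref{type A local move} strictly lowers the rank of $G^A$ and a move of type~\eqref{type B local move} strictly raises it (Proposition~\ref{proposition: ranked poset}), this yields the proposition at once: a tree of minimal rank has no rank-lowering edge, hence must be the star, and the star does lie in $G^A$, so the star is the unique vertex of minimal rank; dually the path is the unique vertex of maximal rank.

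The first step is to translate the existence of an $s_i$-local move into local structure. By Theorem~\ref{theorem: graph local moves} an $s_i$-local move is available at $T$ exactly when the half-edges $i, i+1$ sit in one of the two configurations of Figure~\ref{fig:diffrow}; comparing these with Definition~\ref{definition: local moves}, the leaf configuration of Figure~\ref{fig:diffrowleaf}---a leaf $e(i,i+1)$ hanging below an edge $e(j,j')$ with $j<i<i+1<j'$---gives a move of type~\eqref{type A local move}, while the peak of Figure~\ref{fig:diffrowV}---a vertex with two consecutive children, joined to it by edges $e(j,i)$ and $e(i+1,j')$ with $j<i<i+1<j'$---gives a move of type~\eqref{type B local move}. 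Every leaf of $T$ not incident to the root lies below the edge joining its parent to the grandparent, and the labels there are forced to satisfy $j<i<i+1<j'$, so such a leaf supplies a type-\eqref{type A local move} $s_i$-local move; a leaf incident to the root is the same-column case of Lemma~\ref{lemma: edges for i,i+1} and supplies no $s_i$-local move. Likewise any vertex with two or more children has two consecutive children, hence supplies a type-\eqref{type B local move} $s_i$-local move. (In both cases $s_iY\neq Y$, as $i$ and $i+1$ lie in opposite rows of $Y$.)

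It follows that $T$ admits no type-\eqref{type A local move} $s_i$-local move if and only if every leaf of $T$ is a child of the root. But if every leaf is a child of the root, then no child of the root may have a child of its own---its subtree would contain a leaf at depth at least $2$---so all $n$ edges meet the root and $T$ is the star. Likewise $T$ admits no type-\eqref{type B local move} $s_i$-local move if and only if no vertex of $T$ has more than one child, i.e.\ $T$ is the path. In each case the tree in question is clearly unique, which, together with the rank-monotonicity recalled in the first paragraph, completes the proof.

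The step that takes the most care is the translation in the second paragraph. One must keep $s_i$-local moves distinct from arbitrary local moves (cf.\ Remark~\ref{remark: not all local moves are perms}), use Lemma~\ref{lemma: edges for i,i+1} to be sure the leaf of Figure~\ref{fig:diffrowleaf} and the peak of Figure~\ref{fig:diffrowV} are the only local patterns that produce $s_i$-local moves, and correctly assign these two patterns to type~\eqref{type A local move} and type~\eqref{type B local move} respectively. Once that bookkeeping is in place, identifying the star and the path and verifying their uniqueness is routine.
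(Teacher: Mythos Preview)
Your argument is correct but proceeds quite differently from the paper's. The paper works directly with the rank functions $des_T$ and $d_T$: it bounds $des_T$ from below by noting that every non-root vertex is already a descendant of the root, with equality forcing the star, and bounds $d_T$ from above by noting that a connected tree on $n+1$ vertices can have at most one vertex at each distance $0,1,\ldots,n$, with equality forcing the path. You instead characterize the extrema \emph{locally}: a vertex of minimal rank can admit no rank-lowering $s_i$-local move, and via Lemma~\ref{lemma: edges for i,i+1} and Theorem~\ref{theorem: graph local moves} you show that this forces every leaf to be adjacent to the root, hence forces the star; dually the absence of any rank-raising move forces every vertex to have at most one child, hence the path. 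The paper's approach has the advantage of producing the actual extremal rank values, which it uses immediately in the subsequent corollary to count the number of ranks. Your approach never computes a rank at all and relies only on the graded-poset structure from Proposition~\ref{proposition: ranked poset} together with finiteness; it would transfer unchanged to any connected graded graph whose covering relations are the $s_i$-local moves, independent of which explicit rank function one prefers.
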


\begin{proof}
Consider the graph $G^A$ whose vertex set is the set $\mathcal{T}_n$ of plane trees with $n$ edges.  If $T$ is a plane tree in $\mathcal{T}_n$ then its root must have $n$ descendants since any other vertex in the graph is a descendant of the root.  So the minimal total number of descendants is $n$.  This is achieved by the star graph in Figure~\ref{fig:subfigmin}.

The plane tree $T$ is connected so there is at least one vertex of each possible distance from the root.  The path graph in Figure~\ref{fig:subfigmax} has just one vertex at each distance from the root and therefore maximizes the total distance.
\end{proof}

\begin{figure}[H]
    \begin{subfigure}[b]{0.49\textwidth}
\begin{tikzpicture}[level distance=1.5cm,
level 1/.style={sibling distance=2cm},
level 2/.style={sibling distance=1cm}]
\tikzstyle{every node}=[circle, draw, scale=.8, inner sep=2pt]
\node (root) [circle, draw] {}
  child {node[fill] (B) {}
    	edge from parent 
	node[sloped, ellipse, above, draw=none] {\tiny$1$}
	node[sloped, ellipse, below, draw=none]  {\tiny$2$}}  
  child {node[fill] (1) {}
     	edge from parent 
	node[sloped, ellipse, above, draw=none] {\tiny$3$}
	node[sloped, ellipse, below, draw=none]  {\tiny$4$}}  
  child {node[fill] (2) {}
     	edge from parent 
	node[sloped, ellipse, above, draw=none] {\tiny$2n - 2$}
	node[sloped, ellipse, below, draw=none]  {\tiny$2n - 3$}}  
  child {node[fill] (A) {}
     	edge from parent 
	node[sloped, ellipse, above, draw=none] {\tiny$2n$}
	node[sloped, ellipse, below, draw=none]  {\tiny$2n - 1$}} ;
\node [draw=none, anchor=center, yshift=-1.85cm] {\LARGE$\ldots$}; 
\draw [decorate,decoration={brace,amplitude=2mm}] ($(A.west)+(1mm,-3mm)$) -- ($(B.east)+(-1mm,-3mm)$) ; 
\node [draw=none, anchor=center, yshift=-3cm] {\small $des_T = n$} ; 
\end{tikzpicture} \\ \\
\centering
\tiny
\begin{tabular}{ | c | c | c | c | c |}
  \hline
  $1$ & $3$ &  $\cdots$ & $2n - 3$ & $2n - 1$ \\ \hline
  $2$ & $4$ & $\cdots$ & $2n - 2$ & $2n$ \\ \hline
\end{tabular} \vspace{3mm}
        \caption{Minimal tree (star graph) \label{fig:subfigmin}}
    \end{subfigure} \hfill
\begin{subfigure}[b]{0.49\textwidth}
\qquad \qquad \qquad
\begin{tikzpicture} [level distance=1cm,
level 1/.style={sibling distance=2cm},
level 2/.style={sibling distance=1cm}]
\tikzstyle{every node}=[circle, draw, scale=.8, inner sep=2pt]
\node (Root) {}
    child { node[fill] {}
    child { node[fill] {} 
    child { node[fill] {}
    child { node[fill] {} 
    child { node[fill] {}     
    	edge from parent 
	node[left, draw=none] {\tiny$n$}
	node[right,draw=none]  {\tiny$n + 1$}    
    }
    	edge from parent 
	node[left, draw=none] {\tiny$n - 1$}
	node[right,draw=none]  {\tiny$n + 2$}    
    }
    	edge from parent[draw=none] 
    }
	edge from parent 
	node[left, draw=none] {\tiny$2$}
	node[right,draw=none]  {\tiny$2n - 1$}    
    }
	edge from parent 
	node[left, draw=none] {\tiny$1$}
	node[right, draw=none]  {\tiny$2n$}
};
\node [draw=none, anchor=center, yshift=-3cm] {\LARGE$\vdots$} ; 
\draw [decorate,decoration={brace,amplitude=2mm}] (0.85, 0) -- (0.85, -5) ; 
\node [draw=none, anchor=center, xshift=2.5cm, yshift=-3.1cm] {\small $d_T = \binom{n+1}{2}$}; 
\end{tikzpicture} \\ \\
\centering
\tiny
\begin{tabular}{ | c | c | c | c | c |}
  \hline
  $1$ & $2$ &  $\cdots$ & $n - 1$ & $n$ \\ \hline
  $n + 1$ & $n + 2$ & $\cdots$ & $2n - 1$ & $2n$ \\ \hline
\end{tabular} \vspace{3mm}
        \caption{Maximal tree (path graph) \label{fig:subfigmax}}   
    \end{subfigure}
\caption{Minimal and maximal trees with associated Young tableaux \label{minmax}}
\end{figure}
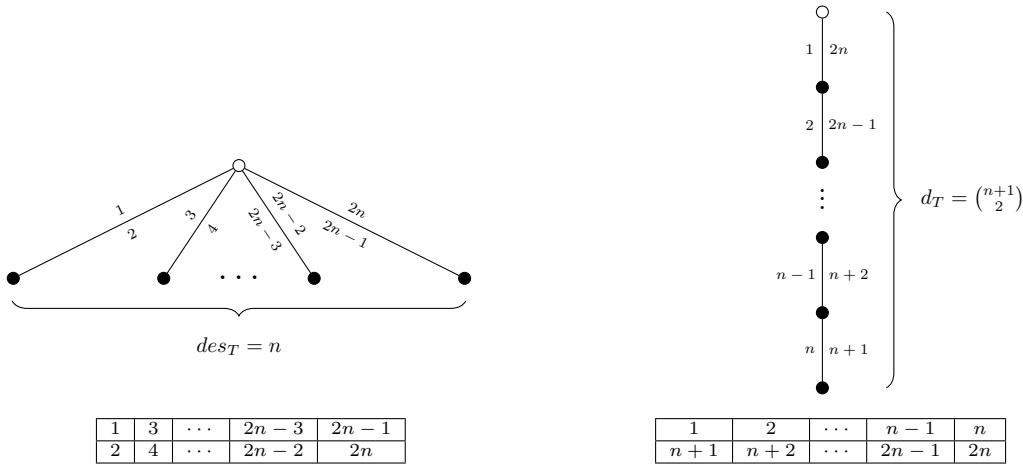

\begin{corollary}
For the partition $(n, n)$ the number of ranks in the graded poset obtained from $G^A$ and ranked by the total distance function $d_T$ is $\binom{n+1}{2} - n+1$.
\end{corollary}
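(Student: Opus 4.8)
The plan is to count the ranks by locating the two extreme values of the rank function and then showing every value in between is attained.

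First I would record the structural fact supplied by the two distance propositions above: every edge of $G^A$ changes $d_T$ by exactly one, decreasing it across a local move of type~\eqref{type A local move} and increasing it across a local move of type~\eqref{type B local move}. Combined with Proposition~\ref{proposition: connected}, this forces the set of values taken by $d_T$ on the vertices of $G^A$ to be an interval of consecutive integers: given two plane trees, choose a path joining them in $G^A$, and observe that $d_T$ moves by unit steps along that path, hence realizes every integer strictly between its two endpoint values as well. Therefore the number of ranks of the associated graded poset is exactly $\max_T d_T - \min_T d_T + 1$, and the problem reduces to computing these two numbers.

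Next I would identify the extremes using Proposition~\ref{proposition: max/min elements}. The unique minimum is the star graph of Figure~\ref{fig:subfigmin}: each of its non-root vertices is a leaf at distance $1$ from the root, so its total distance is $n$. The unique maximum is the path graph of Figure~\ref{fig:subfigmax}: it has exactly one vertex at each of the distances $0, 1, \dots, n$ from the root, so its total distance is $0 + 1 + \cdots + n = \binom{n+1}{2}$. Substituting into $\max_T d_T - \min_T d_T + 1$ gives $\binom{n+1}{2} - n + 1$, which is the claim.

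I do not expect a genuine obstacle here: the corollary is essentially bookkeeping on top of facts already in hand --- that $G^A$ is connected, that $d_T$ is a valid rank function, that $d_T$ changes by unit steps across each edge, and that the minimizing and maximizing trees are the star and the path. The one point that needs a moment of care is the ``interval of consecutive integers'' step, since it genuinely uses both connectedness and the unit-step property; once that is established the arithmetic simplification $\binom{n+1}{2} - n + 1$ is immediate. (Alternatively one could track $d_T$ through the inductive path construction in the proof of Proposition~\ref{proposition: connected}, but invoking connectedness abstractly is cleaner.)
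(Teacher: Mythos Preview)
Your proof is correct and follows essentially the same approach as the paper: compute the extreme values of $d_T$ at the star and the path, then invoke connectedness together with the unit-step property of $d_T$ across edges to conclude that every intermediate value is realized. One small point worth noting in the comparison: you correctly compute the star's total distance as $n$ (there are $n$ leaves, each at distance $1$), whereas the paper's proof writes $n-1$---a slip, since with that value the rank count would come out to $\binom{n+1}{2}-n+2$ rather than the stated $\binom{n+1}{2}-n+1$; your arithmetic is the one consistent with the corollary and with the example in Remark~\ref{remark: not unimodal}.
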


\begin{proof}
The total distance of the path graph is the binomial coefficient $\binom{n+1}{2}$.  The total distance of the star graph is $n$.  There is at least one plane tree of each rank between these because $G^A$ is connected and each edge changes rank by exactly one.
\end{proof}

Again we close with an open question.

\begin{question}
Is the rank sequence of $G^A$ unimodal for every $n$?
\end{question}

\section{The graph of $s_i$-Local moves in type $C$}\label{section: type C}

In our description of $s_i$-local moves so far, we relied on an analogy with the generators of the symmetric group.  We now extend the analogy to define maps $s_i^C$ corresponding to the generators of the Weyl group of type $C$.  Intuitively the Weyl group of type $C$ plays the same role for the complex symplectic group $Sp(2n,\mathbb{C})$ that the permutation matrices play for $n \times n$ invertible matrices $GL(n,\mathbb{C})$.  We will represent the Weyl group of type $C$ as a subgroup of the permutations in $S_{2n}$ using generators that we describe below.  

In this section we show that we can easily define maps $s_i^C$ on the standard tableaux of shape $(n,n)$ even when there are no analogous local moves on the corresponding plane trees.  Nonetheless, the geometry of the plane trees is the best way to describe key properties of these maps.  More precisely we prove that restricting to type $C$ $s_i$-local moves identifies symmetry within the plane trees.  The main theorem of this section shows that within the graph whose vertices are plane trees and whose edges are type $C$ $s_i$-local moves, there are precisely two connected components: one composed of {\em symmetric} plane trees and one composed of {\em asymmetric} plane trees.

We define functions analogous to the maps $s_i$ for type $C$ instead of type $A$.  The reader who is not familiar with Weyl groups can take this as a definition of the Weyl group of type $C$.  Like in our earlier treatment, the maps $s_i$ and $s_i^C$ are both permutations in $S_{2n}$.  However note that in type $A$ we have maps $s_i$ for each $i \in \{1,2,\ldots,2n-1\}$ while in type $C$ we only have $s_i^C$ for $i \in \{1,2,\ldots,n\}$.

\begin{definition}
The maps of type $C$ are the involutions on standard tableaux defined by:
\begin{center}
$s_1^C=s_1s_{2n-1}$ corresponding to the reflection $(1,2)(2n-1,2n)$\\
$s_2^C=s_2s_{2n-2}$ corresponding to the reflection $(2,3)(2n-2,2n-1)$\\
\qquad \vdots \\
 $s_{n-1}^C=s_{n-1}s_{n+1}$ corresponding to the reflection $(n-1,n)(n+1,n+2)$\\
 $s_n^C=s_n$ corresponding to the reflection $(n,n+1)$
\end{center}
Using the bijection $\phi: \mathcal{T}_n \rightarrow \left\{\textup{standard Young tableaux of size }(n,n)\right\}$ we also define maps $s_i^C$ on plane trees according to the rule
\[s_i^C(T)=\phi^{-1}(s_i^C(\phi(T))\]
\end{definition}

Generally the simple reflections of type $C$ exchange disjoint pairs of integers according to the product $s_i s_{2n-i}$ of type $A$ reflections.  However note that $s_n$ exchanges just the two integers $n$ and $n+1$.  It is the only simple reflection of type $C$ that exchanges integers between the sets $\{1,2,\ldots,n\}$ and $\{n+1,n+2,\ldots,2n\}$.

\begin{remark}
Note that while we use terminology from earlier in the paper, the maps $s_i^C$ are no longer local moves in the strict sense.  Except for the case when $i=n$ the maps $s_i^C = s_{i}s_{2n-i}$ corresponding to the reflections $(i,i+1)(2n-i,2n-i+1)$ are in fact {\em pairs} of $s_i$-local moves of type $A$. We can perform a pair of $s_i$-local moves on a standard tableau $Y$ simultaneously because the pairs of integers are disjoint: if $i$ and $i+1$ are in the same row or column then $s_{i}$ does nothing; otherwise $s_{i}$ exchanges the positions of $i$ and $i+1$ leaving all the other numbers in their original positions. The same dynamic holds for $s_{2n-i}$ with respect to $2n-i$ and $2n-i+1$. So the standard tableau $s_i^C(Y)$ is always defined.

Our definition for the plane tree $s^C_{i}(T)$ uses the action on the corresponding tableau $\phi(T)$.  This is because often a pair of $s_i$-local moves that would act on a plane tree is {\em not defined} on that plane tree.  Figure~\ref{fig: degenerate local move} provides an example in which the map $s^{C}_{2}$ involves one non-trivial $s_2$-local move and one trivial $s_6$-local move. The heuristic for determining $s^C_i(T)$ directly is to perform all of the local moves $s_i$ and $s_{2n-i}$ that are non-trivial.

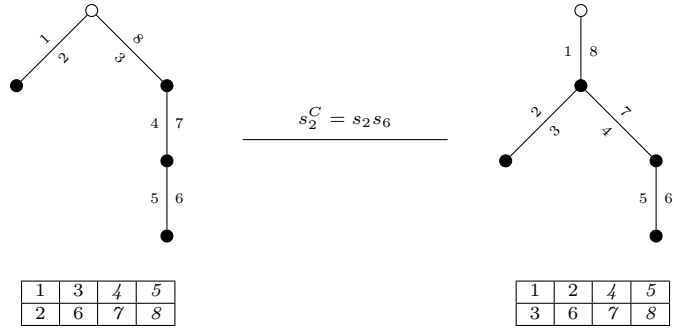
\begin{figure}[H]
\begin{tabular}{ c c c }
$ \begin{gathered}
    \begin{tikzpicture}[sloped, 
    level distance=1cm,
    level 1/.style={sibling distance=2cm},
    level 2/.style={sibling distance=1cm}]
    \tikzstyle{every node}=[circle, draw, scale=.8, inner sep=2pt, minimum size=0]
    \node[inner sep=2pt] (Root)[] {}
    child { 
    node[fill] {} 
    	edge from parent 
	node[ellipse, above, draw=none] {\tiny $1$}
	node[ellipse, below, draw=none]  {\tiny $2$}  
    }
    child {
    node[fill] {}
    child {
    node[fill] {}
    child{
    node[fill] {}
    	edge from parent 
	node[ellipse, left, draw=none, rotate=90] {\tiny $5$}
	node[ellipse, right, draw=none, rotate=90]  {\tiny $6$} 
	}
	edge from parent 
	node[ellipse, left, draw=none, rotate=90] {\tiny $4$}
	node[ellipse, right, draw=none, rotate=90]  {\tiny $7$}	
	}
	edge from parent
	node[ellipse, below, draw=none] {\tiny $3$}
	node[ellipse, above, draw=none]  {\tiny $8$}
};
\end{tikzpicture}
\end{gathered} $
& 
$ \begin{gathered}
\begin{tikzpicture}
  [scale = 1.5]
  \node (n1) at (0, 0) {};
  \node (n2) at (2, 0) {};
  \draw[-] (n1) -- (n2) node[midway, above] {\tiny $s^{C}_{2} = s_{2}s_{6}$};
\end{tikzpicture}
\end{gathered} $
& 
$ \begin{gathered}
\begin{tikzpicture}[level distance=1cm,
level 1/.style={sibling distance=1cm},
level 2/.style={sibling distance=2cm}]
\tikzstyle{every node}=[circle, draw, scale=.8, inner sep=2pt]
\vspace{10pt}
\node[inner sep=2pt] (Root) {}
child {
    child {
    node[fill] {}
	edge from parent 
	node[sloped, ellipse, above, draw=none]  {\tiny $2$}
	node[sloped, ellipse, below, draw=none]  {\tiny $3$}  
}	
    child { 
    node[fill] {} 
         child{
         node[fill]{}
	edge from parent 
	node[left, draw=none] {\tiny $5$}
	node[right, draw=none]  {\tiny $6$}  
	}
	edge from parent 
	node[sloped, ellipse, above, draw=none] {\tiny $7$}
	node[sloped, ellipse, below, draw=none]  {\tiny $4$}   
    }
	node[fill] {}
	edge from parent 
	node[left, draw=none] {\tiny $1$}
	node[right, draw=none]  {\tiny $8$}    
};
\end{tikzpicture} 
\end{gathered} $
\vspace{10pt}
\\
\tiny{
\begin{tabular}{ | c | c | c | c | }
  \hline
  $1$ & $3$ & 4 & 5 \\ \hline
  $2$ & $6$ & 7 & 8 \\ \hline
\end{tabular}
}
& 
& 
\tiny{
\begin{tabular}{ | c | c | c | c | }
  \hline
  $1$ & $2$ & 4 & 5 \\ \hline
  $3$ & $6$ & 7 & 8 \\ \hline
\end{tabular}
}
\end{tabular}
\caption{Map $s^{C}_{2}$ involving non-trivial $s_2$-local move and trivial $s_6$-local move \label{fig: degenerate local move}}
\end{figure}

We stress that even though it appears unnatural to define local moves of type $C$ on plane trees (given that the constituent local moves of type $A$ are not necessarily well-defined), the maps $s_i^C$ characterize key geometric properties of the plane trees.  Indeed we think it is a theme of this field that different characterizations of standard tableaux (plane trees, noncrossing matchings, etc.) provide valuable and often complementary information.  
\end{remark}

The maps $s_i^C$ define a graph $G^C$ in the same way that the maps $s_i$ defined a graph $G^A$.

\begin{definition}
The graph $G^C$ is the graph whose vertices are plane trees. An edge connects plane trees $T$ and $T'$ precisely when $T'=s_i^C(T)$ for a map $s_i^C$.  We call $G^C$ the graph of plane trees under $s_i^C$-local moves (read {\em $s_i$-local moves of type $C$}).
\end{definition}

The following definition formalizes our notion of symmetric and asymmetric plane trees.

\begin{definition}
Let $T$ be a plane tree. We say that $T$ is symmetric if and only if for each edge $e(i,j)$ in $T$ the {\em mirror image} $e(2n-j+1, 2n-i+1)$ is also an edge in $T$. A plane tree is asymmetric if it is not symmetric.
\end{definition}

We will prove that the graph of plane trees $G^C$ under the $s_i^C$-local moves has two connected components: one consisting of symmetric plane trees and one consisting of asymmetric plane trees.  Our proof uses several steps.  First we show that no connected component contains both a symmetric plane tree and an asymmetric plane tree.

\begin{lemma}
Each connected component of $G^C$ consists either entirely of symmetric plane trees or entirely of asymmetric plane trees.
\end{lemma}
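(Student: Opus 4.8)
The plan is to show that the property of being symmetric is invariant under each map $s_i^C$; since $G^C$ is built from these maps, any two plane trees in the same connected component differ by a sequence of $s_i^C$-moves, so either both are symmetric or neither is. It therefore suffices to prove: if $T$ is symmetric then $s_i^C T$ is symmetric, and if $T$ is asymmetric then $s_i^C T$ is asymmetric. Because each $s_i^C$ is an involution, the second statement follows from the first: if $s_i^C T$ were symmetric while $T$ is asymmetric, then applying $s_i^C$ again would send a symmetric tree to an asymmetric one, contradicting the first statement. So the whole lemma reduces to the single implication ``$T$ symmetric $\implies s_i^C T$ symmetric.''

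To prove that implication, I would work directly from the definition of symmetry, which is phrased in terms of the involution $\iota$ on half-edge labels sending $k \mapsto 2n-k+1$: $T$ is symmetric iff $e(i,j)\in T \iff e(\iota(j),\iota(i))\in T$. The key observation is that each type-C reflection, viewed as a permutation of $\{1,\dots,2n\}$, commutes with $\iota$. Indeed $s_n^C = (n,n+1)$ and $\iota(n)=n+1$, $\iota(n+1)=n$, so $s_n^C$ is exactly the transposition of the pair $\{n,n+1\}$ that $\iota$ swaps; and for $i<n$, $s_i^C = (i,i+1)(2n-i,2n-i+1)$ pairs up the block $\{i,i+1\}$ with its $\iota$-image $\{2n-i,2n-i+1\}$, so conjugating by $\iota$ fixes $s_i^C$. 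I would then need to check that when $s_i^C$ acts as a local move on $T$ (rather than as a fixed point), the combinatorial effect on the edge set is precisely: relabel every half-edge $k$ by applying the permutation $s_i^C$, and the result is again a valid plane tree. This is where Theorem~\ref{theorem: graph local moves} and Lemma~\ref{lemma: edges for i,i+1} enter: they tell us exactly which edge-configurations $s_i$ (hence $s_i^C$, as a composition of two such) modifies, and that the modification is the expected ``swap $i \leftrightarrow i+1$ in all half-edge labels.'' Granting that, if $e(i,j)\in T$ then $e(\iota(j),\iota(i)) \in T$ by symmetry of $T$, and applying the relabelling permutation $\sigma = s_i^C$ to both and using $\sigma\iota = \iota\sigma$ gives $e(\sigma(i),\sigma(j)) \in \sigma(T)$ and $e(\sigma\iota(j),\sigma\iota(i)) = e(\iota\sigma(j),\iota\sigma(i))\in\sigma(T)$, which is exactly the symmetry condition for $\sigma(T) = s_i^C T$.

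The main obstacle is the bookkeeping in establishing that $s_i^C$ really does act on plane trees by the uniform relabelling $k \mapsto s_i^C(k)$ of half-edges whenever it is not a fixed point, and that it is a fixed point exactly in the symmetric-respecting cases. One has to be careful: $s_i^C = s_i s_{2n-i}$ is a composition, and a priori one of $s_i$, $s_{2n-i}$ could fix $T$ while the other moves it; I would handle this by the case analysis of Lemma~\ref{lemma: edges for i,i+1}, checking that if $T$ is symmetric then $i,i+1$ are in the same row (resp. column) of $\phi(T)$ if and only if $2n-i, 2n-i+1$ are, so the two constituent moves are ``on'' or ``off'' together, and whichever case occurs, the net effect on half-edge labels is the permutation $s_i^C$ applied uniformly (or the identity). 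A subtlety for $s_n^C$: here $i=n$ and $i+1 = n+1$ are always in different rows of a shape-$(n,n)$ tableau, but they can be in the same column (the root-leaf case of Figure~\ref{fig:samecolumn}); in that case $s_n^C$ fixes $T$, consistent with uniform relabelling since $\iota$ swaps the labels of that root-leaf anyway. Once these cases are dispatched, the commutation $\sigma\iota=\iota\sigma$ does all the remaining work, and the lemma follows.
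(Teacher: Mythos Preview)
Your overall strategy matches the paper's: reduce to showing that each $s_i^C$ preserves symmetry, then use that $s_i^C$ is an involution to get the asymmetric case for free. Where your argument breaks down is the central claim that ``the combinatorial effect on the edge set is precisely: relabel every half-edge $k$ by applying the permutation $s_i^C$.'' This is false. Take $n=3$ and the star tree with edge set $\{e(1,2),\,e(3,4),\,e(5,6)\}$. Applying the permutation $s_2^C = (2,3)(4,5)$ to the labels produces the \emph{crossing} matching $\{e(1,3),\,e(2,5),\,e(4,6)\}$, which is not a plane tree at all, whereas the actual $s_2^C$-local move yields the tree with edges $\{e(1,6),\,e(2,3),\,e(4,5)\}$. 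The point is that swapping $i$ and $i{+}1$ in the tableau $\phi(T)$ swaps which \emph{row} they occupy, i.e.\ their left/right half-edge status, and this is not the same operation as permuting half-edge labels in $T$; the matching changes according to the local move in Figure~\ref{fig:norootv}, not by relabelling. So although the commutation $\sigma\iota=\iota\sigma$ holds as a statement about permutations of $\{1,\dots,2n\}$, it does not transport to the statement $s_i^C(\iota\cdot T)=\iota\cdot(s_i^C T)$ that you need.

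Your idea can be repaired by moving the entire argument to the tableau side. Symmetry of $T$ is equivalent to the following condition on $Y=\phi(T)$: for every $k$, the entries $k$ and $2n{+}1{-}k$ lie in opposite rows (this follows from Definition~\ref{definition: plane trees and tableaux} together with the fact that a noncrossing matching is determined by which points are left endpoints). When $s_i^C$ acts nontrivially it swaps the rows of $\{i,i{+}1\}$ and simultaneously of $\{2n{-}i,2n{-}i{+}1\}$; since $\iota$ interchanges these two pairs, the opposite-rows condition is preserved. Your ``both on or both off'' observation is exactly what is needed to ensure the two constituent swaps happen together. Once corrected this way, your proof is genuinely cleaner than the paper's, which establishes the same implication by an exhaustive four-row table of edge configurations and their mirror images (Table~\ref{tab:mirrorchart}); both arguments have the same architecture, but yours replaces the case tabulation with a one-line symmetry of the row assignment.
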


\begin{proof}
We will show that if $s_i^C$ is a generator of the Weyl group of type $C$ and $T$ is a symmetric plane tree then $s_i^C(T)$ is also symmetric.  It follows that the connected component of $G^C$ containing any symmetric plane tree consists entirely of other symmetric plane trees. Since every tree is either symmetric or asymmetric, it follows further that the connected component of $G^C$ containing any asymmetric plane tree must consist entirely of other asymmetric plane trees.  

Given a subtree $T'$ of symmetric plane tree $T$ we call the edges in $T$ that are symmetric to $T'$ the {\em mirror image} of $T$.   

Consider the half-edges labeled by $i$ and $i+1$.  A priori there are four possibilities: they could both be left-half-edges, they could both be right-half-edges, they could form a leaf, or they could form the interior of a peak.

Table ~\ref{tab:mirrorchart} shows these four possibilities, the mirror image of these possibilities, the $s_i^C$-local move on the original and its mirror image in each case, and the mirror image of the $s_i^C$-local move on the original in each case. Note that in the first two possibilities $i$ and $i + 1$ as well as $2n - i$ and $2n - i +1$ will stay in their respective rows of the corresponding tableaux after the $s_i^C$-local move, which consequently does not alter the either $T'$ or its mirror image. We inspect columns three and five in Table ~\ref{tab:mirrorchart} and observe that they are the same. So if two edges were part of a symmetric tree before we perform an $s_i^C$-local move on them, then they will still be part of a symmetric tree after the $s_i^C$-local move.

\begin{table}[H]
\resizebox{\linewidth}{!}{
\begin{tabular}{ | c || c | c || c | c |}
  \hline
  $T'$ & Mirror image of $T'$ & $s_{i}^C$-local move on mirror image of $T'$ & $s_{i}^C$-local move on $T'$ & Mirror image of $s_{i}^C$-local move on $T'$ \\
    \hline \hline
    &  &  &  & \\
\begin{tikzpicture}[level distance=2cm,
level 1/.style={sibling distance=3.5cm},
level 2/.style={sibling distance=1cm}]
\tikzstyle{every node}=[circle, draw, scale=1, inner sep=2pt]
\vspace{10pt}
\node[fill] (Root) {}
    child {
    node[fill] {}
    child { node[fill] {} 
	edge from parent 
	node[left, draw=none] {$i + 1$}
	node[right, draw=none]  {$j'$}    
    }
	edge from parent 
	node[left, draw=none] {$i$}
	node[right, draw=none]  {$j$}
};
\end{tikzpicture} & 
\begin{tikzpicture}[level distance=2cm,
level 1/.style={sibling distance=3.5cm},
level 2/.style={sibling distance=1cm}]
\tikzstyle{every node}=[circle, draw, scale=1, inner sep=2pt]
\vspace{10pt}
\node[fill] (Root) {}
    child {
    node[fill] {}
    child { node[fill] {} 
	edge from parent 
	node[left, draw=none] {$2n - j' + 1$}
	node[right, draw=none]  {$2n - i$}    
    }
	edge from parent 
	node[left, draw=none] {$2n - j + 1$}
	node[right, draw=none]  {$2n -i + 1$}
};
\end{tikzpicture} & 
\begin{tikzpicture}[level distance=2cm,
level 1/.style={sibling distance=3.5cm},
level 2/.style={sibling distance=1cm}]
\tikzstyle{every node}=[circle, draw, scale=1, inner sep=2pt]
\vspace{10pt}
\node[fill] (Root) {}
    child {
    node[fill] {}
    child { node[fill] {} 
	edge from parent 
	node[left, draw=none] {$2n - j' + 1$}
	node[right, draw=none]  {$2n - i$}    
    }
	edge from parent 
	node[left, draw=none] {$2n - j + 1$}
	node[right, draw=none]  {$2n -i + 1$}
};
\end{tikzpicture} & 
\begin{tikzpicture}[level distance=2cm,
level 1/.style={sibling distance=3.5cm},
level 2/.style={sibling distance=1cm}]
\tikzstyle{every node}=[circle, draw, scale=1, inner sep=2pt]
\vspace{10pt}
\node[fill] (Root) {}
    child {
    node[fill] {}
    child { node[fill] {} 
	edge from parent 
	node[left, draw=none] {$i + 1$}
	node[right, draw=none]  {$j'$}    
    }
	edge from parent 
	node[left, draw=none] {$i$}
	node[right, draw=none]  {$j$}
};
\end{tikzpicture} & 
\begin{tikzpicture}[level distance=2cm,
level 1/.style={sibling distance=3.5cm},
level 2/.style={sibling distance=1cm}]
\tikzstyle{every node}=[circle, draw, scale=1, inner sep=2pt]
\vspace{10pt}
\node[fill] (Root) {}
    child {
    node[fill] {}
    child { node[fill] {} 
	edge from parent 
	node[left, draw=none] {$2n - j' + 1$}
	node[right, draw=none]  {$2n - i$}    
    }
	edge from parent 
	node[left, draw=none] {$2n - j + 1$}
	node[right, draw=none]  {$2n -i + 1$}
};
\end{tikzpicture} \\
    &  &  &  & \\
    \hline
    &  &  &  & \\
\begin{tikzpicture}[level distance=2cm,
level 1/.style={sibling distance=3.5cm},
level 2/.style={sibling distance=1cm}]
\tikzstyle{every node}=[circle, draw, scale=1, inner sep=2pt]
\node (Root)[fill] {}
    child {
    node[fill] {}
    child { node[fill] {} 
	edge from parent 
	node[left, draw=none] {$j$}
	node[right, draw=none]  {$i$}    
    }
	edge from parent 
	node[left, draw=none] {$j'$}
	node[right, draw=none]  {$i + 1$}
};
\end{tikzpicture} & 
\begin{tikzpicture}[level distance=2cm,
level 1/.style={sibling distance=3.5cm},
level 2/.style={sibling distance=1cm}]
\tikzstyle{every node}=[circle, draw, scale=1, inner sep=2pt]
\node (Root)[fill] {}
    child {
    node[fill] {}
    child { node[fill] {} 
	edge from parent 
	node[left, draw=none] {$2n - i + 1$}
	node[right, draw=none]  {$2n - j + 1$}    
    }
	edge from parent 
	node[left, draw=none] {$2n - i$}
	node[right, draw=none]  {$2n - j' + 1$}
};
\end{tikzpicture} & 
\begin{tikzpicture}[level distance=2cm,
level 1/.style={sibling distance=3.5cm},
level 2/.style={sibling distance=1cm}]
\tikzstyle{every node}=[circle, draw, scale=1, inner sep=2pt]
\node (Root)[fill] {}
    child {
    node[fill] {}
    child { node[fill] {} 
	edge from parent 
	node[left, draw=none] {$2n - i + 1$}
	node[right, draw=none]  {$2n - j + 1$}    
    }
	edge from parent 
	node[left, draw=none] {$2n - i$}
	node[right, draw=none]  {$2n - j' + 1$}
};
\end{tikzpicture} & 
\begin{tikzpicture}[level distance=2cm,
level 1/.style={sibling distance=3.5cm},
level 2/.style={sibling distance=1cm}]
\tikzstyle{every node}=[circle, draw, scale=1, inner sep=2pt]
\node (Root)[fill] {}
    child {
    node[fill] {}
    child { node[fill] {} 
	edge from parent 
	node[left, draw=none] {$j$}
	node[right, draw=none]  {$i$}    
    }
	edge from parent 
	node[left, draw=none] {$j'$}
	node[right, draw=none]  {$i + 1$}
};
\end{tikzpicture} & 
\begin{tikzpicture}[level distance=2cm,
level 1/.style={sibling distance=3.5cm},
level 2/.style={sibling distance=1cm}]
\tikzstyle{every node}=[circle, draw, scale=1, inner sep=2pt]
\node (Root)[fill] {}
    child {
    node[fill] {}
    child { node[fill] {} 
	edge from parent 
	node[left, draw=none] {$2n - i + 1$}
	node[right, draw=none]  {$2n - j + 1$}    
    }
	edge from parent 
	node[left, draw=none] {$2n - i$}
	node[right, draw=none]  {$2n - j' + 1$}
};
\end{tikzpicture} \\
    &  &  &  & \\
    \hline
    &  &  &  & \\
  \begin{tikzpicture}[level distance=2cm,
level 1/.style={sibling distance=3.5cm},
level 2/.style={sibling distance=1cm}]
\tikzstyle{every node}=[circle, draw, scale=1, inner sep=2pt]
\node (Root)[fill] {}
    child {
    node[fill] {}
    child { node[fill] {} 
	edge from parent 
	node[left, draw=none] {$i$}
	node[right, draw=none]  {$i + 1$}    
    }
	edge from parent 
	node[left, draw=none] {$j$}
	node[right, draw=none]  {$j'$}
};
\end{tikzpicture} & 
  \begin{tikzpicture}[level distance=2cm,
level 1/.style={sibling distance=3.5cm},
level 2/.style={sibling distance=1cm}]
\tikzstyle{every node}=[circle, draw, scale=1, inner sep=2pt]
\node (Root)[fill] {}
    child {
    node[fill] {}
    child { node[fill] {} 
	edge from parent 
	node[left, draw=none] {$2n - i$}
	node[right, draw=none]  {$2n - i + 1$}    
    }
	edge from parent 
	node[left, draw=none] {$2n - j' + 1$}
	node[right, draw=none]  {$2n - j + 1$}
};
\end{tikzpicture} & 
  \begin{tikzpicture}[sloped, level distance=2cm,
level 1/.style={sibling distance=3.5cm},
level 2/.style={sibling distance=1cm}]
\tikzstyle{every node}=[circle, draw, scale=1, inner sep=2pt]
\node (Root)[fill] {}
    child {
    node[fill] {} 
    	edge from parent 
	node[ellipse, above, draw=none] {$2n - j' + 1$}
	node[ellipse, below,draw=none]  {$2n - i$}  
}
child {
    node[fill] {}
    	edge from parent 
	node[ellipse, below, draw=none] {$2n - i + 1$}
	node[ellipse, above, draw=none]  {$2n - j + 1$}  
};
\end{tikzpicture} &
  \begin{tikzpicture}[sloped, level distance=2cm,
level 1/.style={sibling distance=3.5cm},
level 2/.style={sibling distance=1cm}]
\tikzstyle{every node}=[circle, draw, scale=1, inner sep=2pt]
\node (Root)[fill] {}
    child {
    node[fill] {} 
    	edge from parent 
	node[ellipse, above, draw=none] {$j$}
	node[ellipse, below,draw=none]  {$i$}  
}
child {
    node[fill] {}
    	edge from parent 
	node[ellipse, below, draw=none] {$i + 1$}
	node[ellipse, above, draw=none]  {$j'$}  
};
\end{tikzpicture} & 
  \begin{tikzpicture}[sloped, level distance=2cm,
level 1/.style={sibling distance=3.5cm},
level 2/.style={sibling distance=1cm}]
\tikzstyle{every node}=[circle, draw, scale=1, inner sep=2pt]
\node (Root)[fill] {}
    child {
    node[fill] {} 
    	edge from parent 
	node[ellipse, above, draw=none] {$2n - j' + 1$}
	node[ellipse, below,draw=none]  {$2n - i$}  
}
child {
    node[fill] {}
    	edge from parent 
	node[ellipse, below, draw=none] {$2n - i + 1$}
	node[ellipse, above, draw=none]  {$2n - j + 1$}  
};
\end{tikzpicture} \\
    &  &  &  & \\
      \hline
    &  &  &  & \\
  \begin{tikzpicture}[sloped, level distance=2cm,
level 1/.style={sibling distance=3.5cm},
level 2/.style={sibling distance=1cm}]
\tikzstyle{every node}=[circle, draw, scale=1, inner sep=2pt]
\node (Root)[fill] {}
    child {
    node[fill] {} 
    	edge from parent 
	node[ellipse, above, draw=none] {$j$}
	node[ellipse, below, draw=none]  {$i$}  
}
child {
    node[fill] {}
    	edge from parent 
	node[ellipse, below, draw=none] {$i + 1$}
	node[ellipse, above, draw=none]  {$j'$}  
};
\end{tikzpicture} & 
  \begin{tikzpicture}[sloped, level distance=2cm,
level 1/.style={sibling distance=3.5cm},
level 2/.style={sibling distance=1cm}]
\tikzstyle{every node}=[circle, draw, scale=1, inner sep=2pt]
\node (Root)[fill] {}
    child {
    node[fill] {} 
    	edge from parent 
	node[ellipse, above, draw=none] {$2n - j' + 1$}
	node[ellipse, below, draw=none]  {$2n - i$} 
}
child {
    node[fill] {}
    	edge from parent 
	node[ellipse, below, draw=none] {$2n - i + 1$}
	node[ellipse, above, draw=none]  {$2n - j + 1$}   
};
\end{tikzpicture} &
  \begin{tikzpicture}[level distance=2cm,
level 1/.style={sibling distance=3.5cm},
level 2/.style={sibling distance=1cm}]
\tikzstyle{every node}=[circle, draw, scale=1, inner sep=2pt]
\node (Root)[fill] {}
    child {
    node[fill] {}
    child { node[fill] {} 
	edge from parent 
	node[left, draw=none] {$2n - i$}
	node[right, draw=none]  {$2n - i + 1$}
    }
	edge from parent 
	node[left, draw=none] {$2n - j' + 1$}    
	node[right, draw=none]  {$2n - j + 1$}
};
\end{tikzpicture} & 
  \begin{tikzpicture}[level distance=2cm,
level 1/.style={sibling distance=3.5cm},
level 2/.style={sibling distance=1cm}]
\tikzstyle{every node}=[circle, draw, scale=1, inner sep=2pt]
\node (Root)[fill] {}
    child {
    node[fill] {}
    child { node[fill] {} 
	edge from parent 
	node[left, draw=none] {$i$}
	node[right, draw=none]  {$i + 1$}    
    }
	edge from parent 
	node[left, draw=none] {$j$}
	node[right, draw=none]  {$j'$}
};
\end{tikzpicture} & 
  \begin{tikzpicture}[level distance=2cm,
level 1/.style={sibling distance=3.5cm},
level 2/.style={sibling distance=1cm}]
\tikzstyle{every node}=[circle, draw, scale=1, inner sep=2pt]
\node (Root)[fill] {}
    child {
    node[fill] {}
    child { node[fill] {} 
	edge from parent 
	node[left, draw=none] {$2n - i$}
	node[right, draw=none]  {$2n - i + 1$}   
    }
	edge from parent 
	node[left, draw=none] {$2n - j' + 1$}
	node[right, draw=none]  {$2n - j + 1$}    
};
\end{tikzpicture} \\
    &  &  &  & \\
  \hline
\end{tabular}}
\caption{Identical results from $s_{i}^C$-local move on mirror image of $T'$ and mirror image of $s_{i}^C$-local move on $T'$ \label{tab:mirrorchart}}
\end{table}

Since these are the only edges changed by the local move, all the other edges will still satisfy the symmetry condition.  We conclude that $s_i^C(T)$ is symmetric whenever $T$ is symmetric.  The result follows.
\end{proof}

Next we prove there is exactly one connected component of symmetric plane trees in $G^C$ by showing that each symmetric plane tree can be transformed via $s_i^C$-local moves to one with the leaf $e(1,2)$ and then using induction.

\begin{theorem}
If $T$ and $T'$ are symmetric plane trees then there is a finite sequence of $s_i^C$-local moves that transforms $T$ into $T'$. 
\end{theorem}

\begin{proof}
The proof is by induction on the total number $n$ of edges in a plane tree. 

There are two base cases.  The case when $n=2$ was addressed in Figure~\ref{fig:S2}  since $s_2^C=s_2$ in that setting; it is reproduced in type $C$ notation in Figure~\ref{fig:symmetrybasecase}(a).  The case when $n=3$ has three symmetric plane trees as shown in Figure~\ref{fig:symmetrybasecase}(b): the top and the middle are connected by the edge $s_3^C=s_3$ while the middle and the bottom are connected by $s_2^C=s_2s_4$.  

\newsavebox{\tempboxSYMM}
\sbox{\tempboxSYMM}{%
\begin{tikzpicture}[scale = 0.5, every node/.style={scale=0.6}]
\node (lattice) {
\begin{tabular}{c c c c c}
& 
& 
$ \begin{gathered}
\\
\begin{tikzpicture}[level distance=1cm,
level 1/.style={sibling distance=1cm},
level 2/.style={sibling distance=1cm}]
level 3/.style={sibling distance=1cm}]
\tikzstyle{every node}=[circle, draw, scale=.8, inner sep=2pt]
\node[inner sep=2pt] (Root) {}
child {
    child {
    node[fill] {}
    child { 
    node[fill] {} 
	edge from parent 
	node[left, draw=none] {\tiny $3$}
	node[right, draw=none]  {\tiny $4$}    
    }
	edge from parent 
	node[left, draw=none] {\tiny $2$}
	node[right, draw=none]  {\tiny $5$}  
}	
	    node[fill] {}
	edge from parent 
	node[left, draw=none] {\tiny $1$}
	node[right, draw=none]  {\tiny $6$}    
};
\end{tikzpicture} \\
\tiny
\begin{tabular}{ | c | c | c | }
  \hline
  $1$ & $2$ & $3$ \\ \hline
  $4$ & $5$ & $6$ \\ \hline
\end{tabular}
\end{gathered} $
\vspace{10pt}
& 
& 
\\
& 
& 
$ \begin{gathered}
\begin{tikzpicture}
  [scale = 1.5]
  \node (n1) at (0, 0) {};
  \node (n2) at (0, -1) {};
  \node [overlay] at (0.15, -0.5) {\small $s_{3}^C$};
  \draw[-] (n1) -- (n2);
\end{tikzpicture}
\end{gathered} $
\vspace{10pt}
& 
& 
\\  
& 
& 
$ \begin{gathered}
\begin{tikzpicture}[level distance=1cm,
level 1/.style={sibling distance=1cm},
level 2/.style={sibling distance=2cm}]
\tikzstyle{every node}=[circle, draw, scale=.8, inner sep=2pt]
\vspace{10pt}
\node[inner sep=2pt] (Root) {}
child {
    child {
    node[fill] {}
	edge from parent 
	node[sloped, ellipse, above, draw=none]  {\tiny $2$}
	node[sloped, ellipse, below, draw=none]  {\tiny $3$}  
}	
    child { 
    node[fill] {} 
	edge from parent 
	node[sloped, ellipse, above, draw=none] {\tiny $5$}
	node[sloped, ellipse, below, draw=none]  {\tiny $4$}    
    }
	    node[fill] {}
	edge from parent 
	node[left, draw=none] {\tiny $1$}
	node[right, draw=none]  {\tiny $6$}    
};
\end{tikzpicture} \\
\tiny
\begin{tabular}{ | c | c | c | }
  \hline
  $1$ & $2$ & $4$ \\ \hline
  $3$ & $5$ & $6$ \\ \hline
\end{tabular}
\end{gathered} $
\vspace{10pt}
& 
& 
\\  
& 
& 
$ \begin{gathered}
\begin{tikzpicture}
  [scale = 1.5]
  \node (n1) at (0, 0) {};
  \node (n2) at (0, -1) {};
  \node [overlay] at (0.15, -0.5) {\small $s_{2}^C$};
  \draw[-] (n1) -- (n2);
\end{tikzpicture}
\end{gathered} $
\vspace{10pt}
& 
& 
\\ 
& 
& 
$ \begin{gathered}
\begin{tikzpicture}[level distance=1cm,
level 1/.style={sibling distance=1.5cm},
level 2/.style={sibling distance=1cm}]
\tikzstyle{every node}=[circle, draw, scale=.8, inner sep=2pt]
\node[inner sep=2pt] (root) {}
    child {
    node[fill] {} 
    	edge from parent 
	node[sloped, ellipse, above, draw=none] {\tiny $1$}
	node[sloped, ellipse, below, draw=none]  {\tiny $2$}  
    }
    child {
    node[fill] {}
    	edge from parent 
	node[left, draw=none] {\tiny $3$}
	node[right, draw=none]  {\tiny $4$}  
    }
    child {
    node[fill] {}
    	edge from parent 
	node[sloped, ellipse, below, draw=none] {\tiny $5$}
	node[sloped, ellipse, above, draw=none]  {\tiny $6$}  
    };
\end{tikzpicture} \\
\tiny
\begin{tabular}{ | c | c | c | }
  \hline
  $1$ & $3$ & $5$ \\ \hline
  $2$ & $4$ & $6$ \\ \hline
\end{tabular}
\end{gathered} $
\vspace{10pt}
\end{tabular}
};
\end{tikzpicture}
}

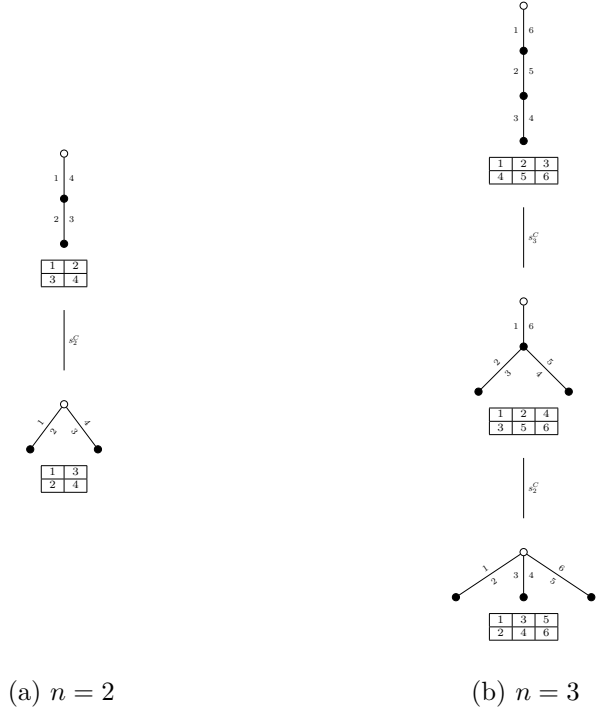
\begin{figure}[H]
\begin{center}
\begin{subfigure}[b]{0.4\textwidth}  
\centering
\vbox to\ht\tempboxSYMM{
\vfill
\begin{tikzpicture}[scale = 0.5, every node/.style={scale=0.6}]
\node (lattice) {
\begin{tabular}{c}
$ \begin{gathered}
\\
\begin{tikzpicture}[level distance=1cm,
level 1/.style={sibling distance=1cm},
level 2/.style={sibling distance=1cm}]
level 3/.style={sibling distance=1cm}]
\tikzstyle{every node}=[circle, draw, scale=.8, inner sep=2pt]
\node[inner sep=2pt] (Root) {}
child {
    child {
    node[fill] {}
	edge from parent 
	node[left, draw=none] {\tiny $2$}
	node[right, draw=none]  {\tiny $3$}  
}	
	node[fill] {}
	edge from parent 
	node[left, draw=none] {\tiny $1$}
	node[right, draw=none]  {\tiny $4$}    
};
\end{tikzpicture} \\
\tiny
\begin{tabular}{ | c | c | }
  \hline
  $1$ & $2$ \\ \hline
  $3$ & $4$ \\ \hline
\end{tabular}
\end{gathered} $
\vspace{10pt}
\\
$ \begin{gathered}
\begin{tikzpicture}
  [scale = 1.5]
  \node (n1) at (0, 0) {};
  \node (n2) at (0, -1) {};
  \node [overlay] at (0.15, -0.5) {\small $s_{2}^C$};
  \draw[-] (n1) -- (n2);
\end{tikzpicture}
\end{gathered} $
\vspace{10pt}
\\
$ \begin{gathered}
\begin{tikzpicture}[level distance=1cm,
level 1/.style={sibling distance=1.5cm},
level 2/.style={sibling distance=1cm}]
\tikzstyle{every node}=[circle, draw, scale=.8, inner sep=2pt]
\node[inner sep=2pt] (root) {}
    child {
    node[fill] {} 
    	edge from parent 
	node[sloped, ellipse, above, draw=none] {\tiny $1$}
	node[sloped, ellipse, below, draw=none]  {\tiny $2$}  
    }
    child {
    node[fill] {}
    	edge from parent 
	node[sloped, ellipse, below, draw=none] {\tiny $3$}
	node[sloped, ellipse, above, draw=none]  {\tiny $4$}  
    };
\end{tikzpicture} \\
\tiny
\begin{tabular}{ | c | c | }
  \hline
  $1$ & $3$ \\ \hline
  $2$ & $4$ \\ \hline
\end{tabular}
\end{gathered} $
\vspace{10pt}
\end{tabular}
};
\end{tikzpicture}
\vfill
}
\caption{$n = 2$ \label{fig:n2symm}}
\end{subfigure}
\begin{subfigure}[b]{0.4\textwidth}  
\centering
\usebox{\tempboxSYMM}
\caption{$n = 3$ \label{fig:n3symm}} 
\end{subfigure}
\end{center}
\caption{Type $C$ base cases for symmetric plane trees \label{fig:symmetrybasecase}}
\end{figure}

For the induction step, assume that any two symmetric plane trees with at most $n-1$ edges can be transformed into each other by a sequence of $s_i^C$-local moves.   Now consider a symmetric plane tree with $n$ edges.

First we show that there is a path of $s_i^C$-local moves from each plane tree $T$ to a plane tree containing the edge $e(1,2)$.  If $T$ does not have the edge $e(1,2)$ then it has the edge $e(1,j)$ for some $j \geq 3$.  This means that $1$ and $2$ are both in the top row of the tableau $\phi(T)$.  Let $k$ be the first integer {\em not} in the top row of $\phi(T)$. Since $\phi(T)$ has shape $(n,n)$ we know that $k \leq n+1$. Proposition~\ref{proposition: connected} showed that the standard tableau $s_2s_3\cdots s_{k-1}(\phi(T))$ has $2$ in the bottom row by way of $s_i$-local moves of type $A$.  We now confirm that $s_2^Cs_3^C\cdots s_{k-1}^C$ also moves $2$ to the bottom row.  If $k=n+1$ then the top row of the tableau is filled with the integers from $1$ through $n$ and $s_{k-1}^C=s_n$ simply exchanges $n$ and $n+1$.  For $j \leq n$  we know that $s_2^Cs_3^C\cdots s_{j-1}^C$ permutes numbers within the disjoint sets $\{1,2,\ldots,j\}$ and $\{j+1,\ldots,2n\}$ independently.  So the tableau $s_2^Cs_3^C\cdots s_{k-1}^C(\phi(T))$ has $2$ on the bottom row for all $k \leq n+1$.  We therefore conclude that $T$ is in the same connected component of $G^C$ as a plane tree with the edge $e(1,2)$.

We next show that all symmetric plane trees are in the same connected component of $G^C$.  Suppose $T$ and $T'$ are both symmetric plane trees.  By the previous argument, we can assume that they each contain the leaf $e(1,2)$ and hence by symmetry the leaf $e(2n-1,2n)$. Since these edges are both leaves, they can be erased without disconnecting the two trees.  Consider the subtrees $T_1$ and $T_1'$ consisting respectively of all the edges of $T$ and $T'$ except $e(1,2)$ and $e(2n-1,2n)$.  The two subtrees are still symmetric but have only $n-2$ edges.  By the inductive hypothesis we can transform $T_1$ into $T_1'$ with a sequence of $s_i^C$-local moves, which also transforms $T$ into $T'$.  By induction the claim is proven.
\end{proof}

The proof for asymmetric plane trees is somewhat similar but more subtle.

\begin{theorem}
If $T$ and $T'$ are asymmetric plane trees then there is a finite sequence of $s_i^C$-local moves that transforms $T$ into $T'$. 
\end{theorem}

\begin{proof}
The proof is by induction on the total number of edges $n$ in a plane tree. 

The base cases for asymmetric plane trees occur when $n=3$ and when $n=4$.  There are two asymmetric plane trees with three edges, and these trees are related by $s_2^C=s_2s_4$ as shown in Figure~\ref{fig:asymmetrybasecase}(a).  There are eight asymmetric plane trees with four edges, and these trees are related by the $s_i^C$-local moves shown in Figure~\ref{fig:asymmetrybasecase}(b).

\newsavebox{\tempboxASYMM}
\sbox{\tempboxASYMM}{%
\begin{tikzpicture}[scale = 0.5, every node/.style={scale=0.6}]
\node (lattice) {
\begin{tabular}{c c c}
& 
& 
$ \begin{gathered}
\\
\begin{tikzpicture}[level distance=1cm,
level 1/.style={sibling distance=2cm},
level 2/.style={sibling distance=2cm}]
\tikzstyle{every node}=[circle, draw, scale=.8, inner sep=2pt]
\node[inner sep=2pt] (Root) {}
child {
    child {
    	child {
	node[fill] {}
	edge from parent 
	node[left, draw=none]  {\tiny $3$}
	node[right, draw=none]  {\tiny $4$}  
	}
    	node[fill] {}
	edge from parent 
	node[sloped, ellipse, above, draw=none]  {\tiny $2$}
	node[sloped, ellipse, below, draw=none]  {\tiny $5$}  
}	
    child { 
    	node[fill] {} 
	edge from parent 
	node[sloped, ellipse, above, draw=none] {\tiny $7$}
	node[sloped, ellipse, below, draw=none]  {\tiny $6$}    
    }
	node[fill] {}
	edge from parent 
	node[left, draw=none] {\tiny $1$}
	node[right, draw=none]  {\tiny $8$}    
};
\end{tikzpicture} \\
\tiny
\begin{tabular}{ | c | c | c | c | }
  \hline
  $1$ & $2$ & $3$ & $6$ \\ \hline
  $4$ & $5$ & $7$ & $8$ \\ \hline
\end{tabular}
\end{gathered} $
\vspace{10pt}
\\
& 
$ \begin{gathered}
\begin{tikzpicture}
  [scale = 1.5]
  \node (n1) at (0, 0) {};
  \node (n2) at (1, 1) {};
  \draw[-] (n1) -- (n2) node[sloped, midway, above] {\small $s_{3}^C$};
\end{tikzpicture}
\end{gathered} $
\vspace{10pt}
& 
$ \begin{gathered}
\begin{tikzpicture}
  [scale = 1.5]
  \node (n1) at (0, 0) {};
  \node (n2) at (0, -1) {};
  \node [overlay] at (0.15, -0.5) {\small $s_{2}^C$};
  \draw[-] (n1) -- (n2);
\end{tikzpicture}
\end{gathered} $
\vspace{10pt}
\\
$ \begin{gathered}
\begin{tikzpicture}[level distance=1cm,
level 1/.style={sibling distance=2cm},
level 2/.style={sibling distance=2cm}]
\tikzstyle{every node}=[circle, draw, scale=.8, inner sep=2pt]
\node[inner sep=2pt] (Root) {}
child {
    child {
    	node[fill] {}
	edge from parent 
	node[sloped, ellipse, above, draw=none]  {\tiny $2$}
	node[sloped, ellipse, below, draw=none]  {\tiny $3$}  
	}	
    child { 
        	child {
		node[fill] {}
		edge from parent 
		node[left, draw=none]  {\tiny $5$}
		node[right, draw=none]  {\tiny $6$}  
		}
    	node[fill] {} 
	edge from parent 
	node[sloped, ellipse, above, draw=none] {\tiny $7$}
	node[sloped, ellipse, below, draw=none]  {\tiny $4$}    
    	}
	node[fill] {}
	edge from parent 
	node[left, draw=none] {\tiny $1$}
	node[right, draw=none]  {\tiny $8$}    
};
\end{tikzpicture} \\
\tiny
\begin{tabular}{ | c | c | c | c | }
  \hline
  $1$ & $2$ & $4$ & $5$ \\ \hline
  $3$ & $6$ & $7$ & $8$ \\ \hline
\end{tabular}
\end{gathered} $
\vspace{10pt}
& 
& 
$ \begin{gathered}
\begin{tikzpicture}[level distance=1cm,
level 1/.style={sibling distance=2cm},
level 2/.style={sibling distance=1cm}]
\tikzstyle{every node}=[circle, draw, scale=.8, inner sep=2pt]
\node[inner sep=2pt] (Root) {}
child {
	child {
		child {
			node[fill] {}
			edge from parent 
			node[left, draw=none]  {\tiny $3$}
			node[right, draw=none]  {\tiny $4$}  		
		}
		node[fill] {}
		edge from parent 
		node[left, draw=none]  {\tiny $2$}
		node[right, draw=none]  {\tiny $5$}  
	}
    	node[fill] {}
	edge from parent 
	node[sloped, ellipse, above, draw=none]  {\tiny $1$}
	node[sloped, ellipse, below, draw=none]  {\tiny $6$}  
}
child {
	node[fill] {}
	edge from parent 
	node[sloped, ellipse, below, draw=none] {\tiny $7$}
	node[sloped, ellipse, above, draw=none]  {\tiny $8$}    
};
\end{tikzpicture} \\
\tiny
\begin{tabular}{ | c | c | c | c | }
  \hline
  $1$ & $2$ & $3$ & $7$ \\ \hline
  $4$ & $5$ & $6$ & $8$ \\ \hline
\end{tabular}
\end{gathered} $
\vspace{10pt}
\\
$ \begin{gathered}
\begin{tikzpicture}
  [scale = 1.5]
  \node (n1) at (0, 0) {};
  \node (n2) at (0, -1) {};
  \node [overlay] at (0.15, -0.5) {\small $s_{2}^C$};
  \draw[-] (n1) -- (n2);
\end{tikzpicture}
\end{gathered} $
\vspace{10pt}
& 
& 
$ \begin{gathered}
\begin{tikzpicture}
  [scale = 1.5]
  \node (n1) at (0, 0) {};
  \node (n2) at (0, -1) {};
  \node [overlay] at (0.15, -0.5) {\small $s_{3}^C$};
  \draw[-] (n1) -- (n2);
\end{tikzpicture}
\end{gathered} $
\vspace{10pt}
\\  
$ \begin{gathered}
\begin{tikzpicture}[level distance=1cm,
level 1/.style={sibling distance=2cm},
level 2/.style={sibling distance=1cm}]
\tikzstyle{every node}=[circle, draw, scale=.8, inner sep=2pt]
\node[inner sep=2pt] (Root) {}
child {
	node[fill] {}
	edge from parent 
	node[sloped, ellipse, below, draw=none] {\tiny $2$}
	node[sloped, ellipse, above, draw=none]  {\tiny $1$}    
}
child {
	child {
		child {
			node[fill] {}
			edge from parent 
			node[left, draw=none]  {\tiny $5$}
			node[right, draw=none]  {\tiny $6$}  		
		}
		node[fill] {}
		edge from parent 
		node[left, draw=none]  {\tiny $4$}
		node[right, draw=none]  {\tiny $7$}  
	}
    	node[fill] {}
	edge from parent 
	node[sloped, ellipse, above, draw=none]  {\tiny $8$}
	node[sloped, ellipse, below, draw=none]  {\tiny $3$}  
};
\end{tikzpicture} \\
\tiny
\begin{tabular}{ | c | c | c | c | }
  \hline
  $1$ & $3$ & $4$ & $5$ \\ \hline
  $2$ & $6$ & $7$ & $8$ \\ \hline
\end{tabular}
\end{gathered} $
\vspace{10pt}
& 
& 
$ \begin{gathered}
\begin{tikzpicture}[level distance=1cm,
level 1/.style={sibling distance=2cm},
level 2/.style={sibling distance=2cm}]
\tikzstyle{every node}=[circle, draw, scale=.8, inner sep=2pt]
\node[inner sep=2pt] (Root) {}
child {
	child {
		child { 
			node[draw=none] {}
			edge from parent[draw=none]
		}
		node[fill] {}
		edge from parent 
		node[sloped, ellipse, above, draw=none]  {\tiny $2$}
		node[sloped, ellipse, below, draw=none]  {\tiny $3$}  
	}
	child {
		node[fill] {}
		edge from parent 
		node[sloped, ellipse, above, draw=none]  {\tiny $5$}
		node[sloped, ellipse, below, draw=none]  {\tiny $4$}  		
	}
    	node[fill] {}
	edge from parent 
	node[sloped, ellipse, above, draw=none]  {\tiny $1$}
	node[sloped, ellipse, below, draw=none]  {\tiny $6$}  
}
child {
	node[fill] {}
	edge from parent 
	node[sloped, ellipse, below, draw=none] {\tiny $7$}
	node[sloped, ellipse, above, draw=none]  {\tiny $8$}    
};
\end{tikzpicture} \\
\tiny
\begin{tabular}{ | c | c | c | c | }
  \hline
  $1$ & $2$ & $4$ & $7$ \\ \hline
  $3$ & $5$ & $6$ & $8$ \\ \hline
\end{tabular}
\end{gathered} $
\vspace{10pt}
\\  
$ \begin{gathered}
\begin{tikzpicture}
  [scale = 1.5]
  \node (n1) at (0, 0) {};
  \node (n2) at (0, -1) {};
  \node [overlay] at (0.15, -0.5) {\small $s_{3}^C$};
  \draw[-] (n1) -- (n2);
\end{tikzpicture}
\end{gathered} $
\vspace{10pt}
& 
$ \begin{gathered}
\begin{tikzpicture}
  [scale = 1.5]
  \node (n1) at (0, 0) {};
  \node (n2) at (-1, -1) {};
  \draw[-] (n1) -- (n2) node[sloped, midway, above] {\small $s_{2}^C$};
\end{tikzpicture}
\end{gathered} $
\vspace{10pt}
& 
$ \begin{gathered}
\begin{tikzpicture}
  [scale = 1.5]
  \node (n1) at (0, 0) {};
  \node (n2) at (0, -1) {};
  \node [overlay] at (0.15, -0.5) {\small $s_{4}^C$};
  \draw[-] (n1) -- (n2);
\end{tikzpicture}
\end{gathered} $
\vspace{10pt}
\\ 
$ \begin{gathered}
\begin{tikzpicture}[level distance=1cm,
level 1/.style={sibling distance=2cm},
level 2/.style={sibling distance=2cm}]
\tikzstyle{every node}=[circle, draw, scale=.8, inner sep=2pt]
\node[inner sep=2pt] (Root) {}
child {
	node[fill] {}
	edge from parent 
	node[sloped, ellipse, below, draw=none] {\tiny $2$}
	node[sloped, ellipse, above, draw=none]  {\tiny $1$}    
}
child {
	child {
		node[fill] {}
		edge from parent 
		node[sloped, ellipse, above, draw=none]  {\tiny $4$}
		node[sloped, ellipse, below, draw=none]  {\tiny $5$}  
	}
	child {
		node[fill] {}
		edge from parent 
		node[sloped, ellipse, above, draw=none]  {\tiny $7$}
		node[sloped, ellipse, below, draw=none]  {\tiny $6$}  		
	}
    	node[fill] {}
	edge from parent 
	node[sloped, ellipse, above, draw=none]  {\tiny $8$}
	node[sloped, ellipse, below, draw=none]  {\tiny $3$}  
};
\end{tikzpicture} \\
\tiny
\begin{tabular}{ | c | c | c | c | }
  \hline
  $1$ & $3$ & $4$ & $6$ \\ \hline
  $2$ & $5$ & $7$ & $8$ \\ \hline
\end{tabular}
\end{gathered} $
\vspace{10pt}
& 
& 
$ \begin{gathered}
\begin{tikzpicture}[level distance=1cm,
level 1/.style={sibling distance=1.5cm},
level 2/.style={sibling distance=2cm}]
\tikzstyle{every node}=[circle, draw, scale=.8, inner sep=2pt]
\node[inner sep=2pt] (Root) {}
child {
	child {
		node[fill] {}
		edge from parent 
		node[left, draw=none]  {\tiny $2$}
		node[right, draw=none]  {\tiny $3$}  
	}
    	node[fill] {}
	edge from parent 
	node[sloped, ellipse, above, draw=none]  {\tiny $1$}
	node[sloped, ellipse, below, draw=none]  {\tiny $4$}  
}
child {
	node[fill] {}
	edge from parent 
	node[left, draw=none] {\tiny $5$}
	node[right, draw=none]  {\tiny $6$}    
}
child {
	node[fill] {}
	edge from parent 
	node[sloped, ellipse, below, draw=none] {\tiny $7$}
	node[sloped, ellipse, above, draw=none]  {\tiny $8$}    
};
\end{tikzpicture} \\
\tiny
\begin{tabular}{ | c | c | c | c | }
  \hline
  $1$ & $2$ & $5$ & $7$ \\ \hline
  $3$ & $4$ & $6$ & $8$ \\ \hline
\end{tabular}
\end{gathered} $
\vspace{10pt}
\\  
& 
$ \begin{gathered}
\begin{tikzpicture}
  [scale = 1.5]
  \node (n1) at (0, 0) {};
  \node (n2) at (1, -1) {};
  \draw[-] (n1) -- (n2) node[sloped, midway, above] {\small $s_{4}^C$};
\end{tikzpicture}
\end{gathered} $
\vspace{10pt}
& 
$ \begin{gathered}
\begin{tikzpicture}
  [scale = 1.5]
  \node (n1) at (0, 0) {};
  \node (n2) at (0, -1) {};
  \node [overlay] at (0.15, -0.5) {\small $s_{2}^C$};
  \draw[-] (n1) -- (n2);
\end{tikzpicture}
\end{gathered} $
\vspace{10pt}
\\ 
& 
& 
$ \begin{gathered}
\begin{tikzpicture}[level distance=1cm,
level 1/.style={sibling distance=1.5cm},
level 2/.style={sibling distance=2cm}]
\tikzstyle{every node}=[circle, draw, scale=.8, inner sep=2pt]
\node[inner sep=2pt] (Root) {}
child {
	node[fill] {}
	edge from parent 
	node[sloped, ellipse, below, draw=none] {\tiny $2$}
	node[sloped, ellipse, above, draw=none]  {\tiny $1$}    
}
child {
	node[fill] {}
	edge from parent 
	node[left, draw=none] {\tiny $3$}
	node[right, draw=none]  {\tiny $4$}    
}
child {
	child {
		node[fill] {}
		edge from parent 
		node[right, draw=none]  {\tiny $7$}
		node[left, draw=none]  {\tiny $6$}  
	}
    	node[fill] {}
	edge from parent 
	node[sloped, ellipse, above, draw=none]  {\tiny $8$}
	node[sloped, ellipse, below, draw=none]  {\tiny $5$}  
};
\end{tikzpicture} \\
\tiny
\begin{tabular}{ | c | c | c | c | }
  \hline
  $1$ & $3$ & $5$ & $6$ \\ \hline
  $2$ & $4$ & $7$ & $8$ \\ \hline
\end{tabular}
\end{gathered} $
\vspace{10pt}
\end{tabular}
};
\end{tikzpicture}
}

\begin{figure}[H]
\begin{center}
\begin{subfigure}[b]{0.33\textwidth}  
\centering
\vbox to\ht\tempboxASYMM{
\vfill
\begin{tikzpicture}[scale = 0.5, every node/.style={scale=0.6}]
\node (lattice) {
\begin{tabular}{c}
$ \begin{gathered}
\\
\begin{tikzpicture}[level distance=1cm,
level 1/.style={sibling distance=2cm},
level 2/.style={sibling distance=1cm}]
\tikzstyle{every node}=[circle, draw, scale=.8, inner sep=2pt]
\node[inner sep=2pt] (Root) {}
    child {
    node[fill] {}
    child { 
    node[fill] {} 
	edge from parent 
	node[left, draw=none] {\tiny $2$}
	node[right, draw=none]  {\tiny $3$}    
    }
	edge from parent 
	node[sloped, ellipse, above, draw=none] {\tiny $1$}
	node[sloped, ellipse, below, draw=none]  {\tiny $4$}  
	}
child {
    node[fill] {}
    	edge from parent 
	node[sloped, ellipse, below, draw=none] {\tiny $5$}
	node[sloped, ellipse, above, draw=none]  {\tiny $6$}  
};
\end{tikzpicture} \\
\tiny
\begin{tabular}{ | c | c | c | }
  \hline
  $1$ & $2$ & 5 \\ \hline
  $3$ & $4$ & 6 \\ \hline
\end{tabular}
\end{gathered} $
\vspace{10pt}
\\  
$ \begin{gathered}
\begin{tikzpicture}
  [scale = 1.5]
  \node (n1) at (0, 0) {};
  \node (n2) at (0, -1) {};
  \node [overlay] at (0.15, -0.5) {\small $s_{2}^C$};
  \draw[-] (n1) -- (n2);
\end{tikzpicture}
\end{gathered} $
\vspace{10pt}
\\ 
$ \begin{gathered}
\begin{tikzpicture}[level distance=1cm,
level 1/.style={sibling distance=2cm},
level 2/.style={sibling distance=1cm}]
\tikzstyle{every node}=[circle, draw, scale=.8, inner sep=2pt]
\node[inner sep=2pt] (Root) {}
child {
    node[fill] {}
    	edge from parent 
	node[sloped, ellipse, above, draw=none] {\tiny $1$}
	node[sloped, ellipse, below, draw=none]  {\tiny $2$}  
}
    child {
    node[fill] {}
    child { 
    node[fill] {} 
	edge from parent 
	node[left, draw=none] {\tiny $4$}
	node[right, draw=none]  {\tiny $5$}    
    }
	edge from parent 
	node[sloped, ellipse, below, draw=none] {\tiny $3$}
	node[sloped, ellipse, above, draw=none]  {\tiny $6$}  
	};
\end{tikzpicture} \\
\tiny
\begin{tabular}{ | c | c | c | }
  \hline
  $1$ & $3$ & 4 \\ \hline
  $2$ & $5$ & 6 \\ \hline
\end{tabular}
\end{gathered} $
\vspace{10pt}
\end{tabular}
};
\end{tikzpicture}
\vfill
}
\caption{$n = 3$ \label{fig:n3asymm}}
\end{subfigure}
\begin{subfigure}[b]{0.66\textwidth}  
\centering
\usebox{\tempboxASYMM}
\caption{$n = 4$ \label{fig:n4asymm}} 
\end{subfigure}
\end{center}
\caption{Type $C$ base cases for asymmetric plane trees \label{fig:asymmetrybasecase}}
\end{figure}
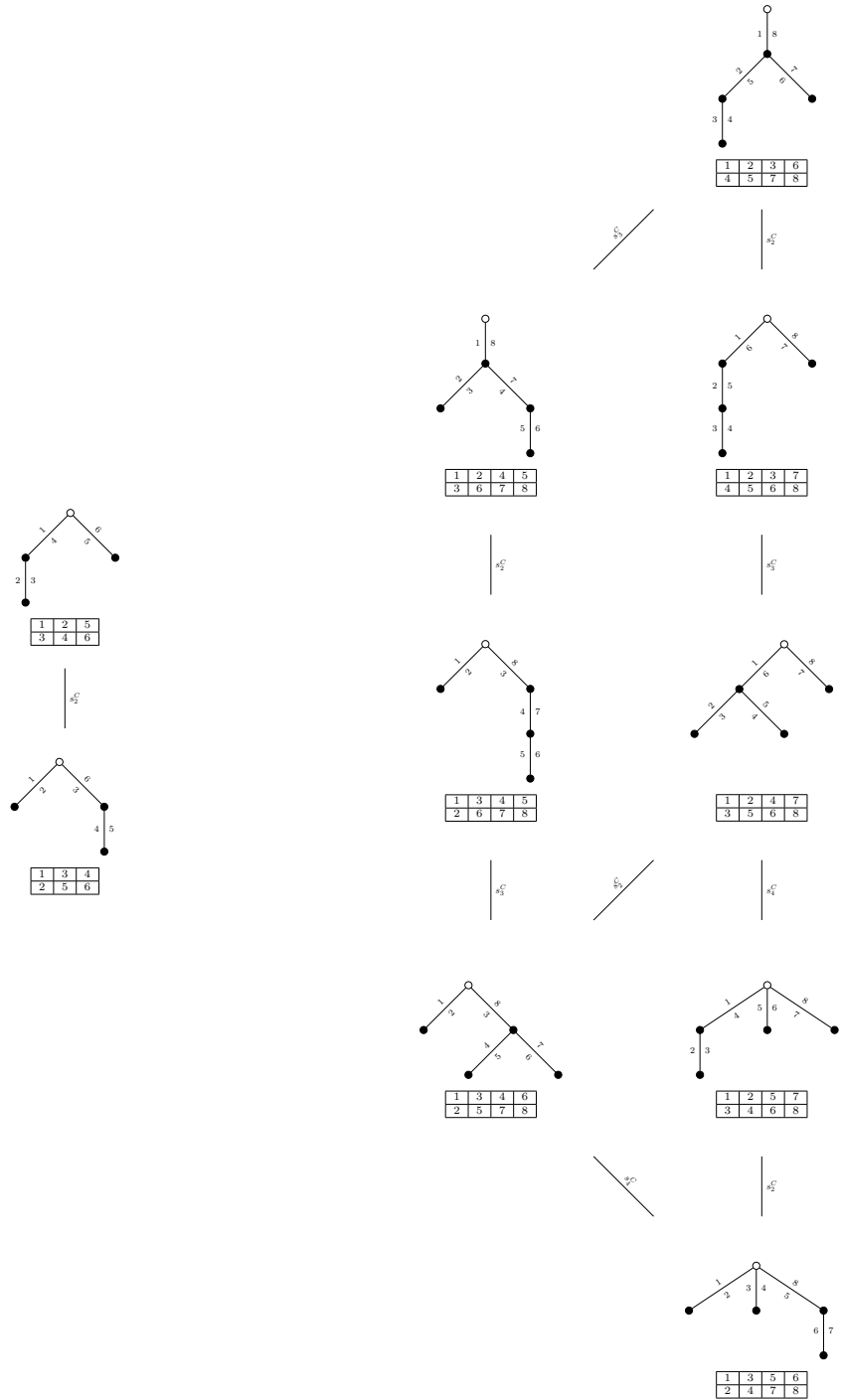

For the induction step, let $n \geq 4$ and assume that any two asymmetric plane trees with at most $n-1$ edges can be transformed into each other by a sequence of $s_i^C$-local moves.   

Let $T$ be an arbitrary asymmetric plane tree with $n$ edges.  We describe an algorithm to obtain a sequence of $s_i^C$-local moves from $T$ to a plane tree with only the edge $e(1,2n)$ incident to the root.  (Note the special case of plane trees with three edges, for which there are no asymmetric trees containing the edge $e(1, 2n) = e(1,6)$.)  Figure~\ref{fig:bigstar} gives a schematic of $T$ with notation for the half-edges $j_1<j_1 +1<j_2< j_2+1< \cdots < j_{k-1}+1<j_k$ and the possibly-empty subtrees $a_i$.

\begin{center}
\begin{figure}[H]
\tikzset{blob/.style={draw, dashed, kite, rounded corners, shape border rotate=180, minimum size=1.4cm, xshift=0cm, yshift=1.4cm}} 
\begin{tikzpicture} [level distance=2cm,
level 1/.style={sibling distance=3cm},
level 2/.style={sibling distance=1cm}]
\tikzstyle{every node}=[circle, draw, scale=.8, inner sep=2pt]
\node (root) [circle, draw] {}
  child {node[fill] (1) {}
  	child{ node[blob] {\small $a_{1}$} } 
    	edge from parent 
	node[sloped, ellipse, above, draw=none] {\small $1$}
	node[sloped, ellipse, below, draw=none]  {\small $j_{1}$}}  
  child {node[fill] (2) {}
  	child{ node[blob] {\small $a_{2}$} } 
     	edge from parent 
	node[sloped, ellipse, above, draw=none] {\small $j_{1} + 1$}
	node[sloped, ellipse, below, draw=none]  {\small $j_{2}$}}  
  child {node[fill] (3) {}
  	child{ node[blob] {\small $a_{i}$} } 
     	edge from parent 
	node[sloped, ellipse, above, draw=none] {\small $j_{i}$}
	node[sloped, ellipse, below, draw=none]  {\small $j_{i - 1} + 1$}}  
  child {node[fill] (4) {}
  	child{ node[blob] {\small $a_{k}$} } 
     	edge from parent 
	node[sloped, ellipse, above, draw=none] {\small $j_{k} = 2n$}
	node[sloped, ellipse, below, draw=none]  {\small $j_{k - 1} + 1$}} ;

\path (2) -- (3) node[draw=none, midway] () {\LARGE $\cdots$}; 
\path (3) -- (4) node[draw=none, midway] () {\LARGE $\cdots$}; 
\draw [decorate,decoration={brace,amplitude=2mm}] ($(4.west)+(1mm,-1.75cm)$) -- ($(1.east)+(-1mm,-1.75cm)$) ; 
\node [draw=none, anchor=center, yshift=-5.5cm] {$k$ subtrees} ; 
\end{tikzpicture} \vspace{-0.75cm}
\caption{Edges incident to the root in $T$ \label{fig:bigstar}}
\end{figure}
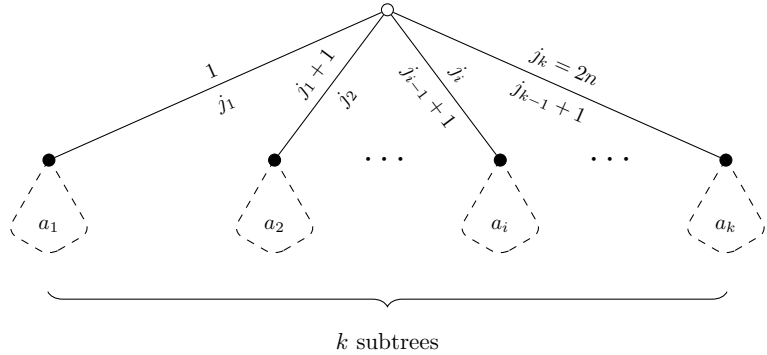
\end{center}

Since $s_{j_1}^C = s_{j_1}s_{2n - j_1}$ we can use an $s_{j_1}^C$-local move on edges $e(1, j_1)$ and $e(j_1 + 1, j_2)$ to form edges $e(1, j_2)$ and $e(j_1, j_1 + 1)$.  Repeat this process for each $j_p$ with $j_p \leq n$. 

We can continue this process for any edge $e(1,j)$ with $j>n$ as long as we are not in the case of Figure~\ref{fig:problems}.  The problem in that case is that the local move that collapses $2n-j_p$ and $2n-j_p+1$ simultaneously triggers a type (1) local move on half-edges $j_p$ and $j_p+1$ and reinserts a lower-indexed branch into the root. (Note that $j_p < n < 2n-j_p$ by our convention on the labeling of the half-edges in the plane tree.)  

To address the case in Figure~\ref{fig:problems}, we apply the sequence $s_{j_p-1}^C s_{j'_{q-1}}^C \cdots s_{j'_2}^C s_{j'_1}^C$ of $s_i^C$-local moves.  Since $j_p<n$ the sequence of local moves permutes indices in the sets $\{j'_1,\ldots,j_p\}$ and $\{2n-j_p+1,2n-j_p+2,\ldots,2n-j'_1+1\}$ independently.   Thus after applying those $s_i^C$-local moves, the tree contains both of the edges $e(1,2n-j_p)$ and $e(2,j_p+1)$.  Applying $s_{j_p}^C$ to that tree results in a plane tree with edge $e(1,k)$ for $k \geq 2n-j_p+2$ as desired.  Continuing this process, we obtain in all cases a sequence of $s_i^C$-local moves that transforms an arbitrary asymmetric plane tree to one containing the edge $e(1,2n)$.

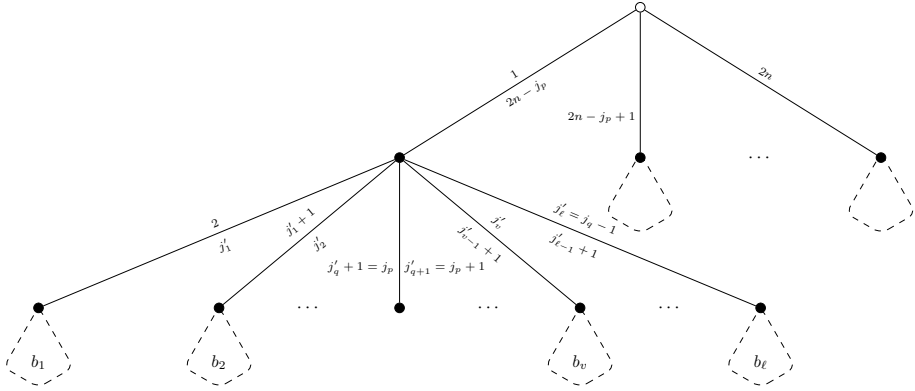
\begin{figure}[H]
\begin{center}
\tikzset{blob/.style={draw, dashed, kite, rounded corners, minimum size=1.45cm}} 
\begin{tikzpicture}[scale = 0.8, every node/.style={scale=0.8}]
    \node[] (tree1) at (0, 0) {
\begin{tikzpicture} [level distance=2.5cm,
level 1/.style={sibling distance=4cm},
level 2/.style={sibling distance=3cm},
level 3/.style={sibling distance=2cm}]
\tikzstyle{every node}=[circle, draw, scale=.8, inner sep=2pt]
\node (root) [circle, draw] {}
  child {node[fill] (1) {}
  	child{ node[fill] (11) {}
		child{ node[blob, shape border rotate=180, xshift=0cm, yshift=2cm] {\small $b_1$} } 
		edge from parent 
		node[sloped, ellipse, above, draw=none] {\tiny $2$}
		node[sloped, ellipse, below, draw=none]  {\tiny $j'_{1}$}
	} 
	child{ node[fill] (12) {}
		child{ node[blob, shape border rotate=180, xshift=0cm, yshift=2cm] {\small $b_2$} } 
		edge from parent 
		node[sloped, ellipse, above, draw=none] {\tiny $j'_{1} + 1$}
		node[sloped, ellipse, below, draw=none]  {\tiny $j'_{2}$}
	} 
	child{ node[fill] (13) {}
		edge from parent 
		node[right, draw=none, pos=0.75] {\tiny $j'_{q+1}=j_p+1$}
		node[left, draw=none, pos=0.75]  {\tiny $j'_q+1=j_p$}
	} 
	child{ node[fill] (14) {}
		child{ node[blob, shape border rotate=180, xshift=0cm, yshift=2cm] {\small $b_v$} } 
		edge from parent 
		node[sloped, ellipse, above, draw=none] {\tiny $j'_v$}
		node[sloped, ellipse, below, draw=none]  {\tiny $j'_{v-1} + 1$}
	} 
	child{ node[fill] (15) {}  
		child{ node[blob, shape border rotate=180, xshift=0cm, yshift=2cm] {\small $b_{\ell}$} } 
		edge from parent 
		node[sloped, ellipse, below, draw=none] {\tiny $j'_{\ell - 1} + 1$}
		node[sloped, ellipse, above, draw=none]  {\tiny $j'_{\ell} = j_q - 1$}
	} 
     	edge from parent 
	node[sloped, ellipse, above, draw=none] {\tiny $1$}
	node[sloped, ellipse, below, draw=none]  {\tiny $2n-j_p$}
	} 
  child {node[fill] (2) {}
    	child{ node[blob, shape border rotate=180, xshift=0cm, yshift=2cm] {\small $$} } 
     	edge from parent 
	node[left, draw=none, pos=0.75] {\tiny $2n-j_p+ 1$}
	node[right, draw=none, pos=0.75] {\tiny $$}}  
  child {node[fill] (k) {}
    	child{ node[blob, shape border rotate=180, xshift=0cm, yshift=2cm] {\small $$} } 
     	edge from parent 
	node[sloped, ellipse, above, draw=none] {\tiny $2n$}
	node[sloped, ellipse, below, draw=none]  {\tiny $$}} ;

\path (2) -- (k) node[draw=none, midway] () {$\cdots$}; 
\path (12) -- (13) node[draw=none, midway] () {$\cdots$}; 
\path (13) -- (14) node[draw=none, midway] () {$\cdots$}; 
\path (14) -- (15) node[draw=none, midway] () {$\cdots$}; 
\end{tikzpicture} 
    };

  \end{tikzpicture}
\end{center}
\caption{Problematic case \label{fig:problems}}
\end{figure}

Finally we show that all asymmetric plane trees are in the same connected component of $G^C$.  Suppose $T$ and $T'$ are both asymmetric plane trees with at least $4$ edges.  By the previous argument, we can assume that they each contain the edge $e(1,2n)$. Consider the subtrees $T_1$ and $T_1'$ consisting of all the edges of $T$ and respectively $T'$ except $e(1,2n)$.  The two subtrees are still asymmetric but have only $n-1$ edges.  By the inductive hypothesis we can transform $T_1$ into $T_1'$ with a sequence of $s_i^C$-local moves, which also transforms $T$ into $T'$.  By induction the claim is proven.
\end{proof}

The main result is a simple corollary of the previous results.

\begin{corollary}\label{corollary: two components}
The graph $G^C$ has exactly two connected components: one containing exactly the symmetric plane trees and the other containing exactly the asymmetric plane trees.
\end{corollary}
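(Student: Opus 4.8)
The plan is to assemble this corollary directly from the three preceding results, so the argument will be short and essentially bookkeeping. First I would invoke the Lemma stating that every connected component of $G^C$ is homogeneous: either all of its vertices are symmetric plane trees or all of them are asymmetric plane trees. Since by definition every plane tree is either symmetric or asymmetric and not both, this already partitions the vertex set of $G^C$ — and hence the set of its connected components — into a ``symmetric part'' and an ``asymmetric part.'' It then remains only to count how many components lie in each part.

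Next I would apply the first connectivity theorem: any two asymmetric plane trees on $n$ edges are joined by a finite sequence of $s_i^C$-local moves, hence lie in the same component. So there is \emph{at most} one component of asymmetric trees; together with the observation that asymmetric plane trees exist for $n \ge 3$ (for instance the two trees displayed in Figure~\ref{fig:basecase2}, and for larger $n$ an explicit asymmetric tree obtained by attaching a pendant edge asymmetrically at the root), there is \emph{exactly} one. The identical argument using the second connectivity theorem shows there is exactly one component consisting of symmetric plane trees (symmetric trees always exist, e.g.\ the star and the path of Figure~\ref{minmax}). Combining these, $G^C$ has exactly two connected components, one being precisely the set of symmetric plane trees and the other precisely the set of asymmetric plane trees.

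I do not expect a genuine obstacle: all the mathematical content is carried by the Lemma and the two theorems, and the only care needed is in the bookkeeping — confirming that ``symmetric'' and ``asymmetric'' really partition $\mathcal{T}_n$ and that each block is nonempty over the relevant range of $n$ (note that for $n=2$ every plane tree is symmetric, so the statement is to be read for $n \ge 3$). If one wished to be fully self-contained, the writeup could include a one-line construction of an asymmetric tree for each $n \ge 3$, but this is routine.
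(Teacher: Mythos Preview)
Your proposal is correct and follows essentially the same approach as the paper, which simply declares the corollary an immediate consequence of the preceding Lemma and two Theorems without further argument. If anything, you are more careful than the paper: you explicitly verify nonemptiness of each class and flag the edge case $n=2$, where every plane tree is symmetric and the ``exactly two components'' assertion fails as literally stated.
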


Appendix A gives larger examples of Corollary~\ref{corollary: two components} for $n = 5, 6, 7$.

We conclude with a formula for the size of each connected component in $G^C$, namely the number of symmetric plane trees and the number of asymmetric plane trees.

\begin{proposition}\label{proposition: count}
Given $\mathcal{T}_n$ the number of symmetric plane trees is $$r=\sum_{m}^{} \sum_{k_1 + k_2 + \cdots + k_{m+1} = \frac{n-m}{2}}^{} \prod_{j=1}^{m+1} C_{k_j}$$ where $m$ varies over odd numbers between $0$ and $n$ when $n$ is odd and over even numbers between $0$ and $n$ when $n$ is even.  The number of asymmetric plane trees is $C_n - r$.
\end{proposition}

\begin{proof}
We use the fact that the total number of plane trees with $n$ edges is the Catalan number $C_n = \frac{1}{n+1} \binom{2n}{n}$.

Define the {\em middle path graph} of a symmetric plane tree in $\mathcal{T}_n$ to be the maximal set of edges of the form $e(i,2n+1-i)$ for some $i$ with $1 \leq i \leq n$.  Let $m$ be the number of edges in the middle path graph of a symmetric plane tree.  To be a symmetric plane tree, any descendants to the left of a vertex in the middle path graph have their mirror image to the right of the same vertex.  Thus the set of all symmetric plane trees can be constructed by all possible ways to attach plane trees to the left of the middle path graph, together with the mirror images on the right.  There are $m+1$ vertices in the middle path graph; suppose that for each $i$ with $1 \leq i \leq m+1$ the $i^{th}$ vertex from the root in the middle path graph has a plane tree with $k_i$ edges to its left.  The sum $k_1+k_2+\cdots +k_{m+1}$ must satisfy
\[k_1+k_2+\cdots +k_{m+1} = \frac{n-m}{2}\]
since there are $n$ total edges in the tree, $m$ edges on the middle path graph, and another $k_1+k_2+\cdots +k_{m+1}$ edges in the mirror images of the subtrees to the left of the middle path graph.  By examining parity, we see that $m$ varies over odd numbers from $0$ to $n$ if $n$ is odd and over even numbers from $0$ to $n$ if $n$ is even.  For any such partition $k_1+k_2+\cdots +k_{m+1}$ we can independently take any of the $C_{k_i}$ plane trees on $k_i$ vertices to attach to the left of the $i^{th}$ vertex on the middle path graph, with its mirror image on the right.  Thus the total number of symmetric plane trees with $n$ edges is
\begin{align}
\sum_{m}^{} \sum_{k_1 + k_2 + \cdots + k_{m+1} = \frac{n-m}{2}}^{} \prod_{j=1}^{m+1} C_{k_j} \label{eq:symm}
\end{align}
as desired.  The number of asymmetric plane trees is simply the number of all plane trees minus the number of symmetric plane trees.
\end{proof}

\begin{question}
Do other properties of $G^A$ hold for the components of $G^C$ as well?  For instance is each component of $G^C$ graded by a function with a straightforward description?
\end{question}

\section{Remarks on classical types $B$ and $D$ and possible biological interpretations}

We conclude this paper with remarks and questions in two directions. First we discuss whether $s_i$-local moves could be reasonably extended to Weyl groups of other classical Lie types.  At the end we discuss speculative connections to biology.

\subsection{Extending $s_i$-local moves combinatorially to other classical Lie types}

There are two other Weyl groups of classical types, namely the Weyl groups of type $B$ and type $D$.  Both can be described as a subgroup of a sufficiently large permutation group.  

We think the Weyl group of type $D$ is unlikely to extend fruitfully to the setting of plane trees.  The problem is that the generators of the Weyl group of type $D$ cannot be written as a product of disjoint simple transpositions $(i,i+1)$.  Indeed, one generator must contain a transposition like $(n,n+2)$.  Within the permutation group, that transposition equals both 
\[(n,n+1)(n+1,n+2)(n,n+1) = (n+1,n+2)(n,n+1)(n+1,n+2).\]  
However the $s_i$-local moves do not form a group action; as we discussed in Remark \ref{remark: not group action} there is no consistent way to define $(n,n+2)$.

By contrast the Weyl group of type $B$ may lead to meaningful biological and combinatorial implications. 
The maps of type $B$ are the involutions defined by:
\begin{center}
$s_1^B=s_1s_{2n}$ corresponding to the reflection $(1,2)(2n,2n+1)$\\
$s_2^B=s_2s_{2n-1}$ corresponding to the reflection $(2,3)(2n-1,2n)$\\
\qquad \vdots \\
 $s_{n-1}^B=s_{n-1}s_{n+2}$ corresponding to the reflection $(n-1,n)(n+2,n+3)$\\
 $s_n^B = s_n$ corresponding to the reflection $(n, n+2)$.
\end{center}
Note that $s_n^B$ is different from the other permutations, much like $s_n^C$. (Also like the Weyl group of type $C$, we only have $s^{B}_{i}$ for $i \in \{1, 2, \ldots, n\}$.)  Though it is not a simple transposition, the fact that $n+1$ is fixed by all of the other generators $s_i^B$ means that we can define an unambiguous action on standard tableaux of shape $(n+1,n)$.  In this action, the map $s_n^B$ exchanges $n$ and $n+2$ and the other maps $s_i^B$ act as the corresponding product of type $A$ $s_i$-local moves.  

The type $B$ involutions do not act on plane trees since plane trees must have an even number of half-edges.  However they do act on objects like plane trees that have $n$ whole edges and an unpaired half-edge labeled $n+1$. This half-edge forms a small loop or bulge between the half-edges labeled $n$ and $n+2$. Figure~\ref{fig:B} gives an example.  As with the maps $s_i^B$ on tableaux, the action on these modified plane trees always fixes the bulge $n+1$.  

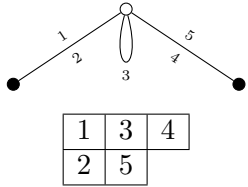
\begin{figure}[H]
$ \begin{gathered}
\begin{tikzpicture}[level distance=1cm,
level 1/.style={sibling distance=3cm},
level 2/.style={sibling distance=1cm}]
\tikzstyle{every node}=[circle, draw, scale=.8, inner sep=2pt]
\tikzstyle{every loop}=[-, shorten >=0pt, looseness=50, min distance=5mm]
\node[inner sep=2pt] (root) {}
    child {
    node[fill] {} 
    	edge from parent 
	node[sloped, ellipse, above, draw=none] {\tiny $1$}
	node[sloped, ellipse, below, draw=none]  {\tiny $2$}  
    }
    child {
    node[fill] {}
    	edge from parent 
	node[sloped, ellipse, below, draw=none] {\tiny $4$}
	node[sloped, ellipse, above, draw=none]  {\tiny $5$}  
    };
    \draw (root) edge [loop below] node[draw=none] {\tiny $3$} ();
\end{tikzpicture} \\
\begin{tabular}{ | c | c | c | }
  \hline
  $1$ & $3$ & 4 \\ \hline
  $2$ & $5$ & \multicolumn{1}{c}{} \\ \cline{1-2}
\end{tabular}
\end{gathered} $
\caption{Type-B model when $n=2$ \label{fig:B}}
\end{figure}

We leave these investigations for future research, for instance in the following questions.

\begin{question}
What are the orbits of the action of involutions $s_i^B$?  What is a natural collection of involutions to represent mutations on strands with several bulges (namely fixing several integers)?
\end{question}

\subsection{Speculative connections between Weyl groups of classical types and biology}
We extended local moves combinatorially from $S_n$ to other Weyl groups of classical types.  We end with speculative comments and questions about whether the maps we defined are observed in any biological contexts.

We begin with possible biological interpretations of type $C$ local moves.  The product of DNA transcription, messenger RNA (mRNA) carries genetic information contained in DNA from the cell nucleus to the cytoplasm, where protein synthesis takes place.  During the normal process of translation, a ribosome reads an mRNA strand from the 5' end of the base sequence to the 3' end, decoding three bases into one amino acid molecule at a time. Whereas type $A$ local moves act by twisting RNA strands at a particular location, we think of a type $C$ local move as exchanging two triples of base pairs at some point in the translation process, a development that may completely change the sequence of amino acids.

We conjecture that the type $C$ local moves may correspond to certain RNA mutations. When $i = n$ the map $s_i^C$ replaces stacked bases with their Watson-Crick complement; otherwise, the maps $s_i^C$ exchange adjacent sets of stacked bases while preserving their bonds.  Figure~\ref{fig:CRNA} illustrates an example of the twisting mechanism when $s_2^C$ is applied for $n=4$.  (Applying $s_4^C$ in this example would exchange $4$ and $5$ instead.) 

\begin{center}
\begin{figure}[H]
\begin{tikzpicture}
\node (L) at (0, 0) {
\begin{tikzpicture}[x=1cm, y=3cm]
    \def \steps{4}
    \def \width{0.75}
    \pgfmathsetmacro \stepsize{1/\steps}
    \draw [thick, dashed] (\width, \steps*\stepsize) arc[radius = 0.5cm, start angle= -40, end angle=220]; 
    \draw (0, -0.05) node {\LARGE $\vdots$}; 
    \draw (\width, -0.05) node {\LARGE $\vdots$}; 
    \foreach \i in {1, 2,..., \steps} {
        \draw [thick]
        (0, {(\i-1)*\stepsize}) -- (0, {\i*\stepsize}) 
        (\width, {(\i-1)*\stepsize}) -- (\width, {\i*\stepsize}); 
    }
    \foreach \i in {1, ..., \steps} {
	\pgfmathtruncatemacro \diff{\steps - \i + 1}
    	\draw (0, {\i*\stepsize}) node [circle, inner sep=1.75pt, fill] {}; 
	\draw (-0.3, {\i*\stepsize}) node [ellipse, inner sep=1.75pt] {\tiny $\i$}; 
    	
	\draw (\width, {\i*\stepsize}) node [circle, inner sep=1.75pt, fill] {}; 
	\pgfmathtruncatemacro{\var}{\steps+\diff}
	\draw (1.05, {\i*\stepsize}) node [ellipse, inner sep=1.75pt] {\tiny $\var$}; 
	
        \draw [dashed] (0, {\i*\stepsize}) edge (\width, {\i*\stepsize}); 
    }
\end{tikzpicture}
};

\node (R) at (5, 0) {
\begin{tikzpicture}[x=1cm, y=3cm]
    \def \steps{4}
    \def \width{0.75}
    \pgfmathsetmacro \stepsize{1/\steps}
    \draw [thick, dashed] (\width, \steps*\stepsize) arc[radius = 0.5cm, start angle= -40, end angle=220]; 
    \draw (0, -0.05) node {\LARGE $\vdots$}; 
    \draw (\width, -0.05) node {\LARGE $\vdots$}; 
    \foreach \i in {1, 2,..., \steps} {
        \draw [thick]
        (0, {(\i-1)*\stepsize}) -- (0, {\i*\stepsize}) 
        (\width, {(\i-1)*\stepsize}) -- (\width, {\i*\stepsize}); 
    }
    \foreach \i in {1, ..., \steps} {
    	\draw (0, {\i*\stepsize}) node [circle, inner sep=1.75pt, fill] {}; 
    	\draw (\width, {\i*\stepsize}) node [circle, inner sep=1.75pt, fill] {}; 
        \draw [dashed] (0, {\i*\stepsize}) edge (\width, {\i*\stepsize}); 
    } 
	\draw (-0.3, 1) node [ellipse, inner sep=2pt] {\tiny $4$}; 
	\draw (-0.3, 0.75) node [ellipse, inner sep=2pt] {\tiny $2$}; 
	\draw (-0.3, 0.5) node [ellipse, inner sep=2pt] {\tiny $3$}; 
	\draw (-0.3, 0.25) node [ellipse, inner sep=2pt] {\tiny $1$};
	
	\draw (1.05, 1) node [ellipse, inner sep=2pt] {\tiny $5$}; 
	\draw (1.05, 0.75) node [ellipse, inner sep=2pt] {\tiny $7$}; 
	\draw (1.05, 0.5) node [ellipse, inner sep=2pt] {\tiny $6$}; 
	\draw (1.05, 0.25) node [ellipse, inner sep=2pt] {\tiny $8$};
\end{tikzpicture}
};

\draw[shorten >=0.5cm, shorten <=0.5cm, -] (L)--(R) node [midway, above] {\tiny $s^{C}_{2} = s_2s_6$};
\end{tikzpicture}
\caption{The map $s_i^C$ for an element of the Weyl group of type $C$ acting on RNA base pairs \label{fig:CRNA}}
\end{figure}
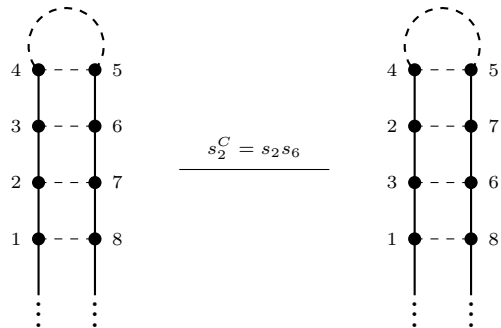
\end{center}

Like the Weyl group of type $C$, the elements of the Weyl group of type $B$ correspond to mutations on an RNA strand.  But the type-$B$ model is different because the stacked bases now contain a bulge, namely the sequence of unmatched nucleotides corresponding to the half-edge $n+1$. 

\begin{question} Are any processes like this observed biologically?
\end{question}



\clearpage
\appendix
\section{type $C$ Orbits for Higher-Order Plane Trees} \label{App:A}

Figures~\ref{fig:A5} to \ref{fig:A7} summarize the main results of Section~\ref{section: type C} by showing the type $C$ orbits for plane trees with $n = 5$, $6$, and $7$ edges, respectively. The edge labels indicate Weyl group permutations of type $C$.  Each graph has two connected components: one of symmetric plane trees and one of asymmetric plane trees. Note that the number of asymmetric plane trees becomes much greater than the number of symmetric plane trees as $n$ grows larger. Proposition~\ref{proposition: count} supplies the formulas that calculate the number of symmetric plane trees and the number of asymmetric plane trees given each $n$. The {\em Mathematica} notebook that generates these orbits is publicly available online at \cite{Wang15}.

\newcommand{\gscaleFIVE}{0.25}
\newlength\smallheightFIVE
\newlength\bigheightFIVE
\settoheight\smallheightFIVE{\includegraphics[scale = \gscaleFIVE]{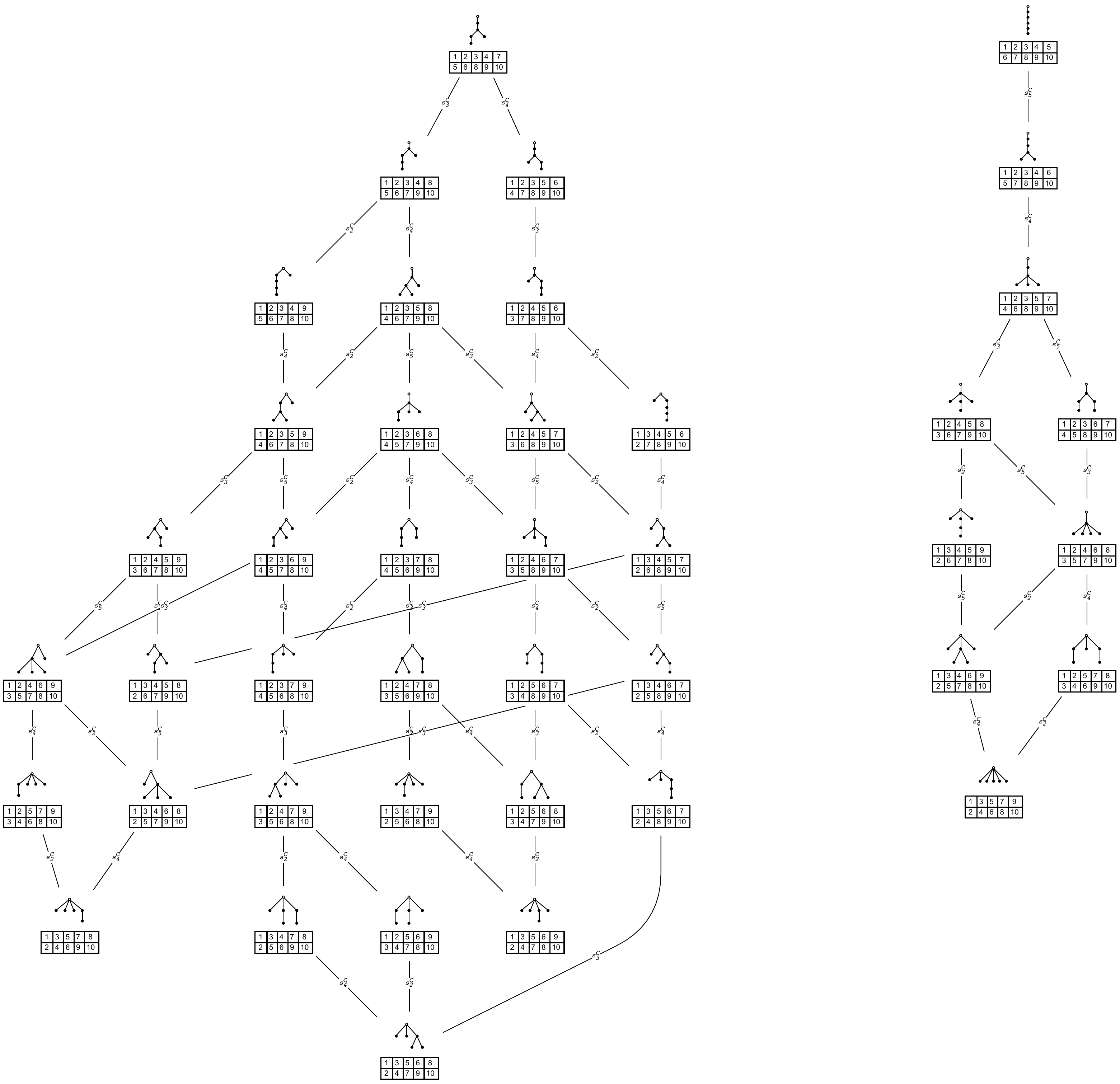}}
\settoheight\bigheightFIVE{\includegraphics[scale = \gscaleFIVE]{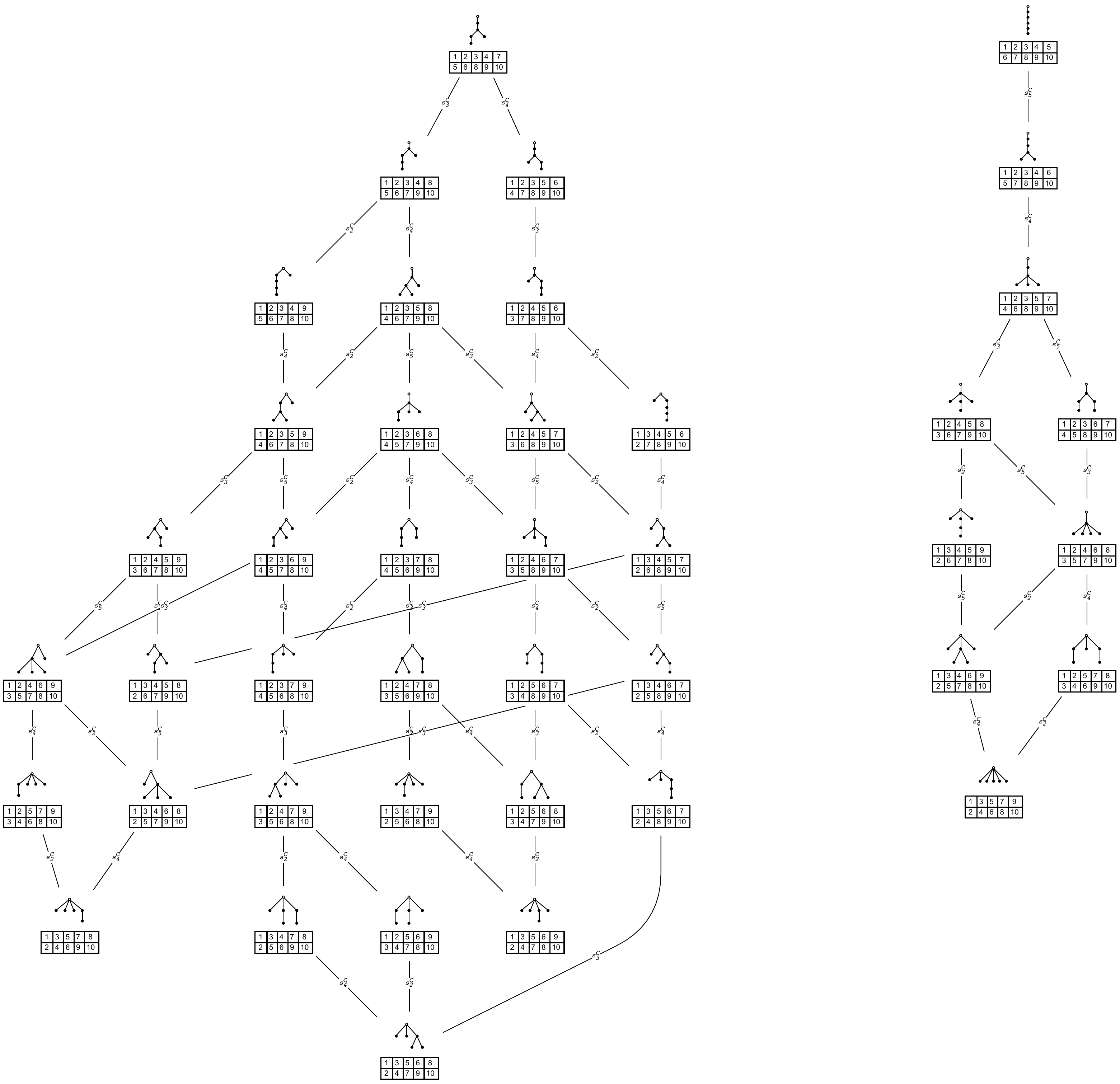}}
\pgfmathsetmacro \adjustheightFIVE{(\the\bigheightFIVE - \the\smallheightFIVE) / 2}

\begin{figure}[H]
\begin{center}
\begin{subfigure}[b]{0.25\textwidth}  
\centering
\raisebox{\adjustheightFIVE pt}{\includegraphics[scale = \gscaleFIVE]{C5symmetric.pdf}}
\caption{Symmetric orbit \label{fig:n5symm}}
\end{subfigure}
\begin{subfigure}[b]{0.7\textwidth}  
\centering
\includegraphics[scale = \gscaleFIVE]{C5asymmetric.pdf}
\caption{Asymmetric orbit \label{fig:n5asymm}} 
\end{subfigure}
\end{center}
\caption{Type $C$ local moves for plane trees with $n=5$ edges \label{fig:A5}}
\end{figure}

\newcommand{\gscaleSIX}{0.125}
\newlength\smallheightSIX
\newlength\bigheightSIX
\settoheight\smallheightSIX{\includegraphics[scale = \gscaleSIX]{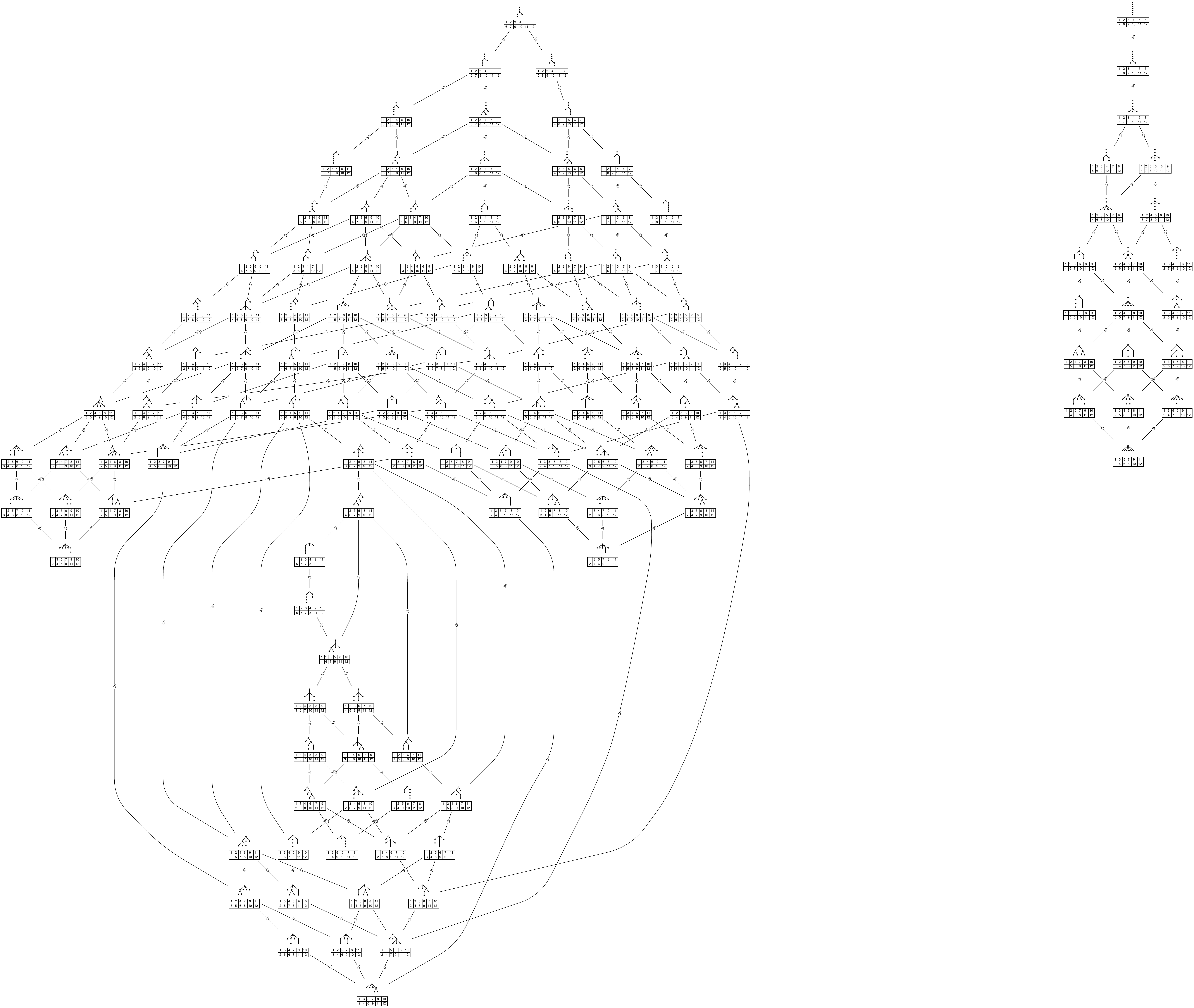}}
\settoheight\bigheightSIX{\includegraphics[scale = \gscaleSIX]{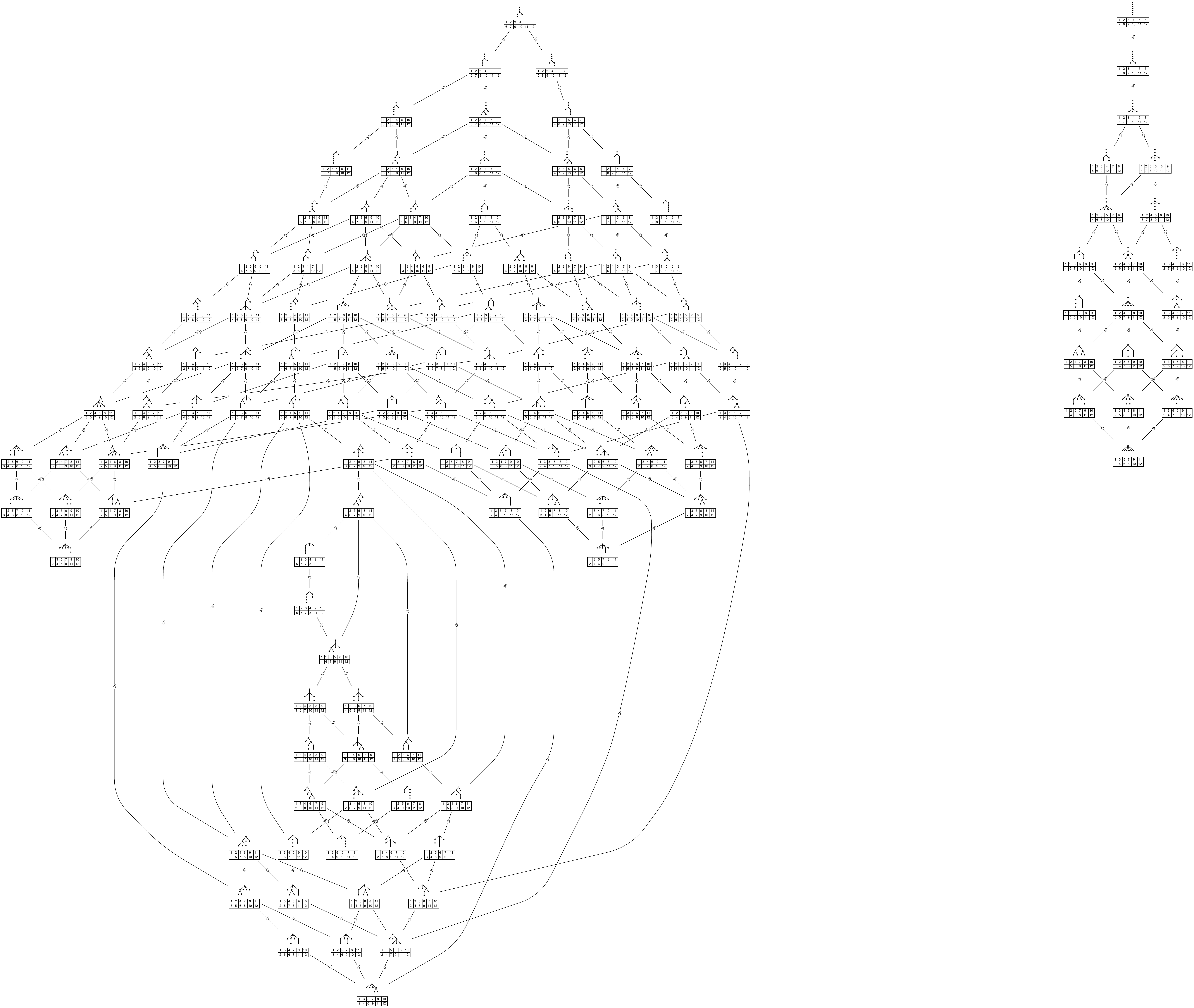}}
\pgfmathsetmacro \adjustheightSIX{(\the\bigheightSIX - \the\smallheightSIX) / 2}

\begin{figure}[H]
\begin{center}
\begin{subfigure}[b]{0.25\textwidth}  
\centering
\raisebox{\adjustheightSIX pt}{\includegraphics[scale = \gscaleSIX]{C6symmetric.pdf}}
\caption{Symmetric orbit \label{fig:n6symm}}
\end{subfigure}
\begin{subfigure}[b]{0.7\textwidth}  
\centering
\includegraphics[scale = \gscaleSIX]{C6asymmetric.pdf}
\caption{Asymmetric orbit \label{fig:n6asymm}} 
\end{subfigure}
\end{center}
\caption{Type $C$ local moves for plane trees with $n=6$ edges \label{fig:A6}}
\end{figure}

\newcommand{\gscaleSEVEN}{0.055}
\newlength\smallheightSEVEN
\newlength\bigheightSEVEN
\settoheight\smallheightSEVEN{\includegraphics[scale = \gscaleSEVEN]{C7symmetric.pdf}}
\settoheight\bigheightSEVEN{\includegraphics[scale = \gscaleSEVEN]{C7asymmetric.pdf}}
\pgfmathsetmacro \adjustheightSEVEN{(\the\bigheightSEVEN - \the\smallheightSEVEN) / 2}

\begin{figure}[H]
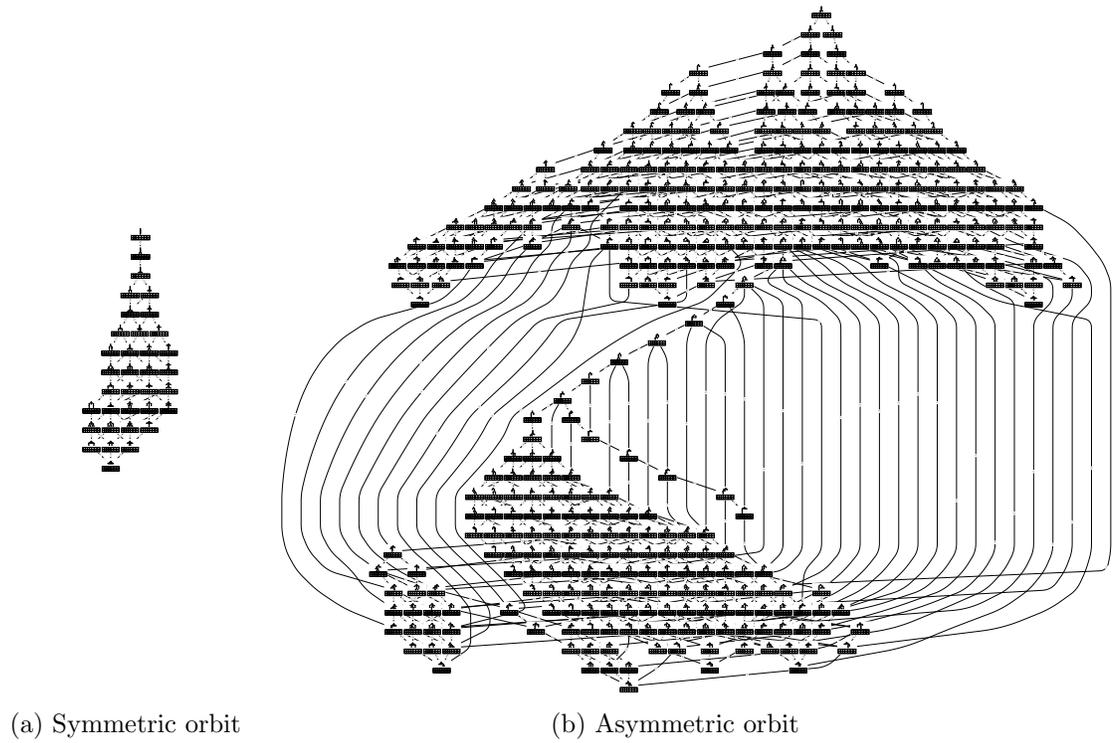

\begin{center}
\begin{subfigure}[b]{0.25\textwidth}  
\centering
\raisebox{\adjustheightSEVEN pt}{\includegraphics[scale = \gscaleSEVEN]{C7symmetric.pdf}}
\caption{Symmetric orbit \label{fig:n7symm}}
\end{subfigure}
\begin{subfigure}[b]{0.7\textwidth}  
\centering
\includegraphics[scale = \gscaleSEVEN]{C7asymmetric.pdf}
\caption{Asymmetric orbit \label{fig:n7asymm}} 
\end{subfigure}
\end{center}
\caption{Type $C$ local moves on plane trees with $n=7$ edges \label{fig:A7}}
\end{figure}


\clearpage

\end{document}